\bibliographystyle{plain}
\documentclass[12pt,oneside]{amsart}

\usepackage{hyperref}
\hypersetup{
    colorlinks=true,
    linkcolor=black,
    citecolor=magenta,
    filecolor=black,      
    urlcolor=blue,
    }

\usepackage{fullpage}
\usepackage{amssymb}
\usepackage{latexsym}
\usepackage{amsmath}
\usepackage{mathrsfs}
\usepackage[all]{xy}
\usepackage{enumerate}
\usepackage{enumitem}
\usepackage{amsthm}
\usepackage{psfrag}
\usepackage{ifthen}
\usepackage{mathdots}
\usepackage{caption}
\usepackage{subcaption}
\usepackage{standalone}
\usepackage{mathtools}
\usepackage{pgfplots}
\usepackage{comment}
\usepackage{xfrac}
\usepackage{todonotes}
\usepackage[utf8]{inputenc}
\usepackage[linesnumbered,ruled,vlined]{algorithm2e}
\usepackage[outline]{contour}
\usepackage{float}
\usepackage{tikz}
\usetikzlibrary{shapes.geometric}
\usepackage{graphicx}
\newcommand{\newbold}[1]{\bgroup\contourlength{0.01em}\contour{black}{#1}\egroup}

\newtheorem{theorem}{Theorem}[section]
\newtheorem{lemma}[theorem]{Lemma}
\newtheorem{metalemma}[theorem]{Meta Lemma}
\newtheorem{proposition}[theorem]{Proposition}
\newtheorem{corollary}[theorem]{Corollary}

\newtheorem*{theorem*}{Theorem}          

\theoremstyle{definition}
\newtheorem{definition}[theorem]{Definition}

\newtheorem{example}[theorem]{Example}


\usepackage{tabularx,environ}

\makeatletter
\newcommand{\problemtitle}[1]{\gdef\@problemtitle{#1}}
\newcommand{\probleminput}[1]{\gdef\@probleminput{#1}}
\newcommand{\problemquestion}[1]{\gdef\@problemquestion{#1}}
\NewEnviron{algproblem}{
  \problemtitle{}\probleminput{}\problemquestion{}
  \BODY
  \par\addvspace{.5\baselineskip}
  \noindent
  \begin{tabularx}{\textwidth}{@{\hspace{0em}} l X c}
    \hline\hline
    \multicolumn{2}{@{\hspace{0em}}l}{\@problemtitle} \\
    \textbf{In:} & \@probleminput \\%
    \textbf{Question:} & \@problemquestion\\
    \hline\hline
  \end{tabularx}
  \par\addvspace{.5\baselineskip}
}
\makeatother




\newcommand{\bel}[1]{\begin{equation}\label{#1}}
\newcommand{\ee}{\end{equation}}

\newcommand{\LBA}{\left\{\begin{array}}
\newcommand{\EAR}{\end{array}\right.}

\def\ovu{{\overline{u}}}
\def\ovv{{\overline{v}}}

\def\CG{{\mathcal G}}

\def\CP{{\mathcal P}}

\def\NP{{\mathbf{NP}}}
\def\DP{{\mathbf{DP}}}

\def\blf{{\protect\newbold{$f$}}}
\def\blg{{\protect\newbold{$g$}}}
\def\blo{{\protect\newbold{$0$}}}

\def\one{{\mathbf{1}}}
\def\SD{{\,\triangle\,}}

\newcommand{\gpr}[2]{{\left\langle #1 \mid #2 \right\rangle}}
\newcommand{\rb}[1]{{\left( #1 \right)}}

\newcommand{\Set}[2]{\left\{\, #1 \;\middle|\; #2 \,\right\}}

\def\ME{{\mathbb{E}}}
\def\MN{{\mathbb{N}}}
\def\MZ{{\mathbb{Z}}}

\DeclareMathOperator{\DIV}{{DIV}}
\DeclareMathOperator{\DIVw}{{DIV^{wit}}}

\DeclareMathOperator{\size}{{size}}
\DeclareMathOperator{\supp}{{supp}}

\DeclareMathOperator{\ord}{{ord}}
\DeclareMathOperator{\num}{{num}}
\DeclareMathOperator{\den}{{den}}
\DeclareMathOperator{\pres}{{pres}}
\DeclareMathOperator{\PWP}{{PWP}}

\DeclareMathOperator{\Path}{{path}}
\DeclareMathOperator{\flip}{{flip}}

\title{One variable equations over the lamplighter group}
\author{Alexander Ushakov and Yankun Wang}
\address{Department of Mathematical Sciences, Stevens Institute of Technology, Hoboken NJ 07030}\email{aushakov,ywang7@stevens.edu}

\date{\today}

\begin{document}

\maketitle

\begin{abstract}
We prove that one variable equations in the lamplighter group $\MZ_2\wr \MZ$
are decidable and describe an algorithm for solving such equations.
The algorithm has super-exponential time complexity in the worst case.
We also show that, for most equations, decidability can be determined 
in nearly quadratic time; that is, the problem admits a nearly quadratic-time 
solution in the generic case.
\\
\noindent
\textbf{Keywords.}
The lamplighter group, metabelian groups, wreath product,
one variable equations, Diophantine problem, complexity, decidability.

\noindent
\textbf{2010 Mathematics Subject Classification.} 
20F16, 20F10, 68W30.
\end{abstract}

\section{Introduction}

Let $F = F(Z)$ denote the free group on countably many generators 
$Z = \{z_i\}_{i=1}^\infty$. For a group $G$, an \emph{equation over $G$ with variables in $Z$} is an equation of the form $w = 1$, where $w \in F*G$. If $w = z_{i_1}g_1\cdots z_{i_k}g_k$, with $z_{i_j}\in Z$ and $g_j\in G$, then we refer to $\{z_{i_1},\ldots,z_{i_k}\}$ as the set of \emph{variables} and to $\{g_1,\ldots,g_k\}$ as the set of \emph{constants} (or \emph{coefficients}) of $w$.

A \emph{solution} to an equation $w(z_1,\ldots,z_k)=1$ over $G$ is a homomorphism $$\varphi\colon F*G\rightarrow G $$
such that $\varphi|_G = 1_G$ and $w \in \ker \varphi$. If $\varphi$ is a solution of $w =1$ and $g_i = \varphi(z_i)$, then we often say that $g_1,\ldots,g_k$ is a solution of $w =1$.

We assume that $G$ comes equipped with a fixed generating set $X$ 
and elements of $G$ are given as products of elements of $X$ and their inverses. 
This naturally defines the length (or size) of the equation $w =1$ as the length 
of its left-hand side $w $.

The \emph{Diophantine problem}, $\DP_C(G)$, over a group $G$ 
for a class of equations $C$ 
is an algorithmic question to decide whether a given equation $w =1$ in $C$ 
has a solution in $G$.
It can be regarded as a group-theoretic analogue of the satisfiability problem.
More generally, one can study the Diophantine problem for systems of
equations.
In this paper, we focus on the class of one-variable equations over 
the lamplighter group $\MZ_2\wr \MZ$.

We say that $w =1$ is a \emph{one-variable equation} if $w$ involves a single
variable, for convenience we assume it is $x$, one or more times
as $x$ or as $x^{-1}$.

\begin{algproblem}
\problemtitle{\textsc{Diophantine problem for one-variable equations over $G$} $(\DP_1(G))$.}
\probleminput{A group word $w =w(X,x) \in F(x)\ast G$, where $X$ is a generating set for $G$.}
\problemquestion{Is there a group word $w(X)$ such that $w(X,w)=1$ in $G$?}
\end{algproblem}

\subsection{Equations in the lamplighter and similar groups}

Computational properties of equations in the lamplighter group, 
in wreath products, and more generally 
in finitely generated metabelian groups have attracted 
considerable attention in recent years.

A. Miasnikov and N. Romanovsky \cite{Myasnikov-Romanovskii:2012} proved
that the Diophantine problem for systems of equations over $\MZ\wr \MZ$ 
is undecidable. 
R.~Dong \cite{Dong:2025} proved the same result using a different argument.
It is not known if the problem for single equations
over $\MZ\wr \MZ$ is decidable or not.
I.~Lysenok and A.~Ushakov proved that the Diophantine problem 
over free metabelian groups is $\NP$-complete for 
spherical equations \cite{Lysenok-Ushakov:2015}, and more generally, 
for orientable equations \cite{Lysenok-Ushakov:2021}.
The Diophantine problem 
for orientable quadratic equations over wreath products 
of finitely generated abelian groups is $\NP$-complete \cite{Ushakov-Weiers:2025}.
Also, the Diophantine problem for quadratic equations is $\NP$-complete
over metabelian Baumslag--Solitar groups \cite{Mandel-Ushakov:2023b}, and
the lamplighter group \cite{Ushakov-Weiers:2023}.
Existence of a finitely generated abelian-by-cyclic group with undecidable spherical equations was established in \cite{Dong:2025}.
It was claimed in \cite{Kharlampovich-Lopez-Miasnikov:2020} 
that the Diophantine problem for systems of equations over $\MZ_2 \wr \MZ$ 
is decidable. However, a gap was later discovered in the proof, 
and the status of the problem
for systems over $\MZ_2 \wr \MZ$ remains open.
A brief survey on the solvability of equations in wreath products of groups 
can be found in \cite{Bartholdi-Dong-Pernak-Waechter:2024}.

Applications of quadratic equations (particularly spherical equations) 
to cryptography have also been explored. In particular, 
\cite{Ushakov:2024} establishes connections
between computational group theory and lattice-based cryptography 
via spherical equations over a certain class of finite metabelian groups.

\subsection{Our contribution}

As mentioned above, it remains an open question whether 
the Diophantine problem for arbitrary single equations over the lamplighter group 
is decidable. However, decidability is established for certain classes of equations, 
such as quadratic ones. In this paper, we show that one-variable equations form
another such class: they are decidable, with worst-case complexity bounded by 
exponential time. Moreover, for a “typical” one-variable equation, 
existence of a solution can be decided in nearly quadratic time.

\subsection{Study of one-variable equations}

Obviously, one-variable equations can be solved efficiently in f.g. abelian groups.

Lyndon \cite{Lyndon:1960(2)} was the first to study 
one-variable equations over free groups. 
He characterized solution sets in terms of parametric words. 
The parametric words involved were simplified by Lorents 
\cite{Lorents:1963,Lorents:1968} and Appel \cite{Appel:1968}. 
However, Lorents announced his results without proof, 
and Appel’s published proof has a gap (see \cite{Chiswell-Remeslennikov:2000}). 
A complete proof has been provided by Chiswell and Remeslennikov. 
Solution sets of one-variable equations over free groups were studied via 
context-free languages in \cite{Gilman-Myasnikov:2004}, leading to a 
(high degree) polynomial-time algorithm of \cite{Bormotov-Gilman-Miasnikov:2008}.
R. Ferens and A. Je\.{z} describe a cubic time algorithm for this
problem in \cite{Ferens-Jez:2021}.

One-variable equations over nilpotent groups were studied by N. Repin 
in \cite{Repin:1984}.
He proved that for any \(c\ge 10^{20}\), there is no algorithm 
that can recognize the solvability of equations with one indeterminate 
in free nilpotent groups of class \(c\).
Yet, such an algorithm does exist 
for finitely generated nilpotent groups of class $2$,
also see \cite{Levine2022_virtually_class2_nilpotent} and \cite{DuchinLiangShapiro2015_equations_nilpotent_groups}.
Repin also constructs a finitely generated nilpotent group of class $3$
for which no algorithm exists to recognize the solvability of equations 
in one unknown. 


\subsection{Model of computation and data representation}
\label{se:model-computations}

We use a \emph{Random-Access Machine (RAM)} as a model of computation.
The RAM consists of an (in principle) unbounded collection of registers or memory cells, 
each capable of storing an integer (or, in certain variants, real numbers).
In general,
it is equipped with a fixed instruction set,
including arithmetic operations 
(addition, subtraction, multiplication, division), 
data movement (load and store), 
indirect addressing,
conditional and unconditional jumps, 
and a halt instruction. 
Each instruction is usually counted as 
a single computation step, 
which allows a natural definition of time complexity,
in the sense of combinational group theory.

The \emph{representation of data} within the RAM model plays a critical role in the analysis of algorithms. 
In particular,
integers are stored in registers, 
and Boolean values are represented by 
distinguished integers (e.g., $0$ for \texttt{false}, $1$ for \texttt{true}). 
Furthermore, the complex data types 
are encoded as sequences of integers occupying contiguous registers,
characters and strings may be stored via standard encodings such as ASCII or Unicode, 
arrays are represented by contiguous memory blocks with constant-time index access, 
and graphs or matrices can be encoded by adjacency structures or multi-dimensional arrays.
Such representations allow abstract data structures to be simulated within the RAM model 
while preserving a clear notion of storage cost and access time.

Different RAM variants change which operations are treated as unit-cost and thus alter the asymptotic analysis.
There are two common RAMs, the \emph{Word RAM} and the \emph{transdichotomous} models.
The \emph{Word RAM} assumes a word length \(w\) (possibly depending on the problem size \(n\)) so that
integers up to size about \(2^{w}\) fit in one word and basic arithmetic/bitwise operations
and word reads/writes are constant time.
Moreover, the transdichotomous viewpoint (word size tied to problem size)
is standard when deriving bounds for integer sorting, priority queues, and related structures.

\subsection{Outline}

In Section~\ref{se:Laurent} we review the definition of Laurent polynomials and examine the computational complexity of their basic arithmetic operations.
In Section~\ref{se:lamplighter} we review the definition of the lamplighter group.
In Section~\ref{se:equations-to-divisibility} 
we introduce \emph{$\delta$-parametric polynomials}
and prove that the Diohpantine problem $\DP_1(L_2)$
for one-variable equations over $L_2$ reduces in polynomial-time
to divisibility problem for $\delta$-parametric polynomials.
In Section~\ref{se:num-den} we discuss properties of the parametric 
polynomials that appear as we reduce equations to divisibility.
In Section~\ref{se:division-automaton} we introduce an
auxiliary tool called the \emph{division-by-$f$ automaton}.
In Section~\ref{se:divisibility-decidable}
we prove that the divisibility problem for $\delta$-parametric polynomials
is decidable.

\section{Preliminaries: Laurent polynomials}
\label{se:Laurent}


Let $R$ be a commutative ring with unity.
A \emph{Laurent polynomial} over $R$ is an expression of the form
$$
\sum_{i=-\infty}^\infty a_i z^i\ \ \ (a_i\in R),
$$
where all but finitely many \emph{coefficients} $a_i$ 
are trivial. The sum of two polynomials is defined by
$$
\sum_{i=-\infty}^\infty a_i z^i +
\sum_{i=-\infty}^\infty b_i z^i = 
\sum_{i=-\infty}^\infty (a_i + b_i) z^i
$$
and the product of two polynomials is defined by
$$
\rb{\sum_{i=-\infty}^\infty a_i z^i} \cdot
\rb{\sum_{i=-\infty}^\infty b_i z^i} = 
\sum_{i=-\infty}^\infty c_i z^i,
\mbox{ where } c_i=\sum_{j=-\infty}^\infty a_j b_{i-j}.
$$
The set of all Laurent polynomials, denoted $R[z^\pm]$,
equipped with $+$ and $\cdot$ defined above is a ring, see \cite{CoxLittleOSheaIVA}.
Throughout the paper the ring of coefficients $R$ is
the finite field $\MZ_2=\{0,1\}$ of order $2$.

We say that a Laurent polynomial is \emph{trivial} if all its coefficients are zero.
For a nontrivial $f(z) = \sum_{i=-\infty}^\infty a_i z^i$ define
\begin{itemize}
\item 
$\deg(f) = \max \Set{i}{a_i\ne 0}$ called the \emph{degree} of $f$,
\item 
$\ord(f) = \min \Set{i}{a_i\ne 0}$ called the \emph{order} of $f$.
\end{itemize}
The monomial $a_{\deg(f)} z^{\deg(f)}$ in $f$
is called the \emph{leading monomial}
and $a_{\ord(f)} z^{\ord(f)}$ is called the \emph{trailing monomial}.
For the trivial polynomial $f=0$,
$\ord(f)$ and $\deg(f)$ are not defined
and we denote this by writing $\deg(f)=\varnothing$ and $\ord(f)=\varnothing$.
Define 
$$
\size(f) =
\begin{cases}
\max(|\deg(f)|,|\ord(f)|) & \mbox{if } f\ne 0,\\
0 & \mbox{if } f=0.\\
\end{cases}
$$

\subsection{Division in $\MZ_2[z^\pm]$}

To divide $f(z)$ by $g(z)\ne 0$ means to find $q(z),r(z) \in\MZ_2[z^\pm]$
satisfying $f(z) = g(z)\cdot q(z) + r(z)$, where
\begin{equation}\label{eq:r}
r=0\ \ \mbox{ or }\ \ 
\ord(f)\le \ord(r)\le \deg(r) <
\ord(f)+[\deg(g)-\ord(g)].
\end{equation}
If $r=0$, then we say that $g$ \emph{divides} $f$ and use notation $g\mid f$.
Call $q(z)$ and $r(z)$ the \emph{quotient} and \emph{remainder} of division resp.
Denote by $g\ \%\ f$ the remainder of division of $f$ by $g$.
There is a close relationship between division in $\MZ_2[z^\pm]$
and in $\MZ_2[z]$. Indeed, for any $f,g,r\in\MZ_2[z^\pm]$, where $g\ne 0$, we have
\begin{align*}
r = f\ \%\ g\ \  \Leftrightarrow\ \ & 
f=gq+r \mbox{ for some }q\in \MZ_2[z^\pm] \mbox{ satisfying \eqref{eq:r}}\\
\ \  \Leftrightarrow\ \ & 
\underbrace{f\cdot z^{-\ord(f)}}_{F\in\MZ_2[z]} = 
\underbrace{g \cdot z^{-\ord(g)}}_{G\in\MZ_2[z]}
\cdot 
\underbrace{q \cdot z^{-\ord(q)}}_{Q\in\MZ_2[z]} + 
\underbrace{r\cdot z^{-\ord(f)}}_{R}\\
& \scalebox{0.8}{(satisfying $R=0$ or $0\le \ord(R),\ \ \deg(R)<\deg(G)$)} \\
\ \  \Leftrightarrow\ \ & 
R = F\ \%\ G \mbox{ in } \MZ_2[z].
\end{align*}
In particular, the following holds.

\begin{lemma}\label{Lemma:equivalence of Laurent division}
$g(z)$ divides $f(z)$ in $\MZ_2[z^\pm]$ $\ \ \Leftrightarrow\ \ $
$G(z)$ divides $F(z)$ in $\MZ_2[z]$.
\end{lemma}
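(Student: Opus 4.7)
The plan is to observe that $\MZ_2[z^\pm]$ is the localization of $\MZ_2[z]$ at $\{z^k : k\ge 0\}$, so $z$ is a unit in $\MZ_2[z^\pm]$ and multiplying by any integer power of $z$ preserves divisibility. This effectively recycles the chain of equivalences already displayed in the excerpt just above the statement; the only substantive content is careful bookkeeping of the $\ord$ function and the fact that $\MZ_2[z^\pm]$ is an integral domain (since $\MZ_2$ is a field), so $\ord$ is additive under multiplication of nonzero elements.

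For the direction $(\Rightarrow)$, I would assume $f = g q$ with $q\in\MZ_2[z^\pm]$ (nonzero, the trivial case being immediate). Additivity of $\ord$ gives $\ord(f) = \ord(g) + \ord(q)$, so setting $Q := q\cdot z^{-\ord(q)}\in\MZ_2[z]$ yields
$$
F \;=\; f\cdot z^{-\ord(f)} \;=\; \bigl(g\cdot z^{-\ord(g)}\bigr)\cdot\bigl(q\cdot z^{-\ord(q)}\bigr) \;=\; G\cdot Q,
$$
so $G\mid F$ in $\MZ_2[z]$.

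For the direction $(\Leftarrow)$, I would assume $F = G\cdot Q$ with $Q\in\MZ_2[z]$. Then
$$
f \;=\; F\cdot z^{\ord(f)} \;=\; G\cdot Q\cdot z^{\ord(f)} \;=\; g \cdot \bigl(Q\cdot z^{\ord(f)-\ord(g)}\bigr),
$$
and the right factor is a Laurent polynomial in $\MZ_2[z^\pm]$, witnessing $g\mid f$ there.

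The ``hard part'' is really no obstacle at all: it is merely the order bookkeeping and the observation that $z$ is a unit on the Laurent side. The lemma should be viewed as a formal extraction of the displayed equivalence chain, and no further ideas are needed beyond (i) $\MZ_2[z^\pm]$ being an integral domain so that orders add under products, and (ii) the explicit isomorphism $f\mapsto f\cdot z^{-\ord(f)}$ between nonzero Laurent polynomials and nonzero ordinary polynomials with nonzero constant term.
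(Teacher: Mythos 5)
Your proof is correct and follows essentially the same route as the paper: both rest on the transformation $f\mapsto f\cdot z^{-\ord(f)}$ that converts Laurent divisibility into ordinary polynomial divisibility, the paper obtaining the lemma as the $r=0$ specialization of its displayed remainder-correspondence chain while you argue the divisibility equivalence directly via additivity of $\ord$ over products in the integral domain $\MZ_2[z^\pm]$. That is a slight repackaging of the same observation, not a substantively different argument.
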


\subsection{Representation for Laurent polynomials}

We represent a nontrivial polynomial $f = \sum_{i=-\infty}^\infty a_i z^i$ 
by the pair $(\deg(f),w_f)$, where $\deg(f)$ is given in unary and
$w_f\in\{0,1\}^\ast$ is a bit string defined by
$$
w_f = 
\begin{cases}
a_{\ord(f)}\dots a_{\deg(f)} & \mbox{ if } f\ne 0,\\
\varepsilon  & \mbox{ if } f =  0,\\
\end{cases}
$$
and the trivial polynomial $f=0$ by the pair $(0,w_0)$.
Notice that so-defined value $\size(f)$ properly reflects 
the size of the representation for $f$ --
the number of bits required for $(\deg(f),w_f)$
is $\Theta(\size(f))$.
Notice that $w_f$ defined above starts and ends with $1$ if $w_f$ nonempty.
We call such representation \emph{strict}.

In some cases it is convenient to 
use a \emph{non-strict} representation for a polynomial $f$, defined 
by a pair $(D,w)$, where $\deg(f)\le D$, $w=a_d\dots a_D$, and
$d\le \ord(f)$. A non-strict representation allows $w$ to start and end with $0$.
We use notation $\pres(f)=(D,w)$ if $(D,w)$ defines the polynomial $f$.

\subsection{Complexity of operations in $\MZ_2[z^\pm]$}
\label{se:complexity-operations}

Let $R$ be any commutative ring with unity.
Addition, subtraction, multiplication, and division
for polynomials from $R[z]$ can be performed in linear or nearly linear time.
In more detail, the following holds.

\begin{theorem}[{{\cite[Section 2.2]{GathenGerhard2003}}}]
\label{th:plus}
Polynomials of degree less than $n$  can be added using at most $O(n)$
arithmetic operations in $R$.   
\end{theorem}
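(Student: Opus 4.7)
The plan is to give the standard coefficient-wise addition argument. I would represent each polynomial $f=\sum_{i=0}^{n-1} a_i z^i$ and $g=\sum_{i=0}^{n-1} b_i z^i$ as an array of $n$ coefficients indexed from $0$ to $n-1$ (padding with zeros in $R$ where a coefficient is absent). The sum $f+g$ then has coefficients $c_i = a_i+b_i$ for $i=0,\ldots,n-1$, directly from the definition of polynomial addition.

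The algorithm simply iterates $i$ from $0$ to $n-1$, computing $c_i := a_i+b_i$ and writing it to the output array. Each iteration performs exactly one addition in $R$ plus constant-time bookkeeping (index increment, memory read/write), so the total number of arithmetic operations in $R$ is exactly $n$, and the total number of RAM steps is $O(n)$.

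To wrap up, I would observe that subtraction is handled by the same loop with $c_i := a_i - b_i$, so the same $O(n)$ bound applies. The main thing to be careful about is the model of computation: I am counting each $+$ or $-$ in $R$ as a single arithmetic operation, which is the convention stated in Section~\ref{se:model-computations}, and I am assuming random access to the coefficient arrays so that reads and writes are unit cost. No genuine obstacle arises; the only real content is fixing a representation for which the straightforward loop has the claimed cost, which the array-of-coefficients encoding supplies.
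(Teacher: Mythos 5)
Your argument is correct and is exactly the standard coefficient-wise loop that the cited reference (von zur Gathen--Gerhard, Section~2.2) uses; the paper itself gives no proof, treating this as a quoted fact. Nothing to change.
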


\begin{theorem}[{{\cite[Theorem 8.23]{GathenGerhard2003}}}]
\label{th:times}
Polynomials of degree less than $n$ can be multiplied using at most 
$(18+72 \log_3 2)n\log n \log\log n + O(n\log n)$
or $63.43n\log n \log\log n + O(n\log n)$
arithmetic operations in $R$.
\end{theorem}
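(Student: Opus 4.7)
The plan is to use the Schönhage–Strassen multiplication algorithm, as presented in Chapter 8 of \cite{GathenGerhard2003}, tailored to work over an arbitrary commutative ring $R$ that need not contain the primitive roots of unity required for a standard FFT. The algorithm simulates a fast Fourier transform inside a synthetic extension of $R$.

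First, I would reduce multiplication of two polynomials of degree less than $n$ to the computation of a cyclic convolution of length $N \ge 2n - 1$. Padding the coefficient vectors with zeros turns them into length-$N$ vectors whose cyclic convolution coincides, on positions $0,\dots,2n-2$, with the coefficient sequence of the product. Since our intended ring of interest $R = \MZ_2$ has no nontrivial $2^k$-th root of unity (indeed $2$ is not even invertible, so the usual radix-$2$ FFT is unavailable), I would instead work inside an auxiliary quotient of $R[y]$ in which a specific element acts as a primitive $N$-th root of unity. For the three-primes variant underlying the base-$3$ logarithm appearing in the stated constant, the extension is modeled on the cyclotomic polynomial $y^{2k}+y^{k}+1$, with $N$ split as $N = km$ and $k,m$ chosen as comparable powers of $3$ near $\sqrt{N}$.

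The DFT and its inverse inside this extension ring consist of butterflies whose twiddle factors are monomials in $y$; these contribute $O(N\log N)$ additions and subtractions per level of recursion and no true ring multiplications in $R$. The $m$ pointwise products are instances of the same multiplication problem of size $\Theta(\sqrt n)$ and are handled recursively. Writing $T(n)$ for the operation count yields a recurrence of shape $T(n) \le m\,T(O(k)) + C n \log n$ with $km = \Theta(n)$, which unrolls over the $\Theta(\log\log n)$ levels into $T(n) = O(n\log n \log\log n)$.

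The main obstacle is not the asymptotic bound, which follows from the recurrence in a standard way, but the explicit constant $18 + 72\log_3 2$. Tracking this requires fixing the radix-$3$ split precisely, isolating the per-level DFT/inverse-DFT cost (the source of the $72 \log_3 2$ factor, obtained by combining the $\log_3$ recursion depth with the additive butterfly cost at each level) from the linear overhead of reductions modulo $y^{2k}+y^{k}+1$ (contributing the constant $18$), and folding the schoolbook base case into the $O(n\log n)$ lower-order term. I expect this bookkeeping of constants, rather than the structural argument, to be the genuinely delicate part of the proof.
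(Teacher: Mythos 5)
The paper does not prove this statement: it is quoted verbatim as a citation of Theorem~8.23 in von zur Gathen and Gerhard, \emph{Modern Computer Algebra}, and the paper relies on it as a black box (it only needs $M(n)\in O(n\log n\log\log n)$ in Proposition~\ref{pr:Laurent-complexity}). There is therefore no in-paper proof to compare your argument against.

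That said, your sketch is a reasonable reconstruction of the argument behind the citation: it is the Schönhage--Strassen scheme adapted to an arbitrary commutative ring $R$ by adjoining synthetic roots of unity, with the radix-$3$ variant (hence $\log_3 2$) needed precisely because in rings such as $\MZ_2$ the integer $2$ is not a unit and so the standard radix-$2$ FFT (and the final division by a power of $2$) is unavailable. The reduction to a cyclic or negacyclic convolution, the extension ring with monomial twiddle factors so that butterflies cost only additions, the $\Theta(\sqrt n)$ recursive pointwise products, and the recurrence $T(n)\le m\,T(O(k))+Cn\log n$ with $km=\Theta(n)$ giving the $\log\log n$ factor are all the right structural ingredients. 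You correctly identify that the only genuinely delicate part is the constant bookkeeping that produces $18+72\log_3 2\approx 63.43$, which is exactly the kind of per-level accounting carried out in the cited textbook; none of this is reproduced in the paper under review.
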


Denote by $M(n)$ the number of arithmetic operations in $R$
required to multiply two arbitrary polynomials in $R[x]$ 
of degree less than $n$.
By Theorem \ref{th:times}, $M(n)\in O(n\log n \log \log n )$.

\begin{theorem}[{{\cite[Section 9.11]{GathenGerhard2003}}}]
\label{th:div}
Division with remainder of a polynomial $f\in R[x]$
by a monic polynomial $g\in R[x]$
of degree less than $n$ can be done using 
$O(M(n))$ arithmetic operations in $R$.
\end{theorem}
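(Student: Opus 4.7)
The plan is to use the standard fast-division algorithm based on Newton iteration for power-series inversion. I may assume $\deg f < 2n$ by splitting $f$ into blocks of length $n$ (this costs only a constant factor), and set $m = \deg f - \deg g$. Introducing the reversal operator $\mathrm{rev}_k(p)(x) = x^k p(1/x)$, the identity $f = gq + r$ with $\deg r < \deg g$ transforms into
$$\mathrm{rev}_{m+n}(f) \equiv \mathrm{rev}_n(g) \cdot \mathrm{rev}_m(q) \pmod{x^{m+1}}.$$
Since $g$ is monic, $\mathrm{rev}_n(g)$ has constant term $1$ and is therefore a unit in $R[[x]]$. Hence
$$\mathrm{rev}_m(q) \equiv \mathrm{rev}_{m+n}(f) \cdot \mathrm{rev}_n(g)^{-1} \pmod{x^{m+1}},$$
and the task reduces to (a) inverting $\mathrm{rev}_n(g)$ modulo $x^{m+1}$, (b) one truncated multiplication to extract $\mathrm{rev}_m(q)$ and hence $q$, and (c) one further multiplication to recover $r = f - gq$.

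The central step (a) I would handle by Newton iteration. Set $h_0 = 1$ and define $h_{k+1} = 2h_k - h_k^2 \cdot \mathrm{rev}_n(g) \pmod{x^{2^{k+1}}}$. A short induction shows $h_k \cdot \mathrm{rev}_n(g) \equiv 1 \pmod{x^{2^k}}$, so after $\lceil \log_2(m+1)\rceil$ iterations the precision suffices. Each iteration consists of a constant number of polynomial multiplications of polynomials of length at most $2^{k+1}$, costing $O(M(2^k))$ arithmetic operations in $R$.

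The cost analysis then amounts to the geometric-series sum $\sum_{k} O(M(2^k)) = O(M(n))$, which uses the mild regularity hypothesis $2M(t) \le M(2t)$ satisfied by all classical multiplication bounds, including those invoked in Theorem~\ref{th:times}. Steps (b) and (c) each cost another $O(M(n))$ multiplication, so the overall bound is $O(M(n))$. The only delicate point is this regularity assumption on $M$ together with the careful truncation bookkeeping in the Newton step; everything else is routine manipulation of reversals and a standard correctness proof.
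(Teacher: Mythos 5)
The paper does not supply its own proof of this statement; it is imported verbatim by citation to von zur Gathen and Gerhard. Your sketch correctly reproduces the standard argument from that reference (Section 9.1 there): the reversal identity reducing division to truncated power-series inversion, Newton iteration $h_{k+1}=2h_k-h_k^2\,\mathrm{rev}_n(g)$ doubling the precision at each step, and the geometric-series cost analysis under the usual superlinearity hypothesis on $M$. The one point worth being slightly more careful about is the monicity hypothesis: it is what guarantees $\mathrm{rev}_n(g)$ has constant term $1$, hence is a unit in $R[[x]]$ for an \emph{arbitrary} commutative ring $R$ (not just a field), which is exactly the generality the theorem claims and which the rest of the paper later uses with $R=\MZ_2$. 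With that noted, your proof is correct and takes the same route as the cited source.
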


Addition, subtraction, multiplication, and division of Laurent polynomials 
in $\MZ_2[z,z^{-1}]$ can be reduced to the corresponding operations 
on ordinary polynomials in $\MZ_2[z]$.
Indeed, consider $f(z),g(z)\in \MZ_2[z^{\pm}]$. Define 
$$
o_f=\ord(f),\ o_g=\ord(g),\ \mbox{ and } m=-\min(o_f,o_g),
$$
and notice that in this notation
\begin{align}
\label{eq:plus}
f(z) \pm g(z) &= (f(z)z^{-m} \pm g(z)z^{-m})z^m\\
\label{eq:times}
f(z) \cdot g(z) &= (f(z)z^{-o_f} \cdot g(z)z^{-o_g})z^{o_f+o_g}\\
\label{eq:div}
g(z) \% f(z) &= (g(z)z^{-o_g}\ \%\ f(z)z^{-o_f})z^{o_f}.
\end{align}

\begin{proposition}\label{pr:Laurent-complexity}
Let $f(z),g(z)\in \MZ_2[z^{\pm}]$ and 
$
n = \max\{\size(f),\size(g)\}
$.
\begin{itemize}
\item[(a)]
There is an algorithm that computes $f(z)\pm g(z)$ in $O(n)$ time.
\item[(b)]
There is an algorithm that computes $f(z)\cdot g(z)$ in $O(n\log n\log \log n)$ time.
\item[(c)]
There is an algorithm that computes $g(z)\ \%\ f(g)$ in $O(n\log n\log \log n)$ time.
\item[(d)]
There is an algorithm that decides 
whether $g(z)\ne 0$ divides $f(z)$ in $O(n\log n\log \log n)$ time.
\end{itemize}
\end{proposition}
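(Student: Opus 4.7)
The plan is to derive each complexity bound by reducing the Laurent-polynomial operation to the corresponding operation on ordinary polynomials in $\MZ_2[z]$ via the identities \eqref{eq:plus}, \eqref{eq:times}, \eqref{eq:div}, and then invoking Theorems \ref{th:plus}, \ref{th:times}, and \ref{th:div}. Since the coefficient ring is $\MZ_2$, every arithmetic operation in $R$ is a single bit operation (XOR or AND), so on a Word RAM it is unit cost; therefore each ``arithmetic operations in $R$'' bound from Section \ref{se:complexity-operations} translates directly into a time bound.

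First I would handle the bookkeeping. If $\pres(f)=(\deg(f),w_f)$ with $w_f$ of length $\deg(f)-\ord(f)+1$, then $\size(f)=\max(|\deg(f)|,|\ord(f)|)$ and the total bit length of the representation is $\Theta(\size(f))$. The ``shift'' $f(z)\mapsto f(z)z^{-\ord(f)}$ amounts to reinterpreting the string $w_f$ as an element of $\MZ_2[z]$ of degree $\deg(f)-\ord(f)\le 2\size(f)=O(n)$, and the inverse shift $h(z)\mapsto h(z)z^{o}$ is obtained by recording a new degree tag $o+\deg(h)$; both operations cost $O(n)$ since the degree is stored in unary. Computing $o_f,o_g,m$ from the input also takes $O(n)$ time by a single scan.

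For part (a), use \eqref{eq:plus}: after shifting both inputs by $z^{-m}$, we add two polynomials in $\MZ_2[z]$ of degree $O(n)$, which by Theorem \ref{th:plus} costs $O(n)$ bit operations; shifting back costs $O(n)$. For part (b), use \eqref{eq:times}: we multiply two polynomials in $\MZ_2[z]$ of degree $O(n)$, which by Theorem \ref{th:times} costs $O(n\log n\log\log n)$ bit operations. For part (c), use \eqref{eq:div}: after shifting, the divisor $F=f(z)z^{-\ord(f)}\in\MZ_2[z]$ has constant term equal to its trailing coefficient, and its leading coefficient is $1$ because every nonzero element of $\MZ_2$ is $1$; hence $F$ is monic and Theorem \ref{th:div} applies, giving $O(M(n))=O(n\log n\log\log n)$ bit operations; shifting back by $z^{o_f}$ recovers $g(z)\,\%\,f(z)$ within the same time bound. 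Part (d) follows from (c) by computing the remainder and testing whether the returned string is empty (equivalently, $\pres(g\,\%\,f)=(0,\varepsilon)$), an $O(n)$ check.

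The only subtle point is verifying that the remainder produced in $\MZ_2[z]$ satisfies the degree condition \eqref{eq:r} once shifted back to $\MZ_2[z^{\pm}]$; this is precisely the content of Lemma \ref{Lemma:equivalence of Laurent division} applied to the pair $(F,G)$, so no separate argument is needed. Overall there is no real obstacle: the proof is essentially a careful accounting that (i) shifts are cheap under our representation, (ii) monicity is automatic over $\MZ_2$, and (iii) bit-level unit cost turns the arithmetic-operation bounds of Theorems \ref{th:plus}--\ref{th:div} into time bounds.
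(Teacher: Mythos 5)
Your proof is correct and takes essentially the same route as the paper: reduce each Laurent-polynomial operation to the corresponding operation in $\MZ_2[z]$ via the shift identities \eqref{eq:plus}, \eqref{eq:times}, \eqref{eq:div}, then invoke Theorems \ref{th:plus}, \ref{th:times}, \ref{th:div}. You spell out details the paper leaves implicit (that shifts are $O(n)$ bookkeeping under the chosen representation, that monicity of the divisor is automatic over $\MZ_2$, that part~(d) reduces to part~(c) plus an $O(n)$ zero-test), but these are elaborations of the same argument rather than a different one.
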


\begin{proof}
Theorems \ref{th:plus}, \ref{th:times}, and \ref{th:div} allow to compute
$$
f(z)z^{-m} \pm g(z)z^{-m},\ 
f(z)z^{-o_f} \cdot g(z)z^{-o_g},\ \mbox{ and }
g(z)z^{-o_g}\ \%\ f(z)z^{-o_f},
$$
in $O(n)$, $O(n\log n\log \log n)$, and
$O(n\log n\log \log n)$ time resp.
Multiplying the result by $z^m, z^{o_f+o_g}$, and $z^{o_f}$
we compute
\eqref{eq:plus}, \eqref{eq:times}, and \eqref{eq:div}.
\end{proof}

In Section \ref{se:divisibility-decidable}, we also employ 
the \emph{synthetic division} algorithm (see \cite[Section 2.5]{GathenGerhard2003}), 
whose complexity can be estimated as $O(n^2)$ arithmetic operations in $R$.




\section{Preliminaries: the lamplighter group}
\label{se:lamplighter}

Define the support of a function $f\colon \MZ\to \MZ_2$ as a set
$$
\supp(f) =\Set{i\in \MZ}{f(i)\ne 0}.
$$
Define a set 
$$
\MZ_2^\MZ  = \Set{f\colon \MZ\to \MZ_2}{|\supp(f)|<\infty}
$$
and a binary operation $+$ on $\MZ_2^\MZ$ which for
$f,g\in \MZ_2^\MZ$ produces $f+g\in \MZ_2^\MZ$ defined by
$$
(f+g)(z) = f(z)+g(z) \ \mbox{ for } \ z\in \MZ.
$$
For a nontrivial $f$ it will be convenient to define
$$
m(f)=\min_{z\in \supp(f)} x
\ \ \mbox{ and }\ \ 
M(f)=\max_{z\in \supp(f)} x.
$$
For $f\in \MZ_2^\MZ$ and $b\in \MZ$, define $f^b\in \MZ_2^\MZ$ by
$$
f^b(z)=f(z+b) \
\mbox{ for } \ z\in \MZ,
$$
which is a right $\MZ$-action on $\MZ_2^\MZ$ because $f^0=f$ and 
$f^{(b_1+b_2)}(z)=f(z+b_1+b_2)=(f^{b_1})^{b_2}(z)$.
Hence we can consider a semi-direct product $\MZ \ltimes \MZ_2^\MZ$
equipped with the operation
\begin{equation}\label{eq:semi-product}
(\delta_1,f_1)(\delta_2,f_2)=
(\delta_1+\delta_2,f_1^{\delta_2}+f_2).
\end{equation}
The group $\MZ \ltimes \MZ_2^\MZ$ is called the
\emph{restricted wreath product} of $\MZ_2$ and $\MZ$ and is denoted by
$\MZ_2\wr\MZ$. The group $\MZ_2\wr\MZ$ is also well-known as the \textit{lamplighter group}(\cite{Ushakov-Weiers:2023}), 
as it can be viewed as an infinite set of lamps (each lamp indexed by an element 
of $\MZ$), with each lamp either on or off, and a lamplighter positioned at some lamp.
Given some element $(\delta,f)\in \MZ_2\wr\MZ$, $f\in\MZ_2^\MZ$ represents the 
configuration of illuminated lamps and $\delta\in\MZ$ represents the position 
of the lamplighter. Henceforth, we denote $\MZ_2\wr\MZ$ as $L_2$.

There is a natural (abelian group) isomorphism between $\MZ_2^\MZ$ and the 
ring  $\MZ_2[z^\pm]$ of Laurent polynomials
with coefficients in $\MZ_2$ that maps 
$f\in \MZ_2^\MZ$ to $\sum_{i=-\infty}^\infty f(i)z^i$.
Hence, the group $L_2$ can be viewed 
as $\MZ\ltimes \MZ_2[z^\pm]$ with the $\MZ$-action on $\MZ_2[z^\pm]$
defined by
$$
f^\delta = f\cdot z^{-\delta}.
$$



It is easy to see that $L_2$ is a two-generated group.
In particular, it can be generated by the elements
$a=(0,\mathbf{1}_0)$ and $t = (1,0)$, where $\one_0 \in \MZ_2^\MZ$ is defined by
$$
\one_0(x)=
\begin{cases}
1& \mbox{ if } x=0\\
0& \mbox{ if } x\ne0\\
\end{cases}
$$
and $0$ in the second component of $t$ is the function identical to $0$.
G. Baumslag in \cite{Baumslag:1961} proved that $L_2$ is not finitely presented,
but can be defined using the following infinite presentation:
$$
L_2 \simeq \gpr{a,t}{a^2, [a^{t^i}, a^{t^j}](i,j \in \MZ)}.
$$

\begin{lemma}\label{le:word-to-pair}
It takes $O(|w|)$ time to compute the pair $(\delta,f) \in \MZ\ltimes \MZ_2[z^\pm]$
for a given group-word $w=w(a,t)$.
\end{lemma}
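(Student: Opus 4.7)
Write $w = x_1\cdots x_n$ with each $x_i \in \{a^{\pm 1}, t^{\pm 1}\}$, and observe that $a^{-1} = a$ in $L_2$. The algorithm I would use is a single left-to-right pass maintaining an integer counter $\sigma$ (the running $t$-exponent sum) and a bit array $B$ indexed by integers in the range $[-n, n]$. The underlying claim, proved by a short induction on $k$ from the semi-direct product rule \eqref{eq:semi-product}, is that the pair in $\MZ\ltimes\MZ_2[z^\pm]$ corresponding to the prefix $x_1\cdots x_k$ equals
$$
\Bigl(\sigma_k,\; \sum_{\substack{j\le k\\ x_j = a}} z^{\sigma_{j-1}-\sigma_k}\Bigr),
$$
where $\sigma_k$ denotes the signed count of $t$'s among $x_1,\ldots,x_k$. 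The inductive step for $x_{k+1}=a$ just adds the new term $z^{\sigma_k-\sigma_{k+1}} = z^0$ (since $\sigma_{k+1} = \sigma_k$), while for $x_{k+1} = t^{\pm 1}$ every existing exponent shifts by $\mp 1$ and simultaneously $\sigma_k$ changes by $\pm 1$, so the formal expression $\sigma_{j-1}-\sigma_k$ is preserved in value.

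Given the claim, the procedure is: initialize $B$ to the zero array and $\sigma$ to $0$ (in $O(n)$ time), then process the letters sequentially doing only $O(1)$ work per letter — update $\sigma$ by $\pm 1$ on each $t^{\pm 1}$, and toggle $B[\sigma] \leftarrow B[\sigma] \oplus 1$ on each $a$. At the end, set $\delta = \sigma$ and perform one $O(n)$ sweep over $B$ to locate its extremal nonzero entries, thereby producing the strict representation $(\deg(f), w_f)$ from Section~\ref{se:Laurent} of the polynomial $f = z^{-\delta}\sum_{k} B[k]\,z^k$.

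Every arithmetic operation is on integers of magnitude at most $n$, which fit in a single machine word under the RAM conventions of Section~\ref{se:model-computations}, so the total cost is $O(n) = O(|w|)$. There is no real obstacle beyond bookkeeping: the two-sided index range $[-n,n]$ for $B$ is simulated by an ordinary array of length $2n+1$ with $B[k]$ stored at physical index $k+n$, so both the bit toggles and the final sweep are genuinely constant- and linear-time, respectively.
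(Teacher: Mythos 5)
Your proof is correct, and it is a mirror image of the paper's. The paper processes $w$ from right to left using the semidirect-product rule directly, so that prepending $a$ contributes a monomial whose exponent is the $t$-shift already accumulated from the suffix, and prepending $t^{\pm1}$ only updates that shift; no per-step renormalization of the Laurent polynomial is ever needed. You process left to right and make the same economy by a different bookkeeping trick: you record each $a$'s monomial at the \emph{unshifted} exponent $\sigma_{j-1}$ and apply the single global renormalization $z^{-\delta}$ only at the end. The induction you sketch from the rule $(\delta_1,f_1)(\delta_2,f_2)=(\delta_1+\delta_2,f_1z^{-\delta_2}+f_2)$ does verify your closed form for a prefix (one small wording quibble: when $x_{k+1}=t^{\pm1}$ the expression $\sigma_{j-1}-\sigma_{k+1}$ is not \emph{preserved} --- it moves by $\mp1$, exactly in step with the actual shift $g_k\mapsto g_kz^{\mp1}$; what is preserved is the agreement between the formula and the actual pair). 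Both routes give $O(1)$ work per letter plus $O(n)$ for initialization and the final sweep, under the same RAM/word-size assumptions; the paper's choice of direction avoids even the final $z^{-\delta}$ normalization, while yours has the mild advantage of keeping the running state a plain prefix sum and a flat bit array. There is no gap.
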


\begin{proof}
One can process $w$ letter-by-letter from the right to the left using the formula
\eqref{eq:semi-product} with $(\delta_1,f_1) = (0,\mathbf{1}_0)^\pm$ or $(1,0)^\pm$
that can be handled using formulas
$$
(0,\mathbf{1}_0)^\pm \cdot (\delta_2,f_2) = (\delta_2,f_2+z^{\delta_2})
\ \ \mbox{ and }\ \ 
(1,0)^\pm \cdot (\delta_2,f_2) = (\delta_2\pm 1,f_2).
$$
This can be done in $O(|w|)$ time.
\end{proof}

Finally, we formulate the Diophantine problem for one-variable equations over $L_2$.

\begin{algproblem}
\problemtitle{\textsc{Diophantine problem for one-variable equations over $L_2$}, $(\DP_1(L_2))$.}
\probleminput{A group word $w=w(a,t,x)\in F(a,t,x)$.}
\problemquestion{Is there $s=s(a,t)\in F(a,t)$ satisfying $w(a,t,s(a,t))=1$ 
in $L_2$?}
\end{algproblem}

For a word $w=w(a,t,x)\in F(a,t,x)$ define
\begin{itemize}
\item 
$\sigma_a(w) = $ the sum of exponents for $a$ in $w$,
\item 
$\sigma_t(w) = $ the sum of exponents for $t$ in $w$,
\item 
$\sigma_x(w) = $ the sum of exponents for $x$ in $w$.
\end{itemize}
$\sigma_a$, $\sigma_t$, and $\sigma_x$
are computable in linear time $O(|w|)$ for a given $w=w(a,t,x)$.

\subsection{Useful formulae}

For any $(\delta,f),(\delta_1,f_1) \in \MZ_2\wr \MZ$
the following holds:
\begin{align*}
(\delta,f)^{-1} &= (-\delta,-f^{-\delta})
&& \mbox{(inverse)}\\
(\delta,f)^{-1} (\delta_1,f_1) (\delta,f) &= 
(\delta_1,(1-z^{-\delta_1})f+z^{-\delta}f_1)
&& \mbox{(conjugation)}\\
[(\delta, f), (\delta_1, f_1)] &= (0, -f - f_1^ {\delta} + f^{\delta_1} + f_1)
&& \mbox{(commutator)}\\
&= (0, -f(1-z^{-\delta_1}) + f_1(1-z^{-\delta}))\\
&= (0, f(1-z^{-\delta_1}) + f_1(1-z^{-\delta})).
\end{align*}

\section{One-variable equations: translation to parametric polynomials}
\label{se:equations-to-divisibility}

In this section we introduce \emph{$\delta$-parametric polynomials}
and prove that the Diohpantine problem $\DP_1(L_2)$
for one-variable equations over $L_2$ reduces in polynomial-time
to divisibility problem for $\delta$-parametric polynomials.

\begin{lemma}
It requires linear time to translate a word $w(a,t,x)$ into the form 
\begin{equation}\label{eq:one-var-eq}
c_0 x^{\varepsilon_1} c_1
\dots 
c_{k-1} x^{\varepsilon_k} c_k = 1,
\end{equation}
where $c_i=(\delta_i,f_i)\in \MZ \ltimes \MZ_2^\MZ$ are constants
and $\varepsilon_i=\pm 1$.
\end{lemma}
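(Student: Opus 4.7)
The plan is to scan the input word $w=w(a,t,x)$ once from left to right, locating every occurrence of the letters $x$ and $x^{-1}$. Say these occurrences (in order) are $x^{\varepsilon_1},\dots,x^{\varepsilon_k}$ with $\varepsilon_i \in \{\pm 1\}$. Between any two consecutive variable letters (and before the first / after the last) lies a maximal subword $u_i$ in the alphabet $\{a^{\pm 1},t^{\pm 1}\}$, so that $w$ factors syntactically as
$$
w = u_0\, x^{\varepsilon_1}\, u_1\, x^{\varepsilon_2}\, u_2\,\dots\, u_{k-1}\, x^{\varepsilon_k}\, u_k,
$$
where possibly some $u_i$ is the empty word (in which case we assign $c_i$ to be the identity pair $(0,0)$).

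Next, for each $u_i$ I would invoke Lemma~\ref{le:word-to-pair} to compute the corresponding pair $c_i=(\delta_i,f_i)\in \MZ\ltimes \MZ_2[z^\pm]$ in time $O(|u_i|)$. Concatenating these results produces the desired expression
$$
c_0\, x^{\varepsilon_1}\, c_1\, \dots\, c_{k-1}\, x^{\varepsilon_k}\, c_k = 1.
$$
Since every letter of $w$ is either one of the $x^{\varepsilon_i}$ (contributing $O(1)$) or belongs to exactly one $u_i$, we have $\sum_{i=0}^k |u_i| + k \le |w|$, so the total running time is $\sum_{i=0}^k O(|u_i|) + O(k) = O(|w|)$.

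There is no real obstacle here; the only thing to be careful about is the data representation of the constants $c_i$. Each $f_i$ is stored via the strict (or non-strict) representation described in Section~\ref{se:Laurent}, and its size is bounded by $|u_i|$, since Lemma~\ref{le:word-to-pair}'s inductive construction adds at most one monomial $z^{\delta_2}$ per letter of $a^{\pm 1}$ scanned and shifts $\delta$ by $\pm 1$ per letter of $t^{\pm 1}$. Thus the output of the translation has total bit-size $O(|w|)$, confirming that both the running time and the output size are linear in $|w|$.
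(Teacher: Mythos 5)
Your proposal matches the paper's proof: decompose $w$ into the alternating form $u_0\,x^{\varepsilon_1}u_1\cdots u_{k-1}x^{\varepsilon_k}u_k$ with $u_i\in F(a,t)$, apply Lemma~\ref{le:word-to-pair} to each $u_i$, and sum the costs. The paper says exactly this (with $w_i$ in place of your $u_i$); your added remarks on output size are a correct elaboration but not needed.
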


\begin{proof}
It is straightforward to express $w(a,t,x)$ as an alternating sequence 
$$
w_0(a,t) x^{\varepsilon_1} w_1(a,t)
\dots 
w_{k-1}(a,t) x^{\varepsilon_k} w_k(a,t)
$$
and then to apply Lemma \ref{le:word-to-pair} to each individual $w_i$.
\end{proof}

Hence, we may assume that the equation is given in the form \eqref{eq:one-var-eq}.
For $i=0,\dots,k$ define
\begin{align*}
\arraycolsep=3pt
\begin{array}{rcl}
\alpha_i
&=&\delta_{i+1}+\dots+\delta_{k}\\
\beta_i
&=&\varepsilon_{i+1}+\dots+\varepsilon_{k}
\end{array}
&&
\gamma_i =
\begin{cases}
\beta_i &\mbox{if } \varepsilon_i=1,\\
\beta_i-1 &\mbox{if } \varepsilon_i=-1.\\
\end{cases}
\end{align*}
With this notation, substituting $x=(\delta,f)$ into the left-hand 
side of \eqref{eq:one-var-eq} yields
\begin{align}\nonumber
(\delta_0,f_0)
(\delta,f)^{\varepsilon_1}
\dots
(\delta,f)^{\varepsilon_k}
(\delta_k,f_k) 
&= 
\Bigl(\sum_{i=0}^k \delta_i + \delta\sum_{i=1}^k \varepsilon_i,
\ \ 
\sum_{i=0}^k f_i^{\alpha_i+\beta_i\delta}+
\sum_{i=1}^k \varepsilon_i f^{\alpha_{i-1}+\gamma_i\delta}
\Bigr)\\
\nonumber
&= 
\Bigl(\sum_{i=0}^k \delta_i + \delta\sum_{i=1}^k \varepsilon_i,
\ \ 
\sum_{i=0}^k f_i z^{-\alpha_i-\beta_i\delta}+
f
\sum_{i=1}^k \varepsilon_i z^{-\alpha_{i-1}-\gamma_i\delta}
\Bigr)\\
\label{eq:rhs}
&= 
\Bigl(\underbrace{\sum_{i=0}^k \delta_i}_{t_w}
+ \delta\underbrace{\sum_{i=1}^k \varepsilon_i}_{x_w},
\ \ 
\underbrace{\sum_{i=0}^k f_i z^{-\alpha_i-\beta_i\delta}}_{\num(w)}+
f
\underbrace{\sum_{i=1}^k z^{-\alpha_{i-1}-\gamma_i\delta}}_{\den(w)}
\Bigr).
\end{align}
In the last formula the coefficients
$\varepsilon_i=\pm 1$ are omitted from the term for $\den$ 
because $-1=1$ in $\MZ_2$.
Denote by $t_w,x_w,\num(w),\den(w)$
the four components of the expression \eqref{eq:rhs}
\begin{align*}
t_w&=\sum_{i=0}^k \delta_i, &
x_w&=\sum_{i=0}^k \varepsilon_i, &
\num(w)&=\sum_{i=0}^k f_iz^{-\alpha_i-\beta_i\delta},&
\den(w)&=\sum_{i=1}^k z^{-\alpha_{i-1}-\gamma_i\delta}.
\end{align*}

\begin{lemma}
$\sigma_x(w) = x_w$ and $\sigma_t(w) = t_w$ for every $w=w(a,t,x)$.
\end{lemma}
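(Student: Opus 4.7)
The plan is to derive both identities by unpacking the two translation steps that produced \eqref{eq:one-var-eq} from $w(a,t,x)$, and then reading off the exponent sums directly. The key point is that the intermediate decomposition $w=w_0(a,t)\,x^{\varepsilon_1}\,w_1(a,t)\cdots x^{\varepsilon_k}\,w_k(a,t)$ is obtained by splitting $w$ at each occurrence of $x^{\pm 1}$, so it preserves exponents literally, and then each $w_i$ is converted to the pair $c_i=(\delta_i,f_i)$ by the letter-by-letter procedure of Lemma \ref{le:word-to-pair}.

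For the identity $\sigma_x(w)=x_w$: the $c_i$ are pure constants in $L_2$ coming from words $w_i(a,t)$ in which $x$ does not occur, so the only contributions to the exponent sum of $x$ in the product $c_0 x^{\varepsilon_1}c_1\cdots x^{\varepsilon_k}c_k$ are the exponents $\varepsilon_1,\ldots,\varepsilon_k$. Summing them yields $\sigma_x(w)=\varepsilon_1+\cdots+\varepsilon_k=x_w$.

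For the identity $\sigma_t(w)=t_w$, I would first establish the auxiliary fact that for every word $u=u(a,t)$, if Lemma \ref{le:word-to-pair} assigns to $u$ the pair $(\delta_u,f_u)\in\MZ\ltimes\MZ_2^\MZ$, then $\delta_u=\sigma_t(u)$. This is immediate from the recursive step used in the proof of that lemma: the update rules
\[
(0,\mathbf{1}_0)^{\pm 1}\cdot(\delta_2,f_2)=(\delta_2,\,f_2+z^{\delta_2}),\qquad (1,0)^{\pm 1}\cdot(\delta_2,f_2)=(\delta_2\pm 1,\,f_2)
\]
show that the first coordinate changes by $\pm 1$ exactly when an $a$-letter is replaced by a $t^{\pm 1}$-letter being processed, so a trivial induction on $|u|$ gives $\delta_u=\sigma_t(u)$. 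Applying this to each factor $w_i$ yields $\delta_i=\sigma_t(w_i)$, and since $x^{\pm 1}$ letters contribute $0$ to $\sigma_t$, we obtain
\[
\sigma_t(w)=\sum_{i=0}^k \sigma_t(w_i)=\sum_{i=0}^k \delta_i=t_w.
\]

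There is no real obstacle here; the only mildly non-trivial step is the auxiliary claim $\delta_u=\sigma_t(u)$, which is entirely routine once one inspects the formulas in Lemma \ref{le:word-to-pair}. The rest is bookkeeping.
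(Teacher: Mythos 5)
Your proof is correct and takes essentially the same route as the paper: the paper simply observes directly that $\sum\varepsilon_i$ and $\sum\delta_i$ are exponent sums of $x$ and $t$ in $w$, and your version merely spells out the bookkeeping, including the (routine but rightly noted) auxiliary fact $\delta_u = \sigma_t(u)$ from the update rules of Lemma~\ref{le:word-to-pair}.
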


\begin{proof}
The expressions $\sum_{i=1}^k \varepsilon_i$ and $\sum_{i=0}^k \delta_i$ 
count the sum of powers of $x/t$ in $w(a,t,x)$, i.e., it compute
$\sigma_x(w)$ and $\sigma_t(w)$ resp.
\end{proof}

\begin{lemma}\label{le:C1-C2}
$x=(\delta,f)$ satisfies  \eqref{eq:one-var-eq} 
$\ \Leftrightarrow\ $
the following holds for $\delta$ and $f$:
\begin{itemize}
\item[(C1)]
$\sum_{i=0}^k \delta_i + \delta\sum_{i=1}^k \varepsilon_i = 0$,
\item[(C2)]
$\sum_{i=0}^k f_i z^{-\alpha_i-\beta_i\delta}+
f\sum_{i=1}^k z^{-\alpha_{i-1}-\gamma_i\delta} = 0$.
\end{itemize}
\end{lemma}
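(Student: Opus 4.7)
The plan is to derive this equivalence directly from the explicit computation already recorded in \eqref{eq:rhs}. Under the identification $L_2 \simeq \MZ \ltimes \MZ_2[z^\pm]$, the identity element is $(0,0)$, and two elements of a semidirect product coincide iff their first and second components agree separately. So once we justify \eqref{eq:rhs}, the lemma reduces to reading off the vanishing of the two components of the left-hand side of \eqref{eq:one-var-eq} after substitution.

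Concretely, I would proceed as follows. First, I would fix $x=(\delta,f)$ and substitute into the left-hand side of \eqref{eq:one-var-eq}, obtaining the product
$$
(\delta_0,f_0)(\delta,f)^{\varepsilon_1}(\delta_1,f_1)\cdots(\delta,f)^{\varepsilon_k}(\delta_k,f_k).
$$
Using the inverse formula $(\delta,f)^{-1}=(-\delta,-f^{-\delta})$ from the Useful Formulae subsection, each factor $(\delta,f)^{\varepsilon_i}$ is a single element of $\MZ\ltimes\MZ_2[z^\pm]$. I would then apply the multiplication rule \eqref{eq:semi-product} iteratively from right to left, tracking how the polynomial components accumulate shifts under the $\MZ$-action $f\mapsto f\cdot z^{-\delta}$. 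The shift applied to $f_i$ after passing through the remaining factors is precisely $z^{-\alpha_i-\beta_i\delta}$, since $\alpha_i$ collects the $\delta_j$ for $j>i$ and $\beta_i\delta$ collects the contributions of the $\varepsilon_j\delta$ for $j>i$. Similarly, the shift applied to the $i$-th copy of $f^{\varepsilon_i}$ is $z^{-\alpha_{i-1}-\gamma_i\delta}$, with the $\gamma_i$ correction accounting for whether $\varepsilon_i=+1$ (no extra $\delta$ contributed by the factor itself) or $\varepsilon_i=-1$ (an extra $-\delta$ contributed). This reproduces \eqref{eq:rhs} exactly; a clean way to organize it is induction on $k$.

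Once \eqref{eq:rhs} is in hand, both directions of the lemma are immediate: the equation \eqref{eq:one-var-eq} asserts that the pair in \eqref{eq:rhs} equals $(0,0)$, which holds iff the first coordinate
$\sum_{i=0}^k \delta_i + \delta\sum_{i=1}^k \varepsilon_i$
vanishes (condition (C1)) and the second coordinate
$\sum_{i=0}^k f_i z^{-\alpha_i-\beta_i\delta}+f\sum_{i=1}^k z^{-\alpha_{i-1}-\gamma_i\delta}$
vanishes (condition (C2)); note that the signs $\varepsilon_i$ in the denominator sum are absorbed because $-1=1$ in $\MZ_2$, as already remarked after \eqref{eq:rhs}.

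The only real piece of work is the exponent bookkeeping in verifying \eqref{eq:rhs}, and this is where I would be most careful. The potential pitfall is an off-by-one error distinguishing $\alpha_i$ (indices strictly greater than $i$) from $\alpha_{i-1}$ (indices greater than or equal to $i$) in the two summations, and correctly handling the case $\varepsilon_i=-1$ where the factor $(\delta,f)^{-1}=(-\delta,-f^{-\delta})$ contributes its own $-\delta$ to later shifts, giving the $\gamma_i=\beta_i-1$ correction. Apart from this routine verification, no additional obstacle is expected.
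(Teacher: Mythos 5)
Your proposal is correct and takes essentially the same approach as the paper: both rely on the explicit computation recorded in \eqref{eq:rhs} and then observe that the equation holds iff that pair equals $(0,0)$, which splits into the two coordinate conditions (C1) and (C2). The paper simply treats \eqref{eq:rhs} as already established by the preceding display, whereas you also sketch how one would reverify it by iterating \eqref{eq:semi-product}; the substance is the same.
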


\begin{proof}
Indeed, $x=(\delta,f)$ satisfies \eqref{eq:one-var-eq} if and only if
\eqref{eq:rhs} evaluates to $(0,0)$,
which yields the conditions (C1) and (C2).
\end{proof}

Condition (C1) defines two cases for the equation $w(a,t,x)=1$:
$x_w \ne 0$ and $x_w = 0$.
In the first case, when $x_w \ne 0$, 
condition (C1) uniquely defines the value of $\delta$
\begin{equation}
\label{eq:delta}
\delta =-\frac{t_w}{x_w},
\end{equation}
and (C2) uniquely defines the polynomial $f$ 
\begin{equation}
\label{eq:f}
f= -\frac{\num(w)}{\den(w)}
=\frac{\num(w)}{\den(w)}.
\end{equation}
In this case the Diophantine problem for $w(a,t,x)=1$
is clearly  decidable in polynomial time,
see Proposition \ref{Prop:algorithm-equations} for more detail.

When $\sum_{i=1}^k \varepsilon_i = 0$, 
condition (C1), when consistent, does not define $\delta$
and $\delta$ can attain any integer value.
Thus, in this case, to decide if $w(a,t,x)=1$ has a solution
one needs to check if $\den_{\delta}(w)$ divides $\num_{\delta}(w)$
for some $\delta\in\MZ$, which is an instance of divisibility problem defined
below.
The expressions $\num_\delta,\den_\delta$ are Laurent polynomials 
in which the value of $\delta$ is undefined.
We call such expressions $\delta$-parametric polynomials.

\subsection{Parametric polynomials}

A \emph{$\delta$-parametric polynomial} is an expression of the form
\begin{equation}\label{eq:param-poly}
\sum_{i=s}^t f_i(z) z^{i\delta},
\end{equation}
where $f_s(z),\dots,f_t(z) \in \MZ_2[z^\pm]$ are fixed Laurent polynomials
and $\delta$ is a \emph{parameter} (a variable) that takes values in $\MZ$.
We use bold letters, e.g., $\blf$ of $\blg$, to denote $\delta$-parametric polynomials.
To \emph{instantiate} a $\delta$-parametric polynomial means to
replace the variable $\delta$ with a particular integer value.
The result of an instantiation of $\blf$ with a value $\delta\in\MZ$
is a Laurent polynomial denoted by $\blf_\delta$.

\begin{algproblem}
 \problemtitle{\textsc{Divisibility for $\delta$-parametric polynomials} $(\DIV(\blf,\blg))$.}
  \probleminput{$\delta$-parametric polynomials $\blf$ and $\blg$.}
  \problemquestion{Is there $\delta\in\MZ$ such that $\blf_\delta\mid \blg_\delta$?}
\end{algproblem}

$\delta\in\MZ$ satisfying $\blf_\delta\mid \blg_\delta$ is called
a \emph{witness} for the instance $(\blf,\blg)$ of $\DIV$.
$\DIV$ naturally splits in two cases: $\delta\ge 0$ and $\delta<0$.
We claim that both cases can be treated the same way as instances 
of the following problem.

\begin{algproblem}
 \problemtitle{\textsc{Positive divisibility for $\delta$-parametric polynomials} $(\DIV_+(\blf,\blg))$.}
  \probleminput{$\delta$-parametric polynomials $\blf$ and $\blg$.}
  \problemquestion{Is there $\delta\ge 0$ such that $\blf_\delta\mid \blg_\delta$?}
\end{algproblem}

$\delta\ge 0$ satisfying $\blf_\delta\mid \blg_\delta$ is called
a \emph{witness} for the instance $(\blf,\blg)$ of $\DIV_+$.
For a $\delta$-parametric polynomial $\blf=\sum_{i=s}^t f_i(z) z^{i\delta}$
define another $\delta$-parametric polynomial
$$
\flip(\blf) = \sum_{i=s}^t f_i(z) z^{-i\delta}
$$
in which every $\delta$ is replaced with $-\delta$.
Clearly, $[\flip(\blf)]_\delta = \blf_{-\delta}$ for every $\delta\in\MZ$.
Therefore,
\begin{align*}
\exists \delta < 0 \mbox{ s.t. } 
\blf_\delta \mid \blg_\delta
&\ \ \Leftrightarrow\ \ 
\exists \delta < 0 \mbox{ s.t. } 
[\flip(\blf)]_{-\delta} \mid [\flip(\blg)]_{-\delta}\\
&\ \ \Leftrightarrow\ \ 
\exists \delta > 0 \mbox{ s.t. } 
[\flip(\blf)]_{\delta} \mid [\flip(\blg)]_{\delta}.
\end{align*}
It will be convenient to assume that 
$\DIV(\blf,\blg))$ and $\DIV_+(\blf,\blg))$ define a true/false value
depending on whether $(\blf,\blg)$ are positive/negative instances 
of the problem. As a corollary we have the following proposition.

\begin{proposition}\label{pr:div-div2}
$\DIV(\blf,\blg) = \DIV_+(\blf,\blg) \vee \DIV_+(\flip(\blf),\flip(\blg))$.
\end{proposition}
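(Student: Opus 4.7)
The plan is to show the two sides of the claimed equality by a simple case split on the sign of a witness, using the observation already recorded that $[\flip(\blf)]_\delta = \blf_{-\delta}$ for every $\delta \in \MZ$. By definition, $\DIV(\blf,\blg)$ is true iff there exists some integer $\delta$ with $\blf_\delta \mid \blg_\delta$, and any such witness is either $\delta \ge 0$ or $\delta < 0$, so it suffices to match each of these two regimes with one of the two disjuncts on the right-hand side.

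For the non-negative regime the identification is immediate: a witness $\delta \ge 0$ for $\DIV(\blf,\blg)$ is by definition a witness for $\DIV_+(\blf,\blg)$, and conversely. For the negative regime I would use the substitution $\delta \mapsto -\delta$. Writing $\blf = \sum_{i=s}^{t} f_i(z) z^{i\delta}$ (and likewise for $\blg$), one checks directly from the definition of $\flip$ that instantiating $\flip(\blf)$ at any integer $\delta'$ yields the same Laurent polynomial as instantiating $\blf$ at $-\delta'$; this is the identity $[\flip(\blf)]_{\delta'} = \blf_{-\delta'}$ quoted before the statement. Therefore the existence of a witness $\delta < 0$ with $\blf_\delta \mid \blg_\delta$ is equivalent, via $\delta' = -\delta > 0$, to the existence of a positive witness for $\flip(\blf) \mid \flip(\blg)$, and in particular implies $\DIV_+(\flip(\blf), \flip(\blg))$. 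Conversely, any witness $\delta' \ge 0$ for $\DIV_+(\flip(\blf), \flip(\blg))$ gives $\delta = -\delta' \le 0$ satisfying $\blf_\delta \mid \blg_\delta$, and so is a witness for $\DIV(\blf,\blg)$.

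Combining the two cases yields the claimed equivalence. There is no genuine obstacle here: the only minor subtlety is the overlap at $\delta = 0$, which appears in both $\DIV_+(\blf,\blg)$ and $\DIV_+(\flip(\blf), \flip(\blg))$, but this is harmless because the right-hand side is a disjunction, so double-counting a witness does not affect the truth value. Consequently $\DIV(\blf,\blg)$ holds iff at least one of $\DIV_+(\blf,\blg)$ and $\DIV_+(\flip(\blf), \flip(\blg))$ holds, which is exactly the asserted identity.
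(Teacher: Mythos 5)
Your proof is correct and takes essentially the same route as the paper: split the quantifier over $\delta\in\MZ$ into the regimes $\delta\ge0$ and $\delta<0$, and use the identity $[\flip(\blf)]_{\delta}=\blf_{-\delta}$ to convert the negative regime into a positive instance for the flipped pair. The paper records this chain of equivalences in the paragraph immediately preceding the proposition and treats the proposition itself as an immediate corollary, so your handling of the $\delta=0$ overlap is a minor additional remark rather than a divergence.
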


Without loss of generality,
to discuss the divisibility problem for \emph{$\delta$-parameter polynomials},
we may assume that the parameter $\delta \geq 0$.

\begin{proposition}\label{prop:main-equivalence}
A one-variable-equation $w(a,t,x)=1$
satisfying $\sum_{i=1}^k \varepsilon_i = 0$
has a solution if and only if the following conditions are satisfied:
\begin{itemize}
\item 
$\sum_{i=0}^k \delta_i = 0$
\item 
$(\den(w),\num(w))$ is a positive instance of the divisibility problem.
\end{itemize}
\end{proposition}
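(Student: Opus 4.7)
The plan is to derive the proposition almost immediately from Lemma~\ref{le:C1-C2} by exploiting the hypothesis $\sum_{i=1}^k\varepsilon_i=0$ to kill the $\delta$-dependence in condition (C1), and then reinterpret condition (C2) as a divisibility problem for $\delta$-parametric polynomials.

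First I would invoke Lemma~\ref{le:C1-C2}: a pair $(\delta,f)\in\MZ\ltimes\MZ_2[z^\pm]$ is a solution of $w(a,t,x)=1$ if and only if (C1) and (C2) hold. Under the hypothesis $\sum_{i=1}^k\varepsilon_i=x_w=0$, condition (C1) collapses to $\sum_{i=0}^k\delta_i=0$, which no longer involves $\delta$. Thus (C1) either holds for every $\delta\in\MZ$ or for none, yielding the first bullet of the proposition; this handles the $(\Leftarrow)$ necessity of $\sum_{i=0}^k\delta_i=0$ and lets us assume it throughout the remainder of the argument.

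Next I would rewrite (C2). Since we work over $\MZ_2$ we have $-1=1$, so (C2) asserts the existence of $f\in\MZ_2[z^\pm]$ with
\[
f\cdot\den_\delta(w)=\num_\delta(w).
\]
Such an $f$ exists precisely when $\den_\delta(w)\mid\num_\delta(w)$ in $\MZ_2[z^\pm]$ (using the degenerate convention $0\mid 0$ for the edge case where $\den_\delta(w)$ vanishes identically, in which we additionally need $\num_\delta(w)=0$, but then any $f$ works). Consequently, the existence of a solution to $w(a,t,x)=1$ reduces to the existence of some $\delta\in\MZ$ (positive or negative) with $\den_\delta(w)\mid\num_\delta(w)$, which is exactly the statement that $(\den(w),\num(w))$ is a positive instance of $\DIV$. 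Proposition~\ref{pr:div-div2} further reduces this to $\DIV_+$ on either $(\den(w),\num(w))$ or its flip, so the phrase ``positive instance of the divisibility problem'' is well-defined and equivalent to the existence of a witness $\delta$.

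The main thing to be careful about is the degenerate case $\den_\delta(w)=0$: since the definition of division in Section~\ref{se:Laurent} requires a nonzero divisor, I would either adopt the convention $0\mid 0$ or note separately that if $\den_\delta(w)=0$ and $\num_\delta(w)=0$ simultaneously then any $f$ yields a solution, so this case is already captured as a ``yes'' instance. Everything else is a direct translation of definitions, so I expect the proof to be quite short, with the only real content being the observation that $x_w=0$ decouples (C1) from $\delta$ while (C2) becomes a parametric divisibility question in $\MZ_2[z^\pm]$.
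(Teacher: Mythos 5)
Your proof is correct and takes essentially the same approach as the paper, which simply observes that under $\sum_{i=1}^k\varepsilon_i=0$ conditions (C1) and (C2) of Lemma~\ref{le:C1-C2} translate directly into the two bullets. Your additional care about the degenerate case $\den_\delta(w)=0$ is a reasonable clarification that the paper elides.
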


\begin{proof}
When $\sum_{i=1}^k \varepsilon_i = 0$ conditions (C1) and (C2)
of Lemma \ref{le:C1-C2} translate to the given conditions, respectively.
\end{proof}

For a $\delta$-parametric polynomial $\blf = \sum_{i=s}^t f_i(z) z^{i\delta}$
(where $f_s\ne0$, $f_t\ne0$)
define its \emph{$\delta$-degree} and \emph{$\delta$-order} as
$$
\deg_\delta(\blf) = t\ \ \mbox{ and }\ \ \ord_\delta(\blf) = s.
$$

\section{Properties of $\num_{\delta}$ and $\den_{\delta}$}
\label{se:num-den}

\subsection{Inductive formulae for $\num$ and $\den$}

Consider an equation $w(a,t,x)=1$ and the \emph{compact form} 
of the expression \eqref{eq:rhs} for the word $w$
\begin{equation}\label{eq:rhs-short}
(\delta_0,f_0)
(\delta,f)^{\varepsilon_1}
\dots
(\delta,f)^{\varepsilon_k}
(\delta_k,f_k) 
\ =\ 
(t_w + \delta x_w,\ \num(w) + f \den(w)).
\end{equation}
The polynomials $\den(w)$ and $\num(w)$ can be efficiently constructed 
by reading the word $w$ \textbf{from the right to the left}
as described by the following lemmas.

\begin{lemma}\label{le:p-num-den0}
For the empty word $\varepsilon$ we have
$x_\varepsilon=t_\varepsilon=0$ and 
$\num(\varepsilon) = \den(\varepsilon) =0$.
\end{lemma}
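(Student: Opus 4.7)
The plan is to verify each of the four equalities directly from the definitions given in Section~\ref{se:equations-to-divisibility}, treating the empty word as the degenerate case $k=0$ of the standard form \eqref{eq:one-var-eq}. Since the empty word $\varepsilon$ equals $1_{L_2}=(0,0)$ in $L_2$, the corresponding constant factor is $c_0=(\delta_0,f_0)=(0,0)$, so $\delta_0=0$ and $f_0=0$. With $k=0$ there are no occurrences of $x$, so the alternating product contains no variable factors.

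First I would check $x_\varepsilon$ and $t_\varepsilon$. By the defining formulas $x_w=\sum_{i=1}^{k}\varepsilon_i$ and $t_w=\sum_{i=0}^{k}\delta_i$ applied with $k=0$: the sum for $x_\varepsilon$ is empty (the index range $1\le i\le 0$ is empty) and hence equals $0$, while the sum for $t_\varepsilon$ has the single term $\delta_0=0$, so $t_\varepsilon=0$.

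Next I would verify $\num(\varepsilon)$ and $\den(\varepsilon)$. For $\den(w)=\sum_{i=1}^{k}z^{-\alpha_{i-1}-\gamma_i\delta}$, the index range is again empty when $k=0$, so $\den(\varepsilon)=0$. For $\num(w)=\sum_{i=0}^{k}f_i z^{-\alpha_i-\beta_i\delta}$, the only surviving term is $i=0$, which evaluates to $f_0\,z^{-\alpha_0-\beta_0\delta}$; but $\alpha_0$ and $\beta_0$ are empty sums (ranging from $i=1$ to $k=0$), so they are both $0$, and $f_0=0$, giving $\num(\varepsilon)=0\cdot z^0=0$.

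There is no real obstacle here: the lemma is a pure base-case unpacking of the definitions, and the only point worth flagging explicitly is the convention that an empty sum (whether of integers, of exponents $\varepsilon_i$, or of monomials) evaluates to $0$, so that the formulas of Section~\ref{se:equations-to-divisibility} make sense at $k=0$ and are consistent with the trivial identity $(0,0)=(t_\varepsilon+\delta x_\varepsilon,\ \num(\varepsilon)+f\,\den(\varepsilon))$ predicted by the compact form \eqref{eq:rhs-short}.
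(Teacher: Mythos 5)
Your proof is correct; the paper omits a proof of this lemma precisely because it is the trivial base case of the inductive definitions, and your direct unpacking of the defining sums at $k=0$ (using the empty-sum convention and $c_0=(0,0)$) is exactly the verification the authors left implicit. The only cosmetic note is that in the displayed list after \eqref{eq:rhs} the paper misprints $x_w=\sum_{i=0}^k\varepsilon_i$ (it should start at $i=1$, as the underbrace in \eqref{eq:rhs} shows), and you silently used the correct indexing, which is fine.
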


\begin{lemma}\label{le:p-num-den1}
If $w'=x\circ w$, then 
$t_{w'}=t_w$, $x_{w'}=x_w+1$,
$\num(w')=\num(w)$, and
$\den(w')=\den(w) + z^{-t_w-x_w \delta}$.
\end{lemma}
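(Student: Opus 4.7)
The plan is to compute the substitution of $x=(\delta,f)$ into the left-hand side of $w'=x\circ w$ directly, using the semi-direct product multiplication, and then read off the four components by comparing to the compact form \eqref{eq:rhs-short} applied to $w'$. Since formula \eqref{eq:rhs-short} uniquely encodes the values $t_{w'},x_{w'},\num(w'),\den(w')$ (the first entry is linear in $\delta$ and the second entry is affine-linear in $f$), matching terms will yield all four identities simultaneously.

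Concretely, I would start from the inductive assumption that substituting $x=(\delta,f)$ into $w$ produces the element $(t_w+\delta x_w,\ \num(w)+f\,\den(w))$. Since $w'=x\circ w$, the substitution into $w'$ equals the product $(\delta,f)\cdot (t_w+\delta x_w,\ \num(w)+f\,\den(w))$. Applying \eqref{eq:semi-product} gives
\begin{align*}
&\bigl(\delta+t_w+\delta x_w,\ f^{\,t_w+\delta x_w}+\num(w)+f\,\den(w)\bigr)\\
&\qquad=\bigl(t_w+\delta(x_w+1),\ \num(w)+f\bigl(\den(w)+z^{-t_w-x_w\delta}\bigr)\bigr),
\end{align*}
where in the second step I use the $\MZ$-action $f^b=f\cdot z^{-b}$ to rewrite $f^{\,t_w+\delta x_w}=f\cdot z^{-t_w-x_w\delta}$ and then factor $f$ out. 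Comparing this with $(t_{w'}+\delta x_{w'},\ \num(w')+f\,\den(w'))$ yields exactly the four claimed equalities.

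There is essentially no obstacle here: the argument is a direct one-step calculation, and the only thing to be careful about is that the $\MZ$-action is applied correctly (i.e. $f^b=fz^{-b}$, not $fz^{b}$) and that the identification of $\num(w')$ and $\den(w')$ is legitimate, which follows because the compact form \eqref{eq:rhs-short} is a tautological identity that holds for every word and every choice of the unknown $(\delta,f)$, so the coefficients of $1$ and of $f$ in the second component must match.
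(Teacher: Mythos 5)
Your proof is correct and follows the same approach as the paper: multiply the compact form \eqref{eq:rhs-short} on the left by $(\delta,f)$ using \eqref{eq:semi-product}, apply $f^b = f z^{-b}$, and match components. The paper's proof is exactly this one-line computation; your extra remark about why coefficient matching identifies $\num$, $\den$ uniquely is a nice but inessential addition.
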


\begin{proof}
Multiplying the formula \eqref{eq:rhs-short} 
on the left by $x=(\delta,f)$ we get
$$
(\delta,f)\cdot(t_w + \delta x_w,\ \num(w) + f \den(w)) =
(\underbrace{t_w}_{t_{w'}} + \delta (\underbrace{x_w+1}_{x_{w'}}),\ 
\underbrace{\num(w)}_{\num(w')} + 
f(\underbrace{\den(w) +z^{-t_w - \delta x_w}}_{\den(w')}))
$$
which gives the claimed formulae.
\end{proof}

\begin{lemma}\label{le:p-num-den2}
If $w'= x^{-1} \circ w$, then 
$t_{w'} = t_w$, $x_{w'} = x_w -1$,
$\num(w')=\num(w)$, and
$\den(w')=\den(w) + z^{-t_w- (x_w-1) \delta}$.
\end{lemma}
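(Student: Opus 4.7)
The plan is to mimic the argument of Lemma \ref{le:p-num-den1}, but multiply the compact form \eqref{eq:rhs-short} on the left by $x^{-1}$ rather than by $x$. First I would use the ``inverse'' identity from the \emph{Useful formulae} section, together with the fact that coefficients lie in $\MZ_2$ (so $-1 = 1$), to write
$$
x^{-1} \;=\; (\delta,f)^{-1} \;=\; (-\delta,\ f^{-\delta}) \;=\; (-\delta,\ f \cdot z^{\delta}),
$$
where the last equality uses the convention $f^{\alpha} = f\cdot z^{-\alpha}$ from Section~\ref{se:lamplighter}. I would then apply the semidirect product rule \eqref{eq:semi-product} to the product $x^{-1}\cdot (t_w + \delta x_w,\ \num(w) + f\,\den(w))$.

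For the first component this gives $-\delta + t_w + \delta x_w = t_w + (x_w - 1)\delta$, which is exactly $t_{w'} + \delta x_{w'}$ with $t_{w'} = t_w$ and $x_{w'} = x_w - 1$. For the second component, using the action formula once more,
$$
(f\cdot z^\delta)^{\,t_w + \delta x_w} \;=\; f\cdot z^{\delta}\cdot z^{-(t_w + \delta x_w)} \;=\; f\cdot z^{-t_w - (x_w - 1)\delta},
$$
so the second component of $x^{-1}\cdot(t_w + \delta x_w,\ \num(w) + f\,\den(w))$ equals
$$
\num(w) \;+\; f\bigl(\den(w) + z^{-t_w - (x_w - 1)\delta}\bigr),
$$
which is of the form $\num(w') + f\,\den(w')$ with $\num(w') = \num(w)$ and $\den(w') = \den(w) + z^{-t_w - (x_w - 1)\delta}$, as claimed.

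There is really no conceptual obstacle; the computation is parallel to the proof of Lemma \ref{le:p-num-den1}. The only place to be careful is bookkeeping of exponents when propagating the $\MZ$-action through the semidirect product, in particular the sign flip coming from $x^{-1}$ and the cancellation $\delta - \delta x_w = -(x_w - 1)\delta$ that produces the exponent $-t_w - (x_w - 1)\delta$ appearing in $\den(w')$. Working in $\MZ_2$ also removes any $\pm$ sign ambiguity in the coefficient of the new monomial contributed to $\den$, so it lands as $+z^{-t_w - (x_w - 1)\delta}$ just as in Lemma \ref{le:p-num-den1}.
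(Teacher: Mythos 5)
Your proof is correct and follows essentially the same approach as the paper: both multiply the compact form \eqref{eq:rhs-short} on the left by $x^{-1}=(-\delta,-f^{-\delta})$ and read off the components. Your explicit unwinding of $f^{-\delta}=f\cdot z^{\delta}$ and the exponent arithmetic $\delta - t_w - \delta x_w = -t_w - (x_w-1)\delta$ just spells out the step the paper leaves implicit.
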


\begin{proof}
Multiplying the formula \eqref{eq:rhs-short} 
on the left by $x^{-1}=(-\delta,-f^{-\delta})$ we get
$$
(-\delta,-f^{-\delta})\cdot(t_w + \delta x_w,\ \num(w) + f \den(w)) =
(t_w + \delta (x_w- 1),\ \num(w) + f(\den(w) +z^{-t_w - \delta (x_w - 1)}))
$$
which gives the claimed formulae.
\end{proof}

\begin{lemma}\label{le:p-num-den3}
If $w'=t\circ w$, then 
$t_{w'}=t_w+1$, 
$x_{w'}=x_w$,
$\num(w')=\num(w)$, and
$\den(w')=\den(w)$.
\end{lemma}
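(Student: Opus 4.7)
The plan is to mirror the strategy used in the proofs of Lemmas \ref{le:p-num-den1} and \ref{le:p-num-den2}: start from the compact form \eqref{eq:rhs-short} for the word $w$, multiply on the left by the pair in $\MZ \ltimes \MZ_2[z^\pm]$ that represents $t$, and then simply read off the four quantities $t_{w'}, x_{w'}, \num(w'), \den(w')$ from the two components of the resulting product. Recall that $t = (1, 0)$, where the second component is the trivial Laurent polynomial.

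The only computation required is a single application of the semi-direct product formula \eqref{eq:semi-product}:
$$(1, 0) \cdot (t_w + \delta x_w,\ \num(w) + f\,\den(w)) = \bigl(1 + t_w + \delta x_w,\ 0^{\,t_w + \delta x_w} + \num(w) + f\,\den(w)\bigr).$$
Since the $\MZ$-action is multiplication by a power of $z$, we have $0^{\,t_w + \delta x_w} = 0$, so the second component simplifies to $\num(w) + f\,\den(w)$, while the first component picks up an additive $1$. Reading off the coefficient of $\delta$ and the constant term in the first slot yields $t_{w'} = t_w + 1$ and $x_{w'} = x_w$; reading off the terms that do not involve $f$ and those that do in the second slot yields $\num(w') = \num(w)$ and $\den(w') = \den(w)$.

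I do not anticipate any real obstacle here; in fact this case is strictly easier than the $w' = x^{\pm 1} \circ w$ cases handled in the previous two lemmas, precisely because the lamp configuration of $t$ is empty. Conceptually this matches the lamplighter picture: prepending $t$ on the left moves the lamplighter by one step without toggling any lamp, so the $\delta$-parametric polynomials $\num$ and $\den$ — which record the contributions to the lamp configuration from the constants and from the variable $x$, respectively — are left untouched.
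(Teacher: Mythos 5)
Your proof is correct and follows exactly the paper's own approach: multiply the compact form \eqref{eq:rhs-short} on the left by $t=(1,\blo)$ via the semi-direct product rule \eqref{eq:semi-product} and observe that the trivial lamp configuration of $t$ leaves both $\num$ and $\den$ unchanged while incrementing $t_w$. The supporting observation that $\blo^{\,t_w+\delta x_w}=0$ under the $\MZ$-action is the right (and only) detail to note.
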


\begin{proof}
Multiplying the formula \eqref{eq:rhs-short} 
on the left by $t=(1,\blo)$ we get
$$
(1,\blo)\cdot(t_w + \delta x_w,\ \num(w) + f \den(w)) =
(t_w+1 + \delta x_w,\ \num(w) + f\den(w))
$$
which gives the claimed formulae.
\end{proof}

\begin{lemma}\label{le:p-num-den4}
If $w'= t^{-1} \circ w$, then 
$t_{w'}=t_w-1$, 
$x_{w'}=x_w$,
$\num(w')=\num(w)$, and
$\den(w')=\den(w)$.
\end{lemma}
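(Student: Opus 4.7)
The plan is to proceed in exact parallel with the proof of Lemma \ref{le:p-num-den3}, replacing $t$ with $t^{-1}$. First I would compute $t^{-1}$ explicitly using the inverse formula from the "Useful formulae" subsection: since $t = (1,\blo)$, we get $t^{-1} = (-1, -\blo^{1}) = (-1,\blo)$, where the second component remains the zero function because it is fixed by the $\MZ$-action.

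Next I would left-multiply both sides of the compact form \eqref{eq:rhs-short} by $t^{-1} = (-1,\blo)$, applying the semi-direct product rule \eqref{eq:semi-product}:
$$
(-1,\blo)\cdot(t_w + \delta x_w,\ \num(w) + f \den(w)) =
(t_w - 1 + \delta x_w,\ \blo^{t_w + \delta x_w} + \num(w) + f \den(w)).
$$
Since $\blo$ is the zero function, the first summand in the second component vanishes, and the result simplifies to
$$
(t_w - 1 + \delta x_w,\ \num(w) + f \den(w)).
$$
Comparing this with the compact form for $w'$, namely $(t_{w'} + \delta x_{w'},\ \num(w') + f\den(w'))$, I can read off the four claimed identities: $t_{w'} = t_w - 1$, $x_{w'} = x_w$, $\num(w') = \num(w)$, and $\den(w') = \den(w)$.

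There is no real obstacle here; the only subtlety worth flagging is that the action $\blo^{t_w + \delta x_w}$ is trivial only because $\blo$ is the zero polynomial, which is precisely why multiplying by $t^{\pm 1}$ leaves both $\num$ and $\den$ untouched (in contrast to multiplying by $x^{\pm 1}$, which alters $\den$ by a monomial shift as in Lemmas \ref{le:p-num-den1} and \ref{le:p-num-den2}). The lemma is thus immediate once the inverse of $t$ is written down correctly.
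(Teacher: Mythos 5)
Your proof is correct and follows exactly the same approach as the paper: left-multiply the compact form \eqref{eq:rhs-short} by $t^{-1}=(-1,\blo)$ using \eqref{eq:semi-product} and read off the four identities. The only difference is that you spell out the intermediate step where $\blo^{t_w+\delta x_w}$ vanishes, which the paper leaves implicit.
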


\begin{proof}
Multiplying the formula \eqref{eq:rhs-short} 
on the left by $t^{-1}=(-1,\blo)$ we get
$$
(-1,\blo)\cdot(t_w + \delta x_w,\ \num(w) + f \den(w)) =
(t_w- 1 + \delta x_w,\ \num(w) + f\den(w))
$$
which gives the claimed formulae.
\end{proof}

\begin{lemma}\label{le:p-num-den5}
If $w' = a \circ w$, then 
$t_{w'}=t_w$,
$x_{w'}=x_w$,
$\num(w')=\num(w)+ z^{-t_w-x_w \delta}$, and
$\den(w')=\den(w)$.
\end{lemma}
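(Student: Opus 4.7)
The plan is to apply the same template used in Lemmas \ref{le:p-num-den1}--\ref{le:p-num-den4}: identify the generator $a$ as an element of $\MZ \ltimes \MZ_2[z^\pm]$, left-multiply the compact expression \eqref{eq:rhs-short} by it, and read off the four updated components from the result. Since $a = (0,\mathbf{1}_0)$ and $\mathbf{1}_0$ corresponds to the monomial $z^0 = 1$ under the isomorphism $\MZ_2^\MZ \cong \MZ_2[z^\pm]$, $a$ is represented as $(0,1)$ in the polynomial model.

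Next I would compute the product $(0,1)\cdot(t_w + \delta x_w,\ \num(w) + f\,\den(w))$ via the semi-direct product rule \eqref{eq:semi-product}. The first component is simply $0 + (t_w + \delta x_w)$, which matches the template $(t_{w'} + \delta x_{w'},\ \ldots)$ as a polynomial identity in $\delta$, and therefore forces $t_{w'} = t_w$ and $x_{w'} = x_w$. For the second component, the right $\MZ$-action $1^{\,\delta'} = z^{-\delta'}$ applied with $\delta' = t_w + \delta x_w$ produces the shifted monomial $z^{-t_w - x_w\delta}$, so the full second component reads $z^{-t_w - x_w\delta} + \num(w) + f\,\den(w)$.

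Finally, comparing this second component with the template $\num(w') + f\,\den(w')$ isolates the constant-in-$f$ part and the coefficient of $f$ respectively, yielding $\num(w') = \num(w) + z^{-t_w - x_w\delta}$ and $\den(w') = \den(w)$, which are exactly the asserted formulae.

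I expect essentially no obstacle: the computation is mechanical and parallels the proofs of the preceding four lemmas almost verbatim. The only point that merits care is the direction of the $\MZ$-action, which produces $z^{-(t_w + x_w\delta)}$ rather than $z^{t_w + x_w\delta}$; this is the same sign convention already used in Lemmas \ref{le:p-num-den1} and \ref{le:p-num-den2}, so consistency is immediate.
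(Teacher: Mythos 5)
Your proposal is correct and follows essentially the same route as the paper: left-multiply the compact form \eqref{eq:rhs-short} by $a=(0,\mathbf{1}_0)$ using the semi-direct product rule \eqref{eq:semi-product}, apply the $\MZ$-action to get the contribution $z^{-t_w-x_w\delta}$, and read off the four updated components by comparison with the template. The only difference is that you spell out the intermediate steps in more detail than the paper's one-line computation.
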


\begin{proof}
Multiplying the formula \eqref{eq:rhs-short} 
on the left by $a=(0,\one_0)$ we get
$$
(0,\one_0)\cdot (t_w + \delta x_w,\ \num(w) + f \den(w)) =
(t_w + \delta x_w,\ (\num(w) +z^{-t_w-x_w \delta}) + f \den(w)) 
$$
which gives the claimed formulae.
\end{proof}

\subsection{Magnus-type embedding}

Consider the free abelian group $\MZ^2$ defined as 
the set $\Set{t^ix^j}{i,j\in\MZ}$ equipped with the binary operation 
$\cdot$ defined by $t^ix^j \cdot t^kx^l=t^{i+k}x^{j+l}$.
Consider the group ring $\MZ_2[\MZ^2]$; it is the set of all 
finite linear combinations of elements $t^ix^j$ from $\MZ^2$
with coefficients from $\MZ_2$. 

By definition, a \emph{$\delta$-parametric polynomial} with coefficients from $\MZ_2$ 
can be written as
a sum of finitely many monomials $\sum_{i=1}^n z^{a_i+b_i \delta}$, where $a_i,b_i\in\MZ$.
In fact, the set of all \emph{$\delta$-parametric polynomials} forms a ring,
and there is a ring isomorphism 
from the set of all \emph{$\delta$-parametric polynomials}
to the group ring $\MZ_2[\MZ^2]$ defined by
$$
z^{a+b \delta} \mapsto t^{a} x^{b}.
$$
Hence, all parametric polynomials can be represented 
by elements of $\MZ_2[\MZ^2]$.
Let $M$ be the group of $3\times 3$ lower triangular matrices of the form
$$
\left[
\begin{array}{ccc}
p & 0 & 0 \\
\blf & 1 & 0 \\
\blg & 0 & 1 \\
\end{array}
\right]
$$
where $p \in \MZ^2$ and $\blf,\blg\in \MZ_2[\MZ^2]$.
Define a homomorphism $\alpha: F(a,t,x) \to M$ by the map
\[
x \mapsto
X = 
\begin{bmatrix}
x^{-1} & 0 & 0 \\
0 & 1 & 0 \\
1 & 0 & 1 
\end{bmatrix},
\ \ \ \ 
t \mapsto
T =
\begin{bmatrix}
t^{-1} & 0 & 0 \\
0 & 1 & 0 \\
0 & 0 & 1 \\
\end{bmatrix},
\ \ \ \ 
a \mapsto
A = 
\begin{bmatrix}
1 & 0 & 0 \\
1 & 1 & 0 \\
0 & 0 & 1 
\end{bmatrix}
\]

\begin{proposition}\label{pr:num-den-matrices}
$\alpha(w)=
\left[
\begin{array}{ccc}
x^{-x_w}t^{-t_w} & 0 & 0 \\
\num(w) & 1 & 0 \\
\den(w) & 0 & 1 \\
\end{array}
\right]
$ for every $w\in F(a,t,x)$.
\end{proposition}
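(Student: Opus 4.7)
The plan is to prove this by induction on the length of the word $w$. At each step I match the left-multiplication $\alpha(y)\cdot\alpha(w)$, for a letter $y$ prepended to $w$, against the recursive updates for $t_w$, $x_w$, $\num(w)$, and $\den(w)$ provided by Lemmas \ref{le:p-num-den0}--\ref{le:p-num-den5}. Because $F(a,t,x)$ is free and $\alpha$ is prescribed on the generators, no relations need to be verified for $\alpha$ to be a well-defined homomorphism, so the claim is a purely algebraic identity about matrix products.

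For the base case $w = \varepsilon$, $\alpha(\varepsilon) = I$, while Lemma \ref{le:p-num-den0} gives $t_\varepsilon = x_\varepsilon = 0$ and $\num(\varepsilon) = \den(\varepsilon) = 0$, so both sides agree. For the inductive step I assume the claim for $w$ and consider $w' = y \circ w$ for each $y \in \{x^{\pm 1}, t^{\pm 1}, a^{\pm 1}\}$. The bridge between the group-ring notation carried by the matrix entries and the $\delta$-parametric notation used in the lemmas is the ring isomorphism $z^{a + b\delta} \leftrightarrow t^a x^b$ recalled just before the statement; in particular the monomial $z^{-t_w - x_w\delta}$ appearing in Lemmas \ref{le:p-num-den1}, \ref{le:p-num-den2}, and \ref{le:p-num-den5} matches $x^{-x_w}t^{-t_w}$, which is exactly the $(1,1)$-entry of $\alpha(w)$ by the inductive hypothesis.

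Each case is then a short calculation. For $y = x$, computing $X \cdot \alpha(w)$ yields $(1,1)$-entry $x^{-(x_w+1)}t^{-t_w}$, $(2,1)$-entry $\num(w)$, and $(3,1)$-entry $x^{-x_w}t^{-t_w} + \den(w)$, aligning with Lemma \ref{le:p-num-den1}. For $y = x^{-1}$ I compute $X^{-1}$ (over $\MZ_2$ its only nontrivial entries are $x$ in the $(1,1)$- and $(3,1)$-positions), which produces $(3,1)$-entry $x^{-(x_w - 1)}t^{-t_w} + \den(w)$, matching Lemma \ref{le:p-num-den2}. The cases $y = t^{\pm 1}$ only rescale the $(1,1)$-entry by $t^{\mp 1}$ and leave the other two entries of the first column unchanged, matching Lemmas \ref{le:p-num-den3}--\ref{le:p-num-den4}. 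The case $y = a$ is handled by $A$, whose only nontrivial off-diagonal entry is a $1$ in the $(2,1)$-slot, so left-multiplication adds $x^{-x_w}t^{-t_w}$ to the $\num(w)$-entry, matching Lemma \ref{le:p-num-den5}. Finally, $y = a^{-1}$ reduces to the previous case once I check that $A^2 = I$ over $\MZ_2$ (the $(2,1)$-entry of $A^2$ is $1 + 1 = 0$), so $A^{-1} = A$; this is consistent with $a^2 = 1$ in $L_2$.

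There is no genuine obstacle: the whole proof is a disciplined case analysis. The only points deserving attention are the consistent translation $z^{a+b\delta} \leftrightarrow t^a x^b$ between the two presentations of parametric polynomials, and the slightly asymmetric exponent $-(x_w - 1)$ arising in the $y = x^{-1}$ case, which is automatically produced by left-multiplication by $X^{-1}$ (scaling the first column by $x$ rather than $x^{-1}$) instead of by $X$.
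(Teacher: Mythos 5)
Your proof is correct and follows essentially the same route as the paper's: induction on word length with a case analysis on the prepended letter, matching each left multiplication against the recursive formulae of Lemmas \ref{le:p-num-den0}--\ref{le:p-num-den5}. You are slightly more thorough in one spot: the paper's displayed computation omits the case $w'=a^{-1}\circ w$, which you dispatch by observing $A^{2}=I$ over $\MZ_2$, so $A^{-1}=A$.
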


\begin{proof}
Induction on $|w|$.
If $|w| = 0$, then $\alpha(w)=I$ which agrees with Lemma \ref{le:p-num-den0}.
Then using the induction assumption, we deduce the following:
{\allowdisplaybreaks
\begin{align*}
\alpha(x\circ w)
&=
\begin{bmatrix}
x^{-1} & 0 & 0 \\
0 & 1 & 0 \\
1 & 0 & 1 
\end{bmatrix}
\cdot
\begin{bmatrix}
x^{-x_w}t^{-t_w} & 0 & 0 \\
\num(w) & 1 & 0 \\
\den(w) & 0 & 1 \\
\end{bmatrix}
=
\begin{bmatrix}
x^{-(x_w+1)}t^{-t_w} & 0 & 0 \\
\num(w) & 1 & 0 \\
\den(w)+x^{-x_w}t^{-t_w} & 0 & 1 \\
\end{bmatrix}\\
\alpha(t\circ w)
&=
\begin{bmatrix}
t^{-1} & 0 & 0 \\
0 & 1 & 0 \\
0 & 0 & 1 \\
\end{bmatrix}
\cdot
\begin{bmatrix}
x^{-x_w}t^{-t_w} & 0 & 0 \\
\num(w) & 1 & 0 \\
\den(w) & 0 & 1 \\
\end{bmatrix}
=
\begin{bmatrix}
x^{-x_w}t^{-(t_w+1)} & 0 & 0 \\
\num(w) & 1 & 0 \\
\den(w) & 0 & 1 \\
\end{bmatrix}\\
\alpha(a\circ w)
&=
\begin{bmatrix}
1 & 0 & 0 \\
1 & 1 & 0 \\
0 & 0 & 1 
\end{bmatrix}
\cdot
\begin{bmatrix}
x^{-x_w}t^{-t_w} & 0 & 0 \\
\num(w) & 1 & 0 \\
\den(w) & 0 & 1 \\
\end{bmatrix}
=
\begin{bmatrix}
x^{-x_w}t^{-t_w} & 0 & 0 \\
\num(w)+x^{-x_w}t^{-t_w} & 1 & 0 \\
\den(w) & 0 & 1 \\
\end{bmatrix}\\
\alpha(x^{-1}\circ w)
&=
\begin{bmatrix}
x & 0 & 0 \\
0 & 1 & 0 \\
x & 0 & 1 
\end{bmatrix}
\cdot
\begin{bmatrix}
x^{-x_w}t^{-t_w} & 0 & 0 \\
\num(w) & 1 & 0 \\
\den(w) & 0 & 1 \\
\end{bmatrix}
=
\begin{bmatrix}
x^{-(x_w -1)}t^{-t_w} & 0 & 0 \\
\num(w) & 1 & 0 \\
\den(w) + x^{-(x_w -1)}t^{-t_w} & 0 & 1 \\
\end{bmatrix}\\
\alpha(t^{-1}\circ w)
&=
\begin{bmatrix}
t & 0 & 0 \\
0 & 1 & 0 \\
0 & 0 & 1 \\
\end{bmatrix}
\cdot
\begin{bmatrix}
x^{-x_w}t^{-t_w} & 0 & 0 \\
\num(w) & 1 & 0 \\
\den(w) & 0 & 1 \\
\end{bmatrix}
=
\begin{bmatrix}
x^{-x_w}t^{-(t_w -1)} & 0 & 0 \\
\num(w) & 1 & 0 \\
\den(w) & 0 & 1 \\
\end{bmatrix}
\end{align*}}
In every case, the expressions in the first column coincide with the formulae from
Lemmas \ref{le:p-num-den1}, \ref{le:p-num-den3}, \ref{le:p-num-den5}, \ref{le:p-num-den2} and \ref{le:p-num-den4}.
Hence, the statement holds.
\end{proof}

\subsection{Tracing equation $w(a,t,x)$ on the $xt$-grid}
\label{se:tracing}

Define the $xt$-grid as the Cayley graph of 
$\MZ^2 = \Set{t^ix^j}{i,j\in\MZ}$
in which every edge is labeled with $x^\pm,t^\pm$. 
That makes the $xt$-grid an automaton with infinitely many states
and with the initial state $x^0t^0$.
The next statement is obvious.

\begin{lemma}
There is a one-to-one correspondence between
$\delta$-parametric polynomials $\blf$ and
finite sets $S_{f}\subset \MZ^2$ given by the formula
$\blf = \sum_{x^it^j \in S_{f}} z^{j+i\delta}$.
\end{lemma}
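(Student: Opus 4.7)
The plan is to leverage the ring isomorphism between $\delta$-parametric polynomials and the group ring $\MZ_2[\MZ^2]$ established in the preceding Magnus-type embedding subsection, which identifies the monomial $z^{j+i\delta}$ with the basis element $t^j x^i$. Under this identification, the claim reduces to the standard fact that every element of a group ring with $\MZ_2$ coefficients is uniquely determined by its (finite) support, since the coefficients lie in $\{0,1\}$.

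Concretely, I would define the forward map $\blf \mapsto S_{f}$ by first expanding $\blf = \sum_{i=s}^{t} f_i(z)\, z^{i\delta}$ with $f_i(z) = \sum_j a_{ij} z^j$ and $a_{ij} \in \MZ_2$, then setting
\[
S_{f} = \Set{x^i t^j \in \MZ^2}{a_{ij}=1}.
\]
This set is finite because only finitely many $a_{ij}$ are nonzero, and by construction $\blf = \sum_{x^i t^j \in S_{f}} z^{j+i\delta}$. The inverse map sends a finite $S \subset \MZ^2$ to the polynomial $\sum_{x^i t^j \in S} z^{j+i\delta}$, where terms are grouped by their $i$-exponent to form the coefficient polynomials $f_i(z)$.

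The only nontrivial step is verifying that these two constructions are mutually inverse, which reduces to injectivity of the indexing $(i,j) \mapsto z^{j+i\delta}$. This is exactly where the isomorphism with $\MZ_2[\MZ^2]$ is invoked: the family $\{t^j x^i\}$ is a $\MZ_2$-basis, so $z^{j+i\delta}$ and $z^{j'+i'\delta}$ are formally distinct whenever $(i,j)\ne(i',j')$, and no cancellation or merging of terms can occur under the formula. The hard work has already been done in establishing the isomorphism; the remaining verification is a routine unpacking of definitions.
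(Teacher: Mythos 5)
Your proof is correct and follows exactly the reasoning the paper has in mind: the paper itself gives no proof (it merely flags the statement as obvious immediately after introducing the isomorphism $z^{a+b\delta}\mapsto t^a x^b$ with $\MZ_2[\MZ^2]$), and your argument — identifying a $\delta$-parametric polynomial with its support under that isomorphism, noting that $\MZ_2$-coefficients make the support determine the element, and unwinding the definitions to match the indexing $(i,j)\leftrightarrow z^{j+i\delta}$ — is precisely the routine verification being left to the reader.
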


For $w=(a,t,x)$ define the finite sets $N_w,D_w \subset \MZ^2$ by
$$
N_w = S_{\num(w)}\ \mbox{ and }\ D_w = S_{\den(w)}.
$$
In this section, we discuss a geometric procedure that 
we call the \emph{word-tracing},
that allows to construct $N_w$ and $D_w$ by following the path
on the $xt$-grid, defined by $w$, denoted by $\Path(w)$,
and adding points to $N_w$ and $D_w$ as we go.

Recall that the matrix formulas from the proof of
Proposition \ref{pr:num-den-matrices} compute 
the triple \\
$((-x_w,-t_w),\num_\delta(w),\den_\delta(w))$,
processing a given word $w$ letter by letter from the right to the left.
That translates into a geometric walk that follows a path $\Path(w)$
defined by inductively by the following formulas
($\SD$ denotes the symmetric difference of sets)
\begin{itemize}
\item 
For $w=\varepsilon$ we have 
\begin{itemize}
\item 
$\Path(\varepsilon)$ is the empty path that starts and ends at $x^0t^0$,
\item 
coordinates of the current point are $x_\varepsilon=t_\varepsilon=0$, 
\item 
$N_\varepsilon=\varnothing$ and $D_\varepsilon=\varnothing$.
\end{itemize}
\item 
For $w'=x\circ w$
\begin{itemize}
\item 
$\Path(w')=\Path(w)\circ e$, where
$e=x^{-x_w}t^{-t_w}\to x^{-(x_w+1)}t^{-t_w}$;
\item 
$x_{w'}=x_{w}+1$ and $t_{w'}=t_{w}$;
\item 
$D_{w'} = D_w \SD \{x^{-x_w}t^{-t_w}\}$ and $N_{w'}=N_w$.
\end{itemize}
\item 
For $w'=x^{-1}\circ w$
\begin{itemize}
\item 
$\Path(w')=\Path(w)\circ e$,
where $e=x^{-x_w}t^{-t_w} \to x^{-(x_w-1)}t^{-t_w}$;
\item 
$x_{w'}=x_{w}-1$ and $t_{w'}=t_{w}$;
\item 
$D_{w'} = D_w \SD \{x^{-(x_w-1)}t^{-t_w}\}$ and $N_{w'}=N_w$.
\end{itemize}
\item 
For $w'=t\circ w$
\begin{itemize}
\item 
$\Path(w')=\Path(w)\circ e$,
where $e=x^{-x_w}t^{-t_w} \to x^{-x_w}t^{-(t_w+1)}$;
\item 
$x_{w'}=x_{w}$ and $t_{w'}=t_{w}+1$,
\item 
$D_{w'} = D_w$ and $N_{w'}=N_w$.
\end{itemize}
\item 
For $w'=t^{-1}\circ w$
\begin{itemize}
\item 
$\Path(w')=\Path(w)\circ e$,
where $e=x^{-x_w}t^{-t_w} \to x^{-x_w}t^{-(t_w-1)}$;
\item 
$x_{w'}=x_{w}$ and $t_{w'}=t_{w}-1$,
\item 
$D_{w'} = D_w$ and $N_{w'}=N_w$.
\end{itemize}
\item 
For $w'=a^{\pm}\circ w$
\begin{itemize}
\item 
$\Path(w')=\Path(w)$;
\item 
$x_{w'}=x_{w}$ and $t_{w'}=t_{w}$,
\item 
$D_{w'} = D_w$ and $N_{w'}=N_w \SD \{x^{-x_w}t^{-t_w}\}$.
\end{itemize}
\end{itemize}
To summarize, tracing $w$ letter-by-letter right to left on the $xt$-grid
\begin{itemize}
\item 
reading $x/x^{-1}$ we make a step left/right and add the current/right
point to $D_w$ (modulo two) resp.,
\item 
reading $t/t^{-1}$ we make a step down/up resp.,
\item 
reading $a/a^{-1}$ we stay at the current point and add the new
point to $N_w$ (modulo two).
\end{itemize}

\begin{figure}[h]
\centering
\includegraphics[width=0.3\linewidth]{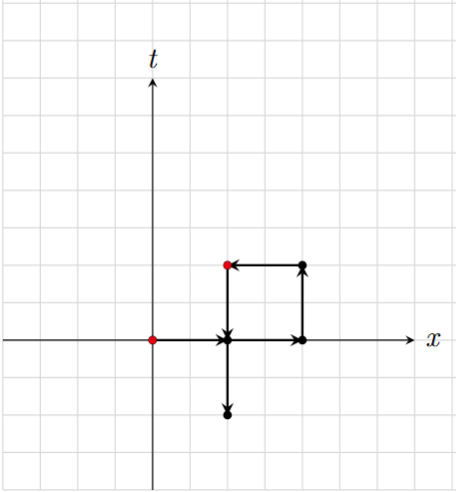}
\hspace{0.9cm}
\includegraphics[width=0.3\linewidth]{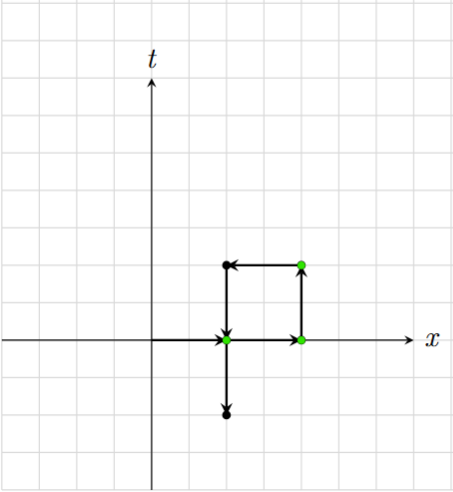}

\caption{Tracing the word $w= t^{2} a x t^{-1} x^{-2} a$ on the $xt$-grid produces the sets $N_w = \{ x^0 t^0, x^{1} t^{1} \}$ and $D_w = \{ x^{1}t^0, x^{2}t^0, x^{2}t^{1} \}$ for the polynomials $\num(w)$ and $\den(w)$.
The left and right diagrams show $N_w$ (red points) and $D_w$ (green points) resp.}
\label{fig:tracing_example}
\end{figure}

\begin{lemma}\label{le:square}
$N_w,D_w\ \subseteq\ [-|w|,|w|]^2$ for every $w=w(a,t,x)$.
\end{lemma}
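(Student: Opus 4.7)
The plan is to prove the claim by induction on $|w|$, strengthening the statement so that the induction goes through cleanly. Specifically, I will prove by induction on $|w|$ that simultaneously
\[
N_w \subseteq [-|w|,|w|]^2,\quad D_w \subseteq [-|w|,|w|]^2,\quad |x_w| \le |w|,\quad |t_w| \le |w|.
\]
The final two bounds on the current grid coordinates are what makes the inductive step routine: the recursive formulas from Section~\ref{se:tracing} add, at each step, a single point whose coordinates agree with (or are unit-shifts of) the current position $(-x_w, -t_w)$.

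For the base case $w = \varepsilon$, everything is zero or empty, so the four bounds hold trivially. For the inductive step, write $w' = c \circ w$ with $c \in \{a^{\pm 1}, t^{\pm 1}, x^{\pm 1}\}$ and apply the induction hypothesis to $w$. I consider the six cases in turn using the recursion of Section~\ref{se:tracing}:
\begin{itemize}
\item If $c = t^{\pm 1}$, then $N_{w'}=N_w$, $D_{w'}=D_w$, $x_{w'}=x_w$, $t_{w'}=t_w\pm 1$, so all sets remain in $[-|w|,|w|]^2 \subseteq [-|w'|,|w'|]^2$ and $|t_{w'}| \le |w|+1 = |w'|$.
\item If $c = a^{\pm 1}$, the coordinates are unchanged and $N_{w'} = N_w \SD \{x^{-x_w}t^{-t_w}\}$; by the inductive bound the new point lies in $[-|w|,|w|]^2$.
\item If $c = x$, the added point is $x^{-x_w}t^{-t_w}$, already inside $[-|w|,|w|]^2$, and $|x_{w'}| = |x_w + 1| \le |w|+1 = |w'|$.
\item If $c = x^{-1}$, the added point is $x^{-(x_w-1)}t^{-t_w}$, whose first coordinate satisfies $|-(x_w-1)| \le |x_w|+1 \le |w|+1 = |w'|$, so it lies in $[-|w'|,|w'|]^2$, and again $|x_{w'}| \le |w'|$.
\end{itemize}
In every case the strengthened invariant is preserved, completing the induction.

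There is essentially no obstacle here: the statement is a quantitative bookkeeping bound that falls out directly from the geometric description of $\Path(w)$. The only thing to be careful about is that the inductive invariant must include the coordinate bounds $|x_w|, |t_w| \le |w|$, since the size of $N_{w'}$ and $D_{w'}$ in the step $w' = c \circ w$ is controlled not by the previous set bounds alone but by the current position on the $xt$-grid.
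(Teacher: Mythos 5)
Your proof is correct, and it is simply a careful formalization of the paper's one-line argument (``tracing a word $w$ of length $|w|$ does not take us outside of the square $[-|w|,|w|]^2$''): same idea, namely that the walk's position $(-x_w,-t_w)$ stays within $[-|w|,|w|]^2$, spelled out as an induction with the strengthened invariant $|x_w|,|t_w|\le|w|$.
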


\begin{proof}
Because tracing a word $w$ of length $|w|$ does not take us outside
of the square $[-|w|,|w|]^2$.
\end{proof}


\subsection{Complexity of solving the equation $w(a,t,x)=1$ when $\sum \varepsilon_i \ne 0$}\label{subsection: complexity}

Suppose that the equation $w(a,t,x)=1$ satisfies $\sigma_x(w)\ne 0$.
Then the value of $\delta$ associated with the word $w(a,t,x)$ 
is defined by \eqref{eq:delta}.
Geometrically, $\delta=-\tfrac{t_w}{x_w}$ corresponds to
the negative slope of the line passing through the origin
$(0,0)$ and the terminus $(-x_w,-t_w)$ of $\Path(w)$, 
which implies the following.

\begin{lemma}\label{lemma:|w|}
$\sigma_x(w)\ne 0\ \ \Rightarrow\ \ 1\le |\delta| \le |w|$.
\end{lemma}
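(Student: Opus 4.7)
The plan is to exploit the geometric picture developed in Section~\ref{se:tracing}. Tracing $w$ letter-by-letter from right to left describes a path $\Path(w)$ on the $xt$-grid starting at the origin $(0,0)$ and ending at $(-x_w,-t_w)$: each letter $x^{\pm 1}$ contributes one horizontal unit step, each letter $t^{\pm 1}$ contributes one vertical unit step, and each letter $a^{\pm 1}$ contributes no step. Hence $\Path(w)$ consists of at most $|w|$ unit steps. Since the net horizontal (resp.\ vertical) displacement of a unit-step path cannot exceed the number of horizontal (resp.\ vertical) steps in it, one obtains the basic inequality
$$
|x_w| + |t_w| \le |w|.
$$
(This is also a direct consequence of Lemma~\ref{le:square}, which places the endpoint of $\Path(w)$ in the square $[-|w|,|w|]^2$.)

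The upper bound in the lemma is then immediate from \eqref{eq:delta}: the hypothesis $\sigma_x(w) = x_w \ne 0$ together with $x_w\in\MZ$ gives $|x_w|\ge 1$, so
$$
|\delta| \;=\; \frac{|t_w|}{|x_w|} \;\le\; |t_w| \;\le\; |w|.
$$
For the lower bound $|\delta|\ge 1$, whenever $\delta$ is an actual integer shift in $\MZ$ produced by \eqref{eq:delta}, the condition $\delta\ne 0$ forces $|\delta|\ge 1$ automatically by integrality; the degenerate case $\delta = 0$ (i.e.\ $t_w=0$) is trivial in the sense that no search over $\delta$ is required.

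I do not expect any substantive obstacle here: the entire argument reduces to the bounding-box observation for $\Path(w)$ and the integrality of $x_w$, $t_w$, and $\delta$. The only care required is in separating the two divisibility regimes (whether $\delta = 0$ or $\delta \ne 0$) so that the lower bound $|\delta|\ge 1$ is invoked only in the nontrivial case.
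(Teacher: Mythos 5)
Your argument for the upper bound matches the paper's own proof, which is simply ``Because $|x_w|,|t_w|\le |w|$'': both you and the paper bound $|t_w|$ by the number of $t^{\pm 1}$-steps in $\Path(w)$ and then combine with $|x_w|\ge 1$ from $\sigma_x(w)\ne 0$ to get $|\delta| = |t_w|/|x_w| \le |t_w| \le |w|$. (Your stronger observation $|x_w|+|t_w|\le|w|$ is not needed; $|t_w|\le|w|$ already suffices.)

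You also caught something the paper's one-line proof silently skips: the lower bound $1\le|\delta|$ does fail when $t_w=0$ (and $x_w\ne 0$), since then $\delta=-t_w/x_w=0$. The paper's proof addresses only the upper bound, so the lemma as stated is slightly too strong; this is inconsequential because only the upper bound $|\delta|\le|w|$ is used in the subsequent complexity analysis (Proposition~\ref{Prop:algorithm-equations}), and $\delta=0$ is a perfectly legitimate value to try. Your care in isolating the degenerate case is appropriate, but do not overread the phrase ``no search over $\delta$ is required'' --- in the $x_w\ne 0$ regime there is never a search over $\delta$, whether $\delta=0$ or not, since \eqref{eq:delta} pins it down uniquely.
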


\begin{proof}
Because $|x_w|,|t_w|\le |w|$.
\end{proof}

Geometrically, substitution of the value of $\delta$ 
into $\num(w)$ and $\den(w)$ 
corresponds to projecting the points
$N(w)$ and $D(w)$ onto the $t$-axis
along the direction of the vector $\Vec{v} = (1, -\delta)$.

\begin{figure}[H]
\centering
\includegraphics[width=0.4\linewidth]{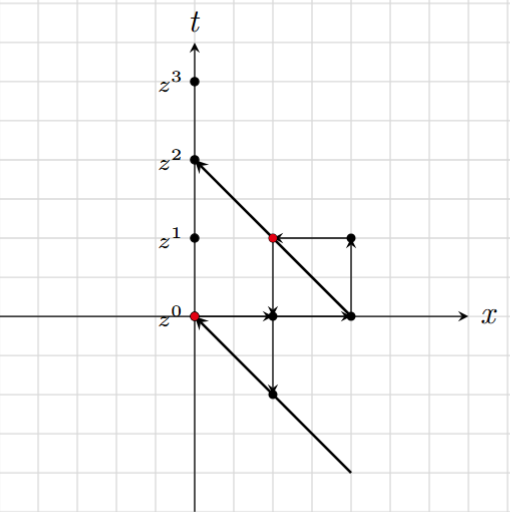}
\hspace{1cm}
\includegraphics[width=0.4\linewidth]{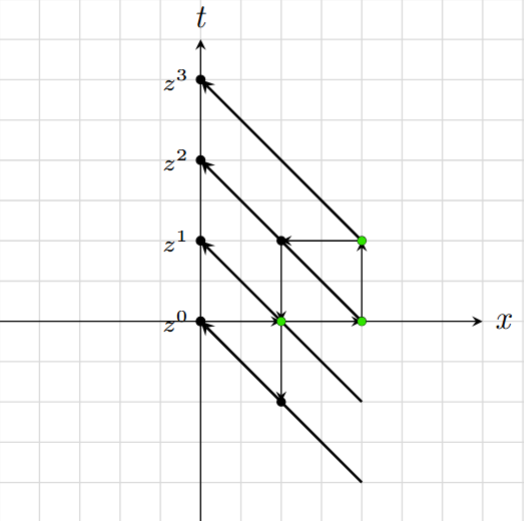}
\caption{Substituting $\delta \leftarrow 1 $ to the example in Figure \ref{fig:tracing_example}, we can visualize the numerator (see left diagram) and denominator (see right diagram) polynomials on $t$-axis.
The sets $N_w$ and $D_w$ are $N_w = \{ z^0, z^2 \}$ and $D_w =\{ z^1, z^2, z^3 \}$. }
\label{fig:proj_onto_t}
\end{figure}

\begin{corollary}\label{co:subst-deg-ord}
Suppose that $\sigma_x(w)\ne 0$. Then $\delta$ is defined by \eqref{eq:delta}
and the parametric polynomials $\num_{\delta}(w), \den_{\delta}(w)$
instantiated with $\delta$ satisfy
\begin{enumerate}
\item[(a)]
$\num_{\delta}(w)\ne 0
\ \ \Rightarrow\ \ 
-|w|^2 - |w| \le
|\ord(\num_{\delta}(w))|,
|\deg(\num_{\delta}(w))|
\le |w|^2+|w|.
$
\item[(b)]
$\den_{\delta}(w)\ne 0
\ \ \Rightarrow\ \ 
-|w|^2 - |w| \le
|\ord(\den_{\delta}(w))|,
|\deg(\den_{\delta}(w))|
\le |w|^2+|w|.
$
\end{enumerate}
\end{corollary}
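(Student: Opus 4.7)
The plan is to read off the bounds directly from the geometric picture developed in Section~\ref{se:tracing}. By the one-to-one correspondence $x^i t^j \mapsto z^{j + i\delta}$, the instantiated polynomials can be written as
\[
\num_\delta(w) = \sum_{x^i t^j \in N_w} z^{j + i\delta},
\qquad
\den_\delta(w) = \sum_{x^i t^j \in D_w} z^{j + i\delta},
\]
so every monomial appearing in either polynomial has exponent of the form $j + i\delta$ with $(i,j)$ a point of $N_w$ or $D_w$. By Lemma~\ref{le:square}, these sets sit inside $[-|w|,|w|]^2$, which gives $|i|, |j| \le |w|$.

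Next I would invoke Lemma~\ref{lemma:|w|}, which, under the hypothesis $\sigma_x(w)\ne 0$, yields $|\delta|\le |w|$. Combining the two bounds by the triangle inequality gives, for every exponent appearing,
\[
|j + i\delta| \le |j| + |i|\cdot |\delta| \le |w| + |w|\cdot |w| = |w|^2 + |w|.
\]
Since $\ord$ and $\deg$ of a nonzero Laurent polynomial are, respectively, the minimum and maximum of its surviving exponents, both $|\ord(\num_\delta(w))|,|\deg(\num_\delta(w))|$ and $|\ord(\den_\delta(w))|,|\deg(\den_\delta(w))|$ are bounded by $|w|^2 + |w|$, which is exactly (a) and (b).

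There is no real obstacle here: the statement is a direct consequence of the two cited lemmas. The only subtlety worth a line of justification is that distinct lattice points $(i_1,j_1), (i_2,j_2)$ in $N_w$ or $D_w$ may collapse to the same exponent $j + i\delta$ after substitution, possibly cancelling in $\MZ_2$; but since the bound on $|j + i\delta|$ holds pointwise for every contributing monomial, any such cancellations can only shrink the support, so the inequalities on $\ord$ and $\deg$ persist whenever the substituted polynomial is nonzero (which is precisely the hypothesis in each part).
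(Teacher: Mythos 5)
Your proof is correct and follows essentially the same route as the paper: both arguments combine Lemma~\ref{le:square} (which puts $N_w,D_w$ inside $[-|w|,|w|]^2$) with Lemma~\ref{lemma:|w|} (which bounds $|\delta|\le |w|$), then observe that every surviving exponent has the form $j+i\delta$ and bound it by $|w|^2+|w|$. The paper phrases this as a projection of the square onto an interval, while you use the triangle inequality and add an explicit remark about cancellations in $\MZ_2$; these are cosmetic differences, not a different argument.
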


\begin{proof}
Projecting the point $(x,t)\in N_w$ onto the $t$-axis along the direction 
of the vector $\Vec{v} = (1, -\delta)$ creates the point
$(0,t+\delta x)$, which contributes the monomial
$z^{t+\delta x}$ to $\num_\delta(w)$. 
Hence, the square $[-|w|,|w|]^2$ of Lemma \ref{le:square}
is projected onto the interval
$[-|w|^2-|w|,|w|^2+|w|]$.
Therefore, nontrivial $\num_{\delta}(w)$ satisfies
$$
-|w|^2 - |w| \le
\ord(\num_{\delta}(w)),
\deg(\num_{\delta}(w))
\le |w|^2+|w|.
$$
Same works for $\den(w)$.
\end{proof}

The bounds of Corollary \ref{co:subst-deg-ord} are asymptotically sharp, 
as illustrated by the following example.

\begin{example}
Let us consider the word $w = t^{1-n} x^{n-1} a t^{-1} x^{-n}a$ of the length $|w| = 3n+ 1$. 
Its trace and the polynomials $\num(w)$ and $\den(w)$
are visualized in Figure \ref{fig:quadratic_den_and_num}.
Its terminus is the point $(1,n)$,
$x_w=-1$, $t_w=-n$, and $\delta = -n$.
Replacing $\delta$ with $-n$ in $\num(w)$ and $\den(w)$ we get
$\num_{\delta}(w) = z^{-n^2 + 1} + z^0$ and
$\den_{\delta}(w) = (z^{-n} + \dots + z^{-n^2}) + (z^{-2n+1} + \dots + z^{-n^2 + 1})
= \sum_{i=1}^n z^{-in} + \sum_{i=2}^n z^{-in + 1}$.

\begin{figure}[h]
    \centering
    \includegraphics[width=0.45\linewidth]{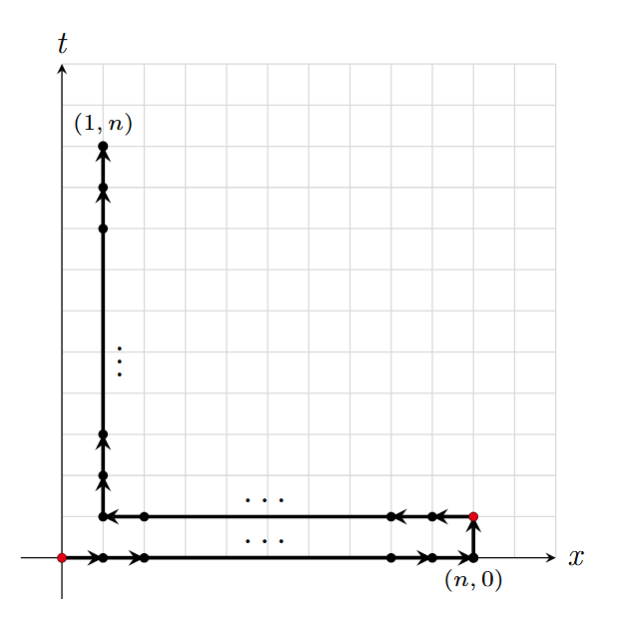}
    \hspace{0.5cm}
    \includegraphics[width=0.45\linewidth]{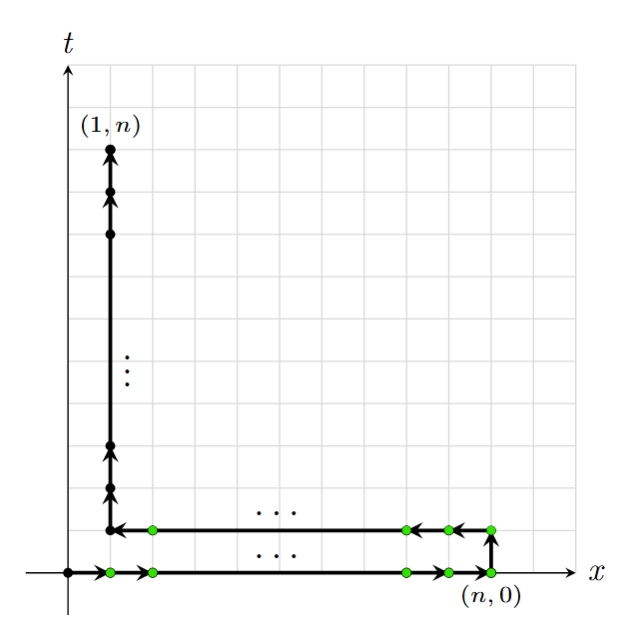}
\caption{Tracing the word $w = t^{1-n} x^{n-1} a t^{-1} x^{-n} a $ 
creates a path from $(0,0)$ to $(1,n)$ and produces the sets
$N_w =\{ x^n t^1, x^0 t^0 \}$ and 
$D_w =\{ x^1 t^0, x^2 t^0 \dots, x^n t^0, x^2 t^1, x^3 t^1, \dots , x^n t^1 \}$. 
}
    \label{fig:quadratic_den_and_num}
\end{figure}
\end{example}

\begin{proposition}\label{pr:compute-num-den}
If $\sigma_x(w) \ne 0$, then $\pres(\num_\delta(w))$ and 
$\pres(\den_\delta(w))$ can be computed in $O(|w|^2)$ time.
\end{proposition}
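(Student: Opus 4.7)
The plan is to observe that the output already has $\Theta(|w|^2)$ bits in the worst case (as illustrated by the example just before the proposition), so any algorithm must spend $\Omega(|w|^2)$ time, and Corollary~\ref{co:subst-deg-ord} tells us exactly which exponent interval we need to manage: everything lies in $[-(|w|^2+|w|),|w|^2+|w|]$. This suggests representing $\num_\delta(w)$ and $\den_\delta(w)$ by bit-arrays indexed over this fixed interval and accumulating each monomial contribution by a single XOR.

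First I would scan $w$ once to compute $x_w=\sigma_x(w)$ and $t_w=\sigma_t(w)$ in $O(|w|)$ time, then set $\delta=-t_w/x_w$. If $x_w\nmid t_w$, condition (C1) admits no integer $\delta$, and I would return the zero polynomials; otherwise Lemma~\ref{lemma:|w|} gives $|\delta|\le|w|$. Under the transdichotomous Word-RAM model of Section~\ref{se:model-computations}, integers of magnitude $O(|w|^2)$ fit in a single machine word, so all arithmetic below is unit cost.

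Second, I would allocate two bit-arrays $A_{\num}$ and $A_{\den}$ of length $2|w|^2+2|w|+1$, initialized to $0$ in $O(|w|^2)$ time, with indices ranging over $[-(|w|^2+|w|),|w|^2+|w|]$. Then I would process $w$ letter-by-letter from the right to the left, applying Lemmas~\ref{le:p-num-den1}--\ref{le:p-num-den5} with $\delta$ already substituted. Concretely, I maintain running suffix quantities $x',t'$ (initially $0$) together with the exponent $p=-t'-x'\delta$ that the current letter contributes: reading $a^{\pm 1}$ toggles bit $p$ of $A_{\num}$; reading $x$ toggles bit $p$ of $A_{\den}$ and then updates $x'\leftarrow x'+1$, $p\leftarrow p-\delta$; reading $x^{-1}$ first updates $x'\leftarrow x'-1$, $p\leftarrow p+\delta$ and then toggles bit $p$ of $A_{\den}$; reading $t^{\pm 1}$ updates $t'\leftarrow t'\pm 1$, $p\leftarrow p\mp 1$. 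Each letter costs $O(1)$, so this phase costs $O(|w|)$. Finally, I would sweep each bit-array once to locate the least and greatest set bits, read off $\ord$ and $\deg$, and emit the coefficient bit-string $w_f$ required by the representation of Section~\ref{se:Laurent}, taking another $O(|w|^2)$ time. Summing the phases yields the claimed $O(|w|^2)$ bound.

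I do not expect a serious obstacle. Correctness of every per-letter update is exactly Lemmas~\ref{le:p-num-den1}--\ref{le:p-num-den5} evaluated at the fixed $\delta$; the XOR-toggling correctly accumulates repeated monomials because coefficients live in $\MZ_2$; and the allocation-range bound is precisely Corollary~\ref{co:subst-deg-ord}. The only thing worth double-checking is that when several letters contribute the same exponent $p$ (i.e., distinct grid points project to the same value under $(x,t)\mapsto t+x\delta$), the bit-array XOR reproduces exactly the $\MZ_2$-sum prescribed by the symmetric differences of Section~\ref{se:tracing}, which is immediate.
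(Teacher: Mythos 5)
Your proposal is correct and matches the paper's own proof essentially step for step: compute $\delta=-t_w/x_w$, allocate bit-arrays over the interval $[-(|w|^2+|w|),\,|w|^2+|w|]$ given by Corollary~\ref{co:subst-deg-ord}, trace $w$ right-to-left while maintaining the exponent $-t'-x'\delta$ incrementally (thereby avoiding multiplications), and sweep once to form the $\pres$ representation. The only cosmetic difference is that you handle the corner case $x_w\nmid t_w$ explicitly here, whereas the paper defers that check to Proposition~\ref{Prop:algorithm-equations}.
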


\begin{proof}
Representations for $\num_\delta(w)$ and $\den_\delta(w)$ 
are constructed in two steps.
First, compute $x_w = \sigma_x(w)$ and $t_w=\sigma_t(w)$
and use \eqref{eq:delta} to compute $\delta$ as $-t_w/x_w$.
Division of numbers bounded by $|w|$ can be done in $O(|w|)$ time.

Allocate memory for two functions $n,d:[-|w|^2-w,|w|^2+|w|] \to \{0,1\}$
to store the coefficients of $\num_\delta(w)$ or $\den_\delta(w)$ resp.
Then trace $w$ as described in Section \ref{se:tracing} and
instead of adding elements $x^{-x} t^{-t}$ to $N_w$
add $z^{t+\delta x}$ to $\num_\delta(w)$.
Notice that multiplication of $x$ by $\delta$ can be avoided
(replaced by adding/subtracting $\delta$ as $w$ is traced).
Finally, processing left-to-right form $\pres(\num_\delta(w))$.
The same works for $\pres(\den_\delta(w))$.
\end{proof}

\begin{proposition}\label{Prop:algorithm-equations}
There is a nearly quadratic $O(|w|^2 \log |w| \log \log |w|)$ 
time algorithm that for a given one-variable equation $w(a,t,x)=1$
satisfying $\sigma_x(w)\ne 0$, decides if it has a solution or not.
\end{proposition}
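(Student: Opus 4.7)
The plan is to combine the earlier structural results into a direct algorithm: use Lemma~\ref{le:C1-C2} to split the equation into the scalar condition (C1) and the polynomial condition (C2), observe that when $\sigma_x(w)\ne 0$ condition (C1) uniquely pins down $\delta$, and then reduce (C2) to a single divisibility test in $\MZ_2[z^\pm]$ to which Proposition~\ref{pr:Laurent-complexity}(d) applies.

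Concretely, the algorithm I would describe proceeds in the following steps. First, compute $x_w=\sigma_x(w)$ and $t_w=\sigma_t(w)$ in $O(|w|)$ time by scanning $w$. Since $\delta\in\MZ$, check whether $x_w$ divides $t_w$; if not, output \emph{no solution} (condition (C1) is unsatisfiable over $\MZ$). Otherwise set $\delta=-t_w/x_w$ as in \eqref{eq:delta}; by Lemma~\ref{lemma:|w|} we have $|\delta|\le |w|$, so this division is carried out in $O(|w|)$ time. Next, invoke Proposition~\ref{pr:compute-num-den} to compute the (non-strict) representations of the Laurent polynomials $\num_\delta(w)$ and $\den_\delta(w)$ in $O(|w|^2)$ time; by Corollary~\ref{co:subst-deg-ord} their sizes are bounded by $n=O(|w|^2)$.

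It remains to decide condition (C2), which for the fixed $\delta$ becomes the equation $\num_\delta(w)=f\cdot\den_\delta(w)$ in the unknown $f\in\MZ_2[z^\pm]$. There are two sub-cases. If $\den_\delta(w)=0$, a solution exists if and only if $\num_\delta(w)=0$ as well, which is checked in $O(|w|^2)$ time by inspecting the coefficient string. If $\den_\delta(w)\ne 0$, the existence of $f$ is equivalent to $\den_\delta(w)\mid\num_\delta(w)$ in $\MZ_2[z^\pm]$; apply Proposition~\ref{pr:Laurent-complexity}(d) with $n=O(|w|^2)$, which runs in $O(n\log n\log\log n)=O(|w|^2\log|w|\log\log|w|)$ time.

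Summing the costs of the four stages gives the claimed $O(|w|^2\log|w|\log\log|w|)$ bound. No step presents a real obstacle: the geometric/inductive analysis of Section~\ref{se:num-den} already gives the quadratic-size instantiation, and the divisibility test is handled by the nearly linear algorithm for $\MZ_2[z^\pm]$. The only point that requires care is the integrality check of $-t_w/x_w$ and the degenerate case $\den_\delta(w)=0$, both of which are disposed of by the case analysis above before invoking Proposition~\ref{pr:Laurent-complexity}(d).
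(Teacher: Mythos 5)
Your proof is correct and takes essentially the same route as the paper: compute $\delta$ from (C1), instantiate $\num_\delta(w)$ and $\den_\delta(w)$ via Proposition~\ref{pr:compute-num-den}, and run the nearly-linear divisibility test of Proposition~\ref{pr:Laurent-complexity}(d) with $n=O(|w|^2)$. You are in fact slightly more explicit than the paper about the two degenerate cases ($x_w\nmid t_w$ and $\den_\delta(w)=0$), both handled correctly.
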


\begin{proof}
By Proposition \ref{pr:compute-num-den}, when $\sigma_x(w)\ne 0$,
we can compute $\delta$, $\num(w)$, and $\den(w)$ in $O(|w|^2)$ time,
or decide that $w=1$ has no solution in $O(|w|^2)$ time.
Check whether $\den_\delta(w) \mid \num_\delta(w)$.
By Proposition \ref{pr:Laurent-complexity}(d)
that requires $O((|w|^2+|w|) \log (|w|^2+|w|)^2 \log \log (|w|^2+|w|) )$
time. That can be simplified to $O(|w|^2 \log |w| \log \log |w|)$.
\end{proof}

\subsection{Generic-case complexity of the Diophantine problem for one-variable equations over $\MZ_2\wr\MZ$}

In this section we argue that a generic (typical) equation $w(a,t,x) = 1$
satisfies $\sigma_x(w)\ne 0$ and, hence, can be solved in nearly quadratic time.

Let us review several definitions related to generic-case complexity
\cite[Chapter 10]{MSU_book:2011}.
For $m\in\MN$ define the (finite) set
$$
S_{m} = \Set{w\in F(a,t,x)}{|w|=m},
$$
called the \emph{sphere} of radius $m$ in the Cayley graph of $F(a,t,x)$.
Let $P_m$ be the uniform probability distribution on $S_m$.
We say that a set $S\subseteq F(a,t,x)$ is generic in $F(a,t,x)$ if
$$
P_m(S\cap S_m) = \frac{|S\cap S_m|}{|S_m| } \to 1 \mbox{ as } m\to\infty,
$$
i.e., if the density of $S$ within the sets $S_m$
converges to $1$.
We say that a property $P(w)$ of elements $w\in F(a,t,x)$ is generic if the set 
$$
S=\Set{w}{P \mbox{ holds for } w}
$$
is generic. We say that a \emph{typical word} satisfies $P$
if $P$ is generic.

A uniformly random word $w(a,t,x)\in S_m$ of length $m$
can be generated as a sequence $w_1,\dots,w_m$ of
letters $\{a^\pm,t^\pm,x^\pm\}$ with no cancellations as follows:
\begin{itemize}
\item 
choose $w_1$ uniformly randomly from $\{a^\pm,t^\pm,x^\pm\}$;
\item 
choose $w_{i+1}$ uniformly randomly from
$\{a^\pm,t^\pm,x^\pm\} \setminus \{w_{i}^{-1}\}$ for $i\ge 1$.
\end{itemize}
This type of generation procedure can be modeled by a discrete Markov chain
$\Gamma$ with seven states $Q=\{\varepsilon,a^\pm,t^\pm,x^\pm\}$,
the initial state $\varepsilon$, and with transitions 
\begin{itemize}
\item 
$\varepsilon \to q$, where $q\in \{a^\pm,t^\pm,x^\pm\}$ with uniform probability
$\tfrac{1}{6}$;
\item 
$q_1 \to q_2$, where $q_1\in \{a^\pm,t^\pm,x^\pm\}$ and 
$q_2 \in \{a^\pm,t^\pm,x^\pm\} \setminus\{q_1^{-1}\}$
with uniform probability $\tfrac{1}{5}$.
\end{itemize}
$\Gamma$ defines a sequence of random elements
$q_0,q_1,q_2,\dots \in Q$
(where $q_0=\varepsilon$ with probability $1$)
and for every $m\in\MN$ 
the corresponding (uniformly distributed) words $w=q_1\dots q_m \in S_m$.
Our goal is to analyze the value of $\sigma_x(w)$ for $w$ generated this way.
Notice the following: 
\begin{equation}\label{eq:sigma_x}
\sigma_x(q_1\dots q_m) = 
|\Set{1\le i\le m}{q_i=x}| - |\Set{1\le i\le m}{q_i=x^{-1}}|.
\end{equation}
Since $\Gamma$ has a lot of symmetries and since the states $a^\pm,t^\pm$ 
are not involved in the RHS of \eqref{eq:sigma_x},
we can identify the four states $a^\pm,t^\pm$ into a single state $y$,
which creates a new transition diagram shown below, denoted by $\Gamma^\ast$.
One can think that the new process generates words in which
the letters $a^\pm,t^\pm$ are replaced with the new letter $y$.

\begin{figure}[h]
\centering
\begin{subfigure}[t]{0.5\linewidth}
\includegraphics[width=\linewidth]{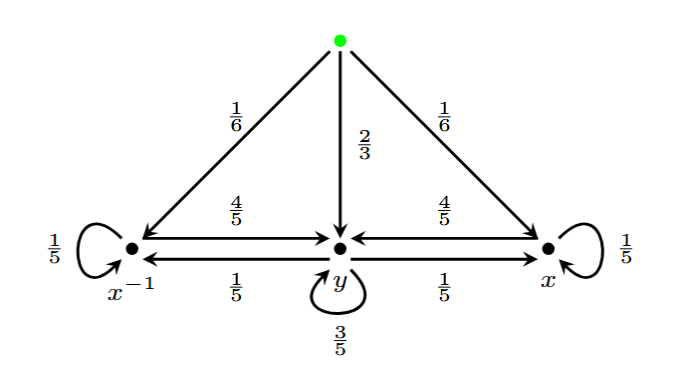}
\caption{Transition diagram $\Gamma^\ast$.}
\label{fig:transition probability with epsilon}
\end{subfigure}

\hfill

\begin{subfigure}[t]{0.5\linewidth}
\includegraphics[width=\linewidth]{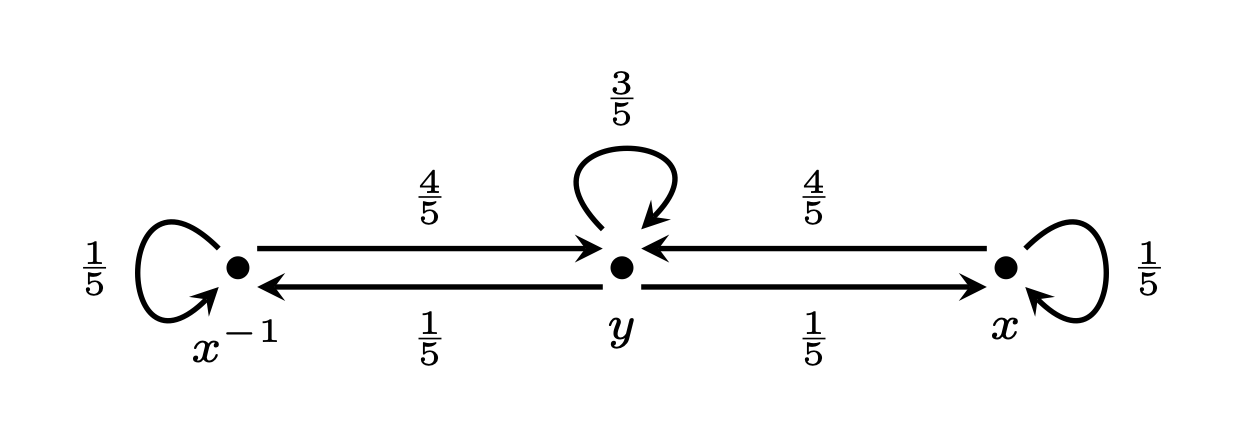}
\caption{Simplifying the diagram of $\Gamma^\ast$ to show the middle procedures of Markov chain $\Gamma$.}
\label{fig:transition probability}
\end{subfigure}
\end{figure}

Notice that this process is not irreducible. However, deleting
the state $\varepsilon$ we get a new a Markov chain $q_1',q_2',\dots$ in which 
the initial distribution is defined by
$$
\Pr[q_1' = q] = 
\begin{cases}
1/6,& \mbox{ if } q=x^{-1} \\
2/3,& \mbox{ if } q=y \\
1/6,& \mbox{ if } q=x,
\end{cases}
$$
with the transition matrix $M$
$$
M=
\begin{bmatrix}
0.2 &  0.8 & 0\\
0.2 &  0.6 & 0.2 \\
0 &  0.8 & 0.2 \\
\end{bmatrix}
$$
The new Markov chain is \emph{irreducible}
(it is possible to reach any state from any other state 
with positive probability) and \emph{aperiodic}
(the greatest common divisor of return times to every state is $1$).
Hence, it is \emph{ergodic} and 
\begin{itemize}
\item 
it admits a unique \emph{stationary distribution} $\pi$ (a probability distribution 
satisfying $\pi M=\pi$), here $\pi$
is defined by the vector $(\tfrac{1}{6},\tfrac{2}{3},\tfrac{1}{6})$,
\item 
and starting from any initial distribution, the chain’s distribution 
converges to this stationary distribution as the number of steps goes 
to infinity.
\end{itemize}
It is well-known that for finite state spaces, ergodic Markov chains are \emph{uniformly ergodic}, i.e., it converges to its stationary distribution 
at a uniform (geometric) rate, independent of the starting state.
It is also \emph{Harris recurrent}: starting from any initial state, the chain will 
eventually visit every set of positive stationary measure infinitely 
often, with probability $1$.

Now consider the function $f:\{x^{-1},y,x\} \to \{-1,0,1\}$ defined by
$$
f(x^{-1})=-1,\ \ f(y)=0,\ \ f(x)=1.
$$
Clearly, $\ME_\pi[f] = 0$. By definition of $f$
\begin{equation}\label{eq:sigma_x2}
\sigma_x(q_1'\dots q_m') = f(q_1')+\dots+f(q_m').
\end{equation}
Define a random value (the time averaged estimator of $\ME_\pi[f]$)
$$
\bar{f}_m = \frac{1}{m} \sum_{i=1}^m f(q_i').
$$

\begin{theorem*}[Markov chain CLT, see {{\cite[Theorem 9(6)]{Jones:2004}}}]
If $X$ is a Harris ergodic Markov chain with
stationary distribution $\pi$, then 
$$
\sqrt{m}(\bar{f}_m - \ME_\pi[f])
\stackrel{d}{\longrightarrow} N(0,\sigma^2)
$$
if $X$ is uniformly ergodic and $\ME_\pi[f^2]<\infty$.
\end{theorem*}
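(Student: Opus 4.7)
The plan is to prove this Markov chain CLT via the martingale-approximation method of Gordin and Lifšic, adapted to the uniformly ergodic setting. First I would center and set $\tilde{f} := f - \ME_\pi[f] \in L^2(\pi)$. Writing $P$ for the transition kernel, uniform ergodicity is equivalent to a geometric contraction $\|P^k h - \ME_\pi[h]\|_{L^2(\pi)} \le C\rho^k \|h\|_{L^2(\pi)}$ for some $\rho \in (0,1)$, valid uniformly for $h \in L^2(\pi)$. This makes $(I-P)$ invertible on the mean-zero subspace, so the Poisson equation $(I-P)g = \tilde{f}$ admits the $L^2(\pi)$ solution $g = \sum_{k \ge 0} P^k \tilde{f}$ given by the Neumann series.

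With $g$ in hand, I would set $D_i := g(X_i) - (Pg)(X_{i-1})$. By the Markov property, $\ME[g(X_i) \mid X_0,\dots,X_{i-1}] = (Pg)(X_{i-1})$, so $(D_i)$ is a martingale-difference sequence; started from $X_0 \sim \pi$ it is stationary and ergodic with finite variance $\sigma^2 := \ME_\pi[D_1^2]$. A telescoping manipulation together with $g - Pg = \tilde{f}$ yields
\[
\sum_{i=1}^m \tilde{f}(X_i) \;=\; \sum_{i=1}^m D_i \;+\; (Pg)(X_0) - (Pg)(X_m),
\]
so the partial sum equals a martingale $M_m$ plus a boundary term bounded in $L^2(\pi)$ and thus $O_p(1)$, negligible after dividing by $\sqrt{m}$.

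The martingale CLT of Billingsley--Brown applied to the stationary ergodic square-integrable differences $(D_i)$ then gives $m^{-1/2} M_m \stackrel{d}{\longrightarrow} N(0, \sigma^2)$, and Slutsky delivers the same limit for $\sqrt{m}\bigl(\bar{f}_m - \ME_\pi[f]\bigr)$. To remove the assumption that $X_0 \sim \pi$, I would invoke uniform ergodicity once more: the law of $X_i$ converges to $\pi$ in total variation geometrically fast and uniformly in the starting state, so a standard coupling argument transfers the CLT from the stationary initial condition to an arbitrary one.

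The main obstacle is the $L^2(\pi)$ solvability of the Poisson equation: neither Harris ergodicity alone nor ergodicity together with $\ME_\pi[f^2]<\infty$ is sufficient in general. Uniform ergodicity is precisely what forces the Neumann series to converge in $L^2(\pi)$ and ensures $\sigma^2 < \infty$; once that ingredient is secured, the martingale decomposition, the martingale CLT, and the coupling to handle the initial distribution are technical but standard.
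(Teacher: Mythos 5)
There is no proof of this statement in the paper for you to match: it is quoted verbatim, with attribution, from Jones~\cite{Jones:2004} (Theorem~9, condition~(6)), and the authors use it as a black box to deduce Corollary~\ref{co:sigma-x-generic}. So the only sensible review is of your sketch as a standalone argument.

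Your martingale-approximation (Gordin--Lif\v{s}ic / Poisson-equation corrector) route is a standard and valid way to prove this CLT, and the skeleton you give is sound: centering, solving $(I-P)g=\tilde f$ via the Neumann series, the telescoping identity
$\sum_{i=1}^m \tilde f(X_i)=\sum_{i=1}^m D_i+(Pg)(X_0)-(Pg)(X_m)$,
Billingsley's stationary-ergodic martingale CLT, and a coupling step to move off the stationary start. Two caveats. First, uniform ergodicity is \emph{not} equivalent to a geometric $L^2(\pi)$-contraction; the implication you need (Doeblin's condition $\Rightarrow$ $\|P^m-\Pi\|_{L^2\to L^2}<1$, hence $L^2$ spectral gap) is true, but the converse fails, and $L^2$-geometric ergodicity is strictly weaker than uniform ergodicity --- so ``equivalent'' should read ``implies.'' Second, the final coupling step deserves a sentence more care: after coupling at a geometric-tail meeting time $T$, bounding $|S_T-\tilde S_T|$ in probability genuinely uses $f\in L^2(\pi)$ together with the exponential tail of $T$ (e.g.\ a Cauchy--Schwarz or truncation argument), not just boundedness of $T$. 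It is also worth knowing that the proof in Jones's survey for this particular sufficient condition goes through $\phi$-mixing/regeneration rather than the corrector method, but both are textbook routes to the same conclusion.
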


For our Markov chain that translates into
$
\frac{1}{\sqrt{m}} \sum_{i=1}^m f(q_i')
\stackrel{d}{\longrightarrow} N(0,\sigma^2)
$, which implies the following.

\begin{proposition}
$\Pr[f(q_1')+\dots+f(q_m') = 0]\to 0$ as $m\to \infty$.
\end{proposition}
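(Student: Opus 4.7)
The plan is to apply the Markov chain CLT just stated and exploit the fact that $S_m := \sum_{i=1}^m f(q_i')$ takes only integer values. Since $\{S_m = 0\} = \{|S_m| < 1\}$, for any $\epsilon > 0$ and any $m > 1/\epsilon^2$ one has
\[
\Pr[S_m = 0] \;\le\; \Pr\!\left[\,\bigl|S_m/\sqrt{m}\bigr| \le \epsilon\,\right].
\]
The CLT gives $S_m/\sqrt{m} \stackrel{d}{\longrightarrow} N(0, \sigma^2)$. I would note that the initial distribution $(1/6, 2/3, 1/6)$ of $\{q_i'\}$ is in fact the stationary distribution $\pi$ (as a direct solution of $\pi M = \pi$ confirms), so the hypothesis of the cited theorem is satisfied without any change-of-initial-law adjustment. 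Because $\pm\epsilon$ are continuity points of the normal CDF, the Portmanteau theorem yields
\[
\limsup_{m\to\infty} \Pr[S_m = 0] \;\le\; 2\Phi(\epsilon/\sigma) - 1,
\]
where $\Phi$ denotes the standard normal CDF, and sending $\epsilon \to 0^+$ on the right-hand side finishes the argument.

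The only nontrivial step is verifying that the asymptotic variance $\sigma^2$ is \emph{strictly} positive; otherwise the Gaussian limit degenerates to a point mass at $0$ and the $\epsilon \to 0$ step collapses. I would establish $\sigma^2 > 0$ using the standard identity
\[
\sigma^2 \;=\; \mathrm{Var}_\pi(f) + 2 \sum_{k=1}^\infty \mathrm{Cov}_\pi\!\bigl(f(X_0), f(X_k)\bigr),
\]
together with a direct computation: one finds $\mathrm{Var}_\pi(f) = 1/3$ and the one-step covariance $\mathrm{Cov}_\pi(f(X_0), f(X_1)) = 1/15$, both positive, and by uniform ergodicity the remaining covariances decay geometrically in $k$, so the series converges to a strictly positive constant. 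This is the main (and essentially only) obstacle in the proof; once $\sigma^2 > 0$ is in hand, the conclusion is a two-line application of the CLT as sketched above.
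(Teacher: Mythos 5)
Your proof is correct and uses the same underlying tool as the paper (the Markov chain CLT); however, your write-up is substantially more careful than the paper's and in fact closes two gaps the paper glosses over. The paper simply asserts that $\Pr[S_m/\sqrt{m}=0]$ ``converges to the probability that a normally distributed value equals $0$, which is zero.'' This elides two points that you handle correctly: first, convergence in distribution does not directly give convergence of point masses, and your $\epsilon$-window bound $\Pr[S_m=0]\le\Pr[|S_m/\sqrt{m}|\le\epsilon]$ followed by the Portmanteau argument is the standard way to make this step rigorous; second, the whole argument collapses if $\sigma^2=0$, a hypothesis the paper never verifies. You were right to flag that as the essential issue, and your computations $\mathrm{Var}_\pi(f)=1/3$, $\mathrm{Cov}_\pi(f(X_0),f(X_1))=1/15$ are correct.

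The one soft spot in your write-up is the jump from ``first two terms positive, geometric decay'' to ``the series converges to a strictly positive constant'' --- geometric decay alone does not preclude the tail from being negative enough to cancel the head. Here this is easily repaired, and it is worth closing cleanly: with $\pi=(1/6,2/3,1/6)$ and $f=(-1,0,1)^{T}$ one checks $Mf=\frac{1}{5}f$, so $f$ is a right eigenvector of $M$ with eigenvalue $1/5$. Hence
\[
\mathrm{Cov}_\pi\bigl(f(X_0),f(X_k)\bigr)=\sum_i\pi_i f(i)\,(M^kf)(i)=\Bigl(\tfrac{1}{5}\Bigr)^{k}\mathrm{Var}_\pi(f)=\tfrac{1}{3}\Bigl(\tfrac{1}{5}\Bigr)^{k}>0,
\]
and therefore
\[
\sigma^2=\tfrac{1}{3}+2\sum_{k\ge1}\tfrac{1}{3}\Bigl(\tfrac{1}{5}\Bigr)^{k}=\tfrac{1}{3}+\tfrac{2}{3}\cdot\tfrac{1/5}{4/5}=\tfrac{1}{2}>0,
\]
which finishes the verification you correctly identified as the crux.
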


\begin{proof}
$\Pr[\sum_{i=1}^m f(q_i') = 0] = \Pr[\frac{1}{\sqrt{m}} \sum_{i=1}^m f(q_i') = 0]$
which, by the previous theorem, converges to the probability
that a normally distributed value equals $0$, which is zero.
\end{proof}

\begin{corollary}\label{co:sigma-x-generic}
$P_m[\Set{w\in S_m}{\sigma_x(w)=0}] \to 0 \mbox{ as } m\to \infty.$
\end{corollary}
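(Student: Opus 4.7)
The plan is to deduce the corollary directly from the preceding proposition by identifying the uniform distribution $P_m$ on the sphere $S_m$ with the distribution produced by $m$ steps of the Markov chain $\Gamma$, and then showing that $\sigma_x$ is invariant under the folding that produces $\Gamma^\ast$.

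First I would verify that $P_m$ coincides with the joint distribution of $(q_1,\dots,q_m)$ under $\Gamma$. Because a freely reduced word of length $m$ over $\{a^\pm,t^\pm,x^\pm\}$ is exactly a sequence $(q_1,\dots,q_m)$ with $q_i\in\{a^\pm,t^\pm,x^\pm\}$ and $q_{i+1}\neq q_i^{-1}$, the cardinality of $S_m$ is $6\cdot 5^{m-1}$, and every element of $S_m$ is produced by $\Gamma$ with probability $\tfrac{1}{6}\cdot\tfrac{1}{5^{m-1}}$. Hence the law of $q_1\cdots q_m$ under $\Gamma$ is exactly $P_m$.

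Next, I would observe that the quantity $\sigma_x(w)$ depends only on the positions of the letters $x$ and $x^{-1}$ in $w$, and is invariant under any relabeling of the remaining letters. Therefore, merging the four states $a^\pm,t^\pm$ into a single state $y$ to obtain $\Gamma^\ast$ and the reduced sequence $(q'_1,\dots,q'_m)$ does not affect $\sigma_x$: by \eqref{eq:sigma_x} and \eqref{eq:sigma_x2} we have $\sigma_x(q_1\cdots q_m)=\sigma_x(q'_1\cdots q'_m)=f(q'_1)+\cdots+f(q'_m)$.

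Combining these two steps gives
\[
P_m[\{w\in S_m:\sigma_x(w)=0\}]=\Pr\!\left[f(q'_1)+\cdots+f(q'_m)=0\right],
\]
and the previous proposition shows that the right-hand side tends to $0$ as $m\to\infty$. The only nontrivial point is the distributional identification in the first step; once that is in place, everything else is a direct translation of the CLT-based conclusion already established.
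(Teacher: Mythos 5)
Your proposal is correct and follows the same route the paper implicitly relies on (the paper gives no explicit proof for this corollary): you make explicit the identification of the uniform distribution $P_m$ with the law of the first $m$ steps of the Markov chain $\Gamma$, and the invariance of $\sigma_x$ under the folding $a^\pm,t^\pm\mapsto y$, which the paper registers through \eqref{eq:sigma_x} and \eqref{eq:sigma_x2}. Nothing is missing; if anything your write-up is slightly more careful than the paper's in stating the cardinality $|S_m|=6\cdot 5^{m-1}$.
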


\begin{corollary}
$\DP_1(L_2)$ has $O(|w|^2 \log |w| \log \log |w|)$ generic-case time complexity.
\end{corollary}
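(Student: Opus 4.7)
The plan is to combine the two preceding results: Proposition \ref{Prop:algorithm-equations} supplies a concrete algorithm whose running time is $O(|w|^2 \log |w| \log \log |w|)$ on every input with $\sigma_x(w)\ne 0$, while Corollary \ref{co:sigma-x-generic} identifies the subset $\Set{w}{\sigma_x(w)\ne 0}$ as generic in $F(a,t,x)$. So the statement is essentially the formal assembly of these two ingredients into the language of generic-case complexity from \cite[Chapter 10]{MSU_book:2011}.

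First I would define the \emph{partial} decision procedure $\mathcal{A}$ for $\DP_1(L_2)$ as follows. On input $w=w(a,t,x)$, compute $\sigma_x(w)$ in $O(|w|)$ time by a single pass through $w$. If $\sigma_x(w)\ne 0$, invoke the algorithm from Proposition \ref{Prop:algorithm-equations}, which decides solvability of $w=1$ in $O(|w|^2 \log |w| \log \log |w|)$ time. If $\sigma_x(w)=0$, have $\mathcal{A}$ output a distinguished symbol $?$ (``don't know''). By construction, whenever $\mathcal{A}$ returns a definite answer, that answer is correct, and $\mathcal{A}$ never returns $?$ on any instance with $\sigma_x(w)\ne 0$.

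Next, I would verify the two conditions in the definition of generic-case time complexity. Let $S = \Set{w\in F(a,t,x)}{\sigma_x(w)\ne 0}$. Corollary \ref{co:sigma-x-generic} gives
\[
P_m[S_m\setminus S] = P_m[\Set{w\in S_m}{\sigma_x(w)=0}]\to 0,
\]
so $P_m(S\cap S_m)/|S_m|\to 1$, i.e.\ $S$ is generic in $F(a,t,x)$. On every input $w\in S$, the algorithm $\mathcal{A}$ halts within the preprocessing cost $O(|w|)$ plus the cost of Proposition \ref{Prop:algorithm-equations}, which sums to $O(|w|^2 \log |w| \log \log |w|)$; on inputs outside $S$ it halts in $O(|w|)$ time with output $?$. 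Thus $\mathcal{A}$ is a partial algorithm that correctly decides $\DP_1(L_2)$ on a generic subset of $F(a,t,x)$ and runs within the claimed time bound on that subset, which is exactly the assertion that $\DP_1(L_2)$ has generic-case complexity $O(|w|^2 \log |w| \log \log |w|)$.

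There is no serious obstacle here: all the technical work has been done in Proposition \ref{Prop:algorithm-equations} (correctness and complexity of the algorithm on the $\sigma_x(w)\ne 0$ case) and in Corollary \ref{co:sigma-x-generic} (genericity of that case via the Markov chain CLT). The only point worth a sentence of care is to note that $\sigma_x(w)$ can be read off in linear time, so gating the main algorithm on the condition $\sigma_x(w)\ne 0$ adds negligible overhead and preserves both correctness and the stated asymptotic bound.
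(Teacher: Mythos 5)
Your proposal is correct and follows essentially the same approach as the paper: invoke Proposition \ref{Prop:algorithm-equations} for the running time on inputs with $\sigma_x(w)\ne 0$, and invoke Corollary \ref{co:sigma-x-generic} to show that set is generic. The additional detail about the partial algorithm outputting ``?'' and the linear-time gate on $\sigma_x(w)$ simply makes explicit what the paper leaves implicit in its appeal to the definition of generic-case complexity.
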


\begin{proof}
By Proposition \ref{Prop:algorithm-equations}
there is an algorithm that for a given equation $w=1$
decides if it has a solution in $O(|w|^2 \log |w| \log \log |w|)$ 
By Corollary \ref{co:sigma-x-generic}, the set of words
$\Set{w\in F(a,t,x)}{\sigma_x(w)\ne 0}$ is generic in $F(a,t,x)$.
\end{proof}

\section{Division-by-$f$ automaton}
\label{se:division-automaton}

Consider a polynomial $f\in\MZ_2[z]$.
Let $n=\deg(f)$.
The \emph{division-by-$f$ automaton} is a finite state automaton (FSA)
$\Gamma_f$ over the binary alphabet $\Sigma=\{0,1\}$
defined as follows.
\begin{itemize}
\item 
The set of states $S$ is the set of polynomials of degree less than $n$.
\item 
$0\in S$ is the starting and the accepting state.
\item 
The state transitions are defined by 
\begin{itemize}
\item 
$g\ \stackrel{0}{\longrightarrow}\ g\cdot z\ \%\ f$,
\item 
$g\ \stackrel{1}{\longrightarrow}\ g\cdot z+1\ \%\ f$,
\end{itemize}
for every $g\in S$, where $\%$ denotes the operation of 
computing the remainder of division.
\end{itemize}
Note that by definition, $\Gamma_f$ has a transition 
$s\stackrel{b}{\longrightarrow}t$ for some $s,t\in S$ and $b=0,1$
if and only if $t\equiv_f s\cdot z+b$.
Figure \ref{fi:Gamma-z3-z-1} shows $\Gamma_f$ for $f=z^3+z+1$.

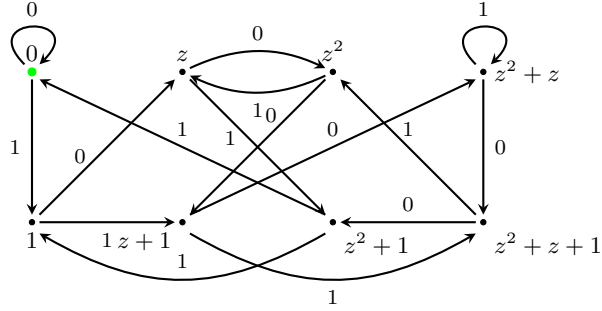
\begin{figure}[h]
\centering
\begin{tikzpicture}[>=stealth][scale=1.2]
\filldraw[black] (0,0) circle (1pt) node[below][black]{\scriptsize $1$};
\filldraw[black] (2,0) circle (1pt) node[anchor=north east][black]{\scriptsize $z+1$};
\filldraw[black] (4,0) circle (1pt) node[anchor=north west][black]{\scriptsize $z^2 +1 $};
\filldraw[black] (6,0) circle (1pt) node[anchor=north west][black]{\scriptsize $z^2 + z+1$};

\filldraw[green] (0,2) circle (1.5pt) node[above][black]{\scriptsize $0$};
\filldraw[black] (2,2) circle (1pt) node[above][black]{\scriptsize $z$};
\filldraw[black] (4,2) circle (1pt) node[above][black]{\scriptsize $z^2$};
\filldraw[black] (6,2) circle (1pt) node[right][black]{\scriptsize $z^2 +z$};

\draw[->, thick] [black] (0.1,0)--(1.9,0) node[midway, below][black]{\tiny $1$};
\draw[->, thick] [black] (0,1.9)--(0,0.1) node[midway, left][black]{\tiny $1$};
\draw[<-, thick] [black] (6,0.1)--(6,1.9) node[midway, right][black]{\tiny $0$};
\draw[->, thick] [black] (5.9,0)--(4.1,0) node[midway, above][black]{\tiny $0$};

\draw[->, thick] (-0.1,2.1) to[out=135, in=45, looseness=12] node[midway, above][black]{\tiny $0$} (0.1,2.1);
\draw[->, thick] (5.9,2.1) to[out=135, in=45, looseness=12] node[midway, above][black]{\tiny $1$} (6.1,2.1);

\draw[->, thick] [black] (0.1,0.1)--(1.9,1.9) node[pos=0.3, above][black]{\tiny $0$};
\draw[->, thick] [black] (2.1,0.1)--(5.9,1.9) node[midway, above][black]{\tiny $0$};
\draw[->, thick] [black] (3.9,1.9)--(2.1,0.1) node[pos=0.4, above][black]{\tiny $0$};
\draw[->, thick] [black] (3.9,0.1)--(0.1,1.9) node[midway, above][black]{\tiny $1$};
\draw[->, thick] [black] (2.1,1.9)--(3.9,0.1) node[pos=0.3, below][black]{\tiny $1$};
\draw[->, thick] [black] (5.9,0.1)--(4.1,1.9) node[midway, above][black]{\tiny $1$};

\draw[->, thick] (2.1,2.05) to[out=25, in=155] node[midway, above][black]{\tiny $0$} (3.9,2.05);
\draw[->, thick] (3.9,1.95) to[out=-155, in=-25] node[midway, below][black]{\tiny $1$} (2.1,1.95);

\draw[->, thick] (2.1,-0.15) to[out=-30, in=-150, looseness=1.1] node[midway, below][black]{\tiny $1$} (5.9,-0.15);
\draw[->, thick] (3.9,-0.15) to[out=-150, in=-30, looseness=1.1] node[midway, above][black]{\tiny $1$} (0.1,-0.15);

\end{tikzpicture}
\caption{$\Gamma_f$ for $f=z^3+z+1$.}
\label{fi:Gamma-z3-z-1}
\end{figure}

\begin{lemma}\label{le:div-automaton}
The sequence of transitions 
$
s_0
\stackrel{g_m}{\longrightarrow}s_1
\stackrel{g_{m-1}}{\longrightarrow}s_2
\stackrel{g_{m-1}}{\longrightarrow}
\dots
\stackrel{g_{1}}{\longrightarrow}s_m
\stackrel{g_0}{\longrightarrow}s_{m+1}
$
in $\Gamma_f$
that starts at the state $s_0$ and is labeled with 
$g_m\dots g_0$ leads to the state 
$$
s_{m+1} = (s_0 z^{m+1} + g_m z^m + g_{m-1} z^{m-1} + \dots + g_1 z + g_0) \,\%\, f,
$$
or, equivalently, the following holds:
$$
s_0 z^{m+1} + g_m z^m + g_{m-1} z^{m-1} + \dots + g_1 z + g_0 + s_{m+1} \equiv_f 0.
$$
\end{lemma}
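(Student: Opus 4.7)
The plan is a straightforward induction on $m$, the length of the input word (minus one), using the definition of the transitions of $\Gamma_f$ together with the ring-theoretic fact that $(a \,\%\, f) \cdot z + b \equiv_f a \cdot z + b$ for all $a, b \in \MZ_2[z]$.

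For the base case $m = 0$, the sequence consists of a single transition $s_0 \stackrel{g_0}{\longrightarrow} s_1$. By the definition of the transitions, $s_1 = (s_0 \cdot z + g_0)\,\%\, f$, which is exactly the claimed formula $s_1 = (s_0 z^{0+1} + g_0)\,\%\, f$.

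For the inductive step, suppose the claim holds for sequences of length $m$, so that after reading $g_m g_{m-1} \dots g_1$ from $s_0$, the automaton is in state
\[
s_m = (s_0 z^m + g_m z^{m-1} + \dots + g_2 z + g_1)\,\%\, f.
\]
Reading the additional symbol $g_0$, the definition of transitions gives
\[
s_{m+1} = (s_m \cdot z + g_0)\,\%\, f.
\]
Substituting the inductive hypothesis and using that taking $\%\, f$ commutes with multiplication by $z$ and addition modulo $f$, one obtains
\[
s_{m+1} = \bigl(s_0 z^{m+1} + g_m z^m + \dots + g_1 z + g_0\bigr)\,\%\, f,
\]
which is the claimed formula. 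The equivalent formulation at the end follows immediately: if $r = a \,\%\, f$, then $a \equiv_f r$, hence $a - r \equiv_f 0$, and since we work in characteristic $2$ we have $-r = r$, giving $a + r \equiv_f 0$.

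The only step that requires a small verification is the commutation of $\%\, f$ with the operations, i.e., the identity $(a \,\%\, f) \cdot z + b \equiv_f a \cdot z + b$; this is immediate from $a \equiv_f a \,\%\, f$. There is no genuine obstacle here — the lemma is essentially just unwinding the recursive definition of the transitions of $\Gamma_f$, analogous to the standard proof that a finite automaton for divisibility by an integer tracks the remainder of the number read so far.
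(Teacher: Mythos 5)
Your proof is correct and follows essentially the same induction-on-$m$ argument as the paper: apply the transition definition once, invoke the inductive hypothesis for the first $m$ steps, and use that reduction mod $f$ is compatible with multiplication by $z$ and addition. The extra remarks about characteristic $2$ and the commutation of $\%$ with the ring operations are harmless elaborations of what the paper leaves implicit.
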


\begin{proof}
Induction on $m$.
By definition of transitions, the statement is true for $m=0$.
Suppose that the statement is true for every bit string of length
smaller than $m+1$.
Hence, for the given sequence of transitions, we have
$$
s_{m} \equiv_f s_0 z^{m} + g_m z^{m-1} + \dots + g_1,
$$
which implies that
\begin{align*}
s_{m+1} \equiv_f s_{m}z+g_0 &\equiv_f (s_0 z^{m} + g_m z^{m-1} + \dots + g_1)z +g_0\\
&= s_0 z^{m+1} + g_m z^m + \dots + g_1 z + g_0.
\end{align*}
\end{proof}

We say that $\Gamma_f$ \emph{accepts} a bit string $g_m\dots g_0$
if there is a sequence of transitions in $\Gamma_f$
from the starting state $0$ to the final state $0$
$$
0=s_0
\stackrel{g_m}{\longrightarrow}s_1
\stackrel{g_{m-1}}{\longrightarrow}s_2
\stackrel{g_{m-1}}{\longrightarrow}
\dots
\stackrel{g_{1}}{\longrightarrow}s_m
\stackrel{g_0}{\longrightarrow}s_{m+1}=0.
$$

\begin{corollary}
$\Gamma_f$ accepts $g_m\dots g_0$ $\ \ \Leftrightarrow\ \ $
$f \mid g_m z^m + g_{m-1} z^{m-1} + \dots + g_1 z + g_0$.
\end{corollary}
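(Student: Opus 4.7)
The plan is to derive this corollary as an immediate instantiation of Lemma~\ref{le:div-automaton}. By the definition of acceptance, $\Gamma_f$ accepts the string $g_m\dots g_0$ if and only if there is a run
$$
0 = s_0
\stackrel{g_m}{\longrightarrow} s_1
\stackrel{g_{m-1}}{\longrightarrow} \dots
\stackrel{g_0}{\longrightarrow} s_{m+1} = 0
$$
in $\Gamma_f$, i.e., a run starting and ending at the zero state.

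First I would apply Lemma~\ref{le:div-automaton} with $s_0 = 0$. The lemma then tells us that after reading $g_m g_{m-1} \dots g_0$, the automaton ends in the state
$$
s_{m+1} = \bigl(0 \cdot z^{m+1} + g_m z^m + g_{m-1} z^{m-1} + \dots + g_1 z + g_0\bigr)\,\%\,f
= (g_m z^m + \dots + g_0)\,\%\,f.
$$
Since $s_{m+1}$ is uniquely determined by the input string (the transitions are deterministic), acceptance is equivalent to the condition $s_{m+1} = 0$. By the equivalence between vanishing of the remainder and divisibility in $\MZ_2[z]$, this is in turn equivalent to
$$
f \mid g_m z^m + g_{m-1} z^{m-1} + \dots + g_1 z + g_0,
$$
which is exactly the claim.

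There is no real obstacle here: the corollary is a direct specialization of Lemma~\ref{le:div-automaton} to the case where the initial state is $0$ and we read off the condition that the terminal state be $0$ as well. The only point worth emphasizing is the determinism of the automaton (each state has exactly one outgoing transition per input bit), which ensures that the run is unique and that ``acceptance'' can be characterized purely by the equation $s_{m+1}=0$.
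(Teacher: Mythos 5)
Your proof is correct and follows exactly the intended derivation: the paper states this corollary without proof precisely because it is the immediate specialization of Lemma~\ref{le:div-automaton} to $s_0 = 0$ together with the observation that acceptance means $s_{m+1} = 0$, i.e.\ zero remainder, i.e.\ divisibility.
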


We say that a FSA $M$ is \emph{strongly connected} if for any states 
$s_i, s_j$ there is a sequence of transitions from $s_i$ to $s_j$.

\begin{lemma}
$\Gamma_f$ is strongly connected.
\end{lemma}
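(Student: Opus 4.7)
The plan is to use Lemma~\ref{le:div-automaton} to turn the question of reachability in $\Gamma_f$ into a solvability question in $\MZ_2[z]$, which becomes trivial because we are working over the two-element field.

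Fix $n=\deg(f)$ and let $s_0$, $s_j$ be two arbitrary states of $\Gamma_f$, i.e., polynomials in $\MZ_2[z]$ of degree less than $n$. I want to exhibit a bit string of length $n$ that drives $\Gamma_f$ from $s_0$ to $s_j$. By Lemma~\ref{le:div-automaton}, reading a bit string $g_{n-1}\dots g_0$ starting at state $s_0$ leads to the state
$$
s_n = \bigl(s_0\cdot z^{n} + g_{n-1}z^{n-1} + \dots + g_1 z + g_0\bigr)\,\%\, f.
$$
Thus reaching $s_j$ is equivalent to exhibiting a polynomial $h(z)=g_{n-1}z^{n-1}+\dots+g_0$ of degree less than $n$ with coefficients in $\{0,1\}$ satisfying $h \equiv_f s_j - s_0 z^{n}$.

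Set $h(z) := \bigl(s_j + s_0 z^{n}\bigr)\,\%\, f$. Since we are working over $\MZ_2$, addition and subtraction coincide, so $h \equiv_f s_j - s_0 z^{n}$, as required. Moreover $\deg(h)<n$ and, crucially, every coefficient of $h$ lies in $\MZ_2=\{0,1\}$, so $h$ is automatically realizable as an $n$-bit input string (padding with leading zeros if $\deg(h)<n-1$). The bit string corresponding to $h$ therefore drives $\Gamma_f$ from $s_0$ to $s_j$, establishing strong connectivity.

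The only point that requires any thought is that the bit-string encoding and the polynomial encoding must match, and this is where working over $\MZ_2$ is essential: over a larger coefficient ring the remainder $(s_j-s_0 z^n)\%f$ need not have $\{0,1\}$-valued coefficients and the argument would fail. Since no such obstacle arises here, no further work is needed.
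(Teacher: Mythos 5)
Your proof is correct and rests on the same key observation as the paper, namely the characterization from Lemma~\ref{le:div-automaton} that a length-$n$ read from state $s_0$ lands on $(s_0 z^{n}+h)\,\%\,f$, where $h$ is the degree-$<n$ polynomial encoded by the bit string. The paper routes every path through the state $0$ (first $0\to g$, then $g\to 0$); you simply unify the two into a single length-$n$ path $s_0\to s_j$ by choosing $h=(s_j+s_0 z^{n})\,\%\,f$, a mild streamlining of the same argument.
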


\begin{proof}
Pick any $g=g_{n-1}z^{n-1}+\dots+g_1z+g_0 \in S$
and consider the sequence of transitions
from $0$
$$
0=s_0
\stackrel{g_{n-1}}{\longrightarrow}s_1
\stackrel{g_{n-2}}{\longrightarrow}s_2
\stackrel{g_{n-3}}{\longrightarrow}
\dots
\stackrel{g_{1}}{\longrightarrow}s_{n-1}
\stackrel{g_0}{\longrightarrow}s_{n}.
$$
By Lemma \ref{le:div-automaton},
$s_{n}\in S$ is the remainder of division of
$$
g_{n-1}z^{n-1}+\dots+g_1z+g_0
$$
by $f$, which is $g$.
Hence, for any $g\in S$ there is a path from the initial state $0$ to $g$.

Conversely, let $g_{n-1}'z^{n-1}+\dots+g_1'z+g_0'$ be the remainder of
division of $g\cdot z^{n}$ by $f$. Then
$g\cdot z^{n} + g_{n-1}'z^{n-1}+\dots+g_1'z+g_0' \equiv_f 0$ and,
by Lemma \ref{le:div-automaton},
the sequence of transitions from $g$
$$
g=s_0
\stackrel{g_{n-1}'}{\longrightarrow}s_1
\stackrel{g_{n-2}'}{\longrightarrow}s_2
\stackrel{g_{n-3}'}{\longrightarrow}
\dots
\stackrel{g_{1}'}{\longrightarrow}s_{n-1}
\stackrel{g_0'}{\longrightarrow}s_{n}
$$
leads to $s_n=0$.
\end{proof}

Recall that a FSA over an alphabet $\Sigma$ 
is \emph{deterministic} if it has a unique
starting state and for every state $s$ and every $x\in\Sigma$
there is at most one transition that starts at $s$ labeled with $x$.
By construction, $\Gamma_f$ is deterministic. 
Furthermore, the following holds.

\begin{lemma}\label{le:unique-in}
If $f\not\equiv_z 0$, then 
for every $g\in V$ and every $x\in\{0,1\}$ there exists
a unique transition in $\Gamma_f$ labeled with $x$ that ends at $g$.
\end{lemma}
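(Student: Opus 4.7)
The plan is to characterize incoming transitions algebraically and then exploit the hypothesis $f\not\equiv_z 0$ to run an injectivity argument on a finite set. First I would unfold the definition: a transition $s\xrightarrow{x} g$ exists in $\Gamma_f$ if and only if $g \equiv_f s\cdot z + x$, where $g,s\in S$ (i.e.\ $\deg(g),\deg(s)<n$). Uniqueness of an $s$ with this property, for every fixed $(g,x)$, is exactly the statement to prove.

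Next I would package this as a map. Define
\[
\phi_x\colon S\to S,\qquad \phi_x(s) = (s\cdot z + x)\ \%\ f.
\]
This is well defined because division with remainder by $f$ produces a unique representative of degree less than $n$. The conclusion of the lemma is equivalent to $\phi_x$ being a bijection from $S$ to $S$: existence of an incoming $x$-transition at $g$ says $g\in\operatorname{im}(\phi_x)$, and uniqueness says $\phi_x$ is injective on its preimage of $g$. Since $|S|=2^n$ is finite, it suffices to prove $\phi_x$ is injective.

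For injectivity, suppose $\phi_x(s_1) = \phi_x(s_2)$. Then $s_1 z + x \equiv_f s_2 z + x$, so $f \mid (s_1 - s_2)\cdot z$ in $\MZ_2[z]$. This is the step that uses the hypothesis: $f\not\equiv_z 0$ means $f$ has nonzero constant term, i.e.\ $z\nmid f$, which in $\MZ_2[z]$ gives $\gcd(f,z)=1$ because $z$ is irreducible. Hence $f\mid (s_1-s_2)$. Since both $s_1$ and $s_2$ have degree strictly less than $n=\deg(f)$, the only multiple of $f$ available is $0$, so $s_1=s_2$.

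The only real obstacle is articulating the coprimality $\gcd(f,z)=1$ from $f\not\equiv_z 0$; everything else is a pigeonhole/degree argument on a finite-dimensional quotient ring. I would also briefly note the symmetric fact (implicit in the lemma) that without this hypothesis the conclusion can fail: if $f(0)=0$ then $\phi_0$ sends both $0$ and any $s$ with $f\mid s\cdot z$ to $0$, destroying uniqueness.
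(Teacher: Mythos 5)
Your proof is correct and rests on exactly the same algebraic fact as the paper's: from $f\not\equiv_z 0$ one gets $\gcd(f,z)=1$, so that multiplication by $z$ is invertible in $\MZ_2[z]/(f)$, and hence the congruence $s\cdot z + x\equiv_f g$ has a unique solution $s$. The paper simply asserts this unique solvability directly, whereas you unfold it by showing $\phi_x$ is injective (via cancellation of $z$) and then appealing to finiteness of $S$ to upgrade injectivity to bijectivity; this is a minor re-packaging of the same idea rather than a different proof.
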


\begin{proof}
The assumption that $f\not\equiv_z 0$
implies that $\gcd(f,z)=1$ which ensures that the equations 
$$
h_0 \cdot z \equiv_f g
\ \ \mbox{ and }\ \ 
h_1\cdot z+1 \equiv_f g
$$
have unique solutions for $h_0$ and $h_1$ respectively.
Hence, $\Gamma_f$ has unique transitions
$h_0\ \stackrel{0}{\to}\ g$ and
$h_1\ \stackrel{1}{\to}\ g$
labeled with $0$ and $1$ that end at $g$.
\end{proof}

\subsection{Periodicity of division}

It follows from Lemma \ref{le:div-automaton}
that the sequence of intermediate states 
$s_0,\dots, s_{m+1}$ in the transition chain
$
s_0
\stackrel{g_m}{\longrightarrow}s_1
\stackrel{g_{m-1}}{\longrightarrow}s_2
\stackrel{g_{m-2}}{\longrightarrow}
\dots
\stackrel{g_{1}}{\longrightarrow}s_m
\stackrel{g_0}{\longrightarrow}s_{m+1}
$
is the sequence of successive remainders obtained by the long division 
of the polynomial
$s_0 z^{m+1} + g_m z^m + \dots + g_1 z + g_0$ by $f$.
Informally, we say that division by $f$ is periodic if 
the sequence of states (remainders) is periodic.

Formally, we say that a polynomial $g$ is \emph{$p$-periodic} if the sequence of
its coefficients $g_m,\dots,g_0$
satisfies $g_{i}=g_{i+p}$ for every $0\le i\le m-p$.
We do not require $p$ to be the minimal value satisfying that property.
In this section we prove that division of a $p$-periodic polynomial
is $pP$-periodic for some constant $P$ that depends only on $f$.

A more technical result that implies periodicity of division 
can be formulated as follows.
We claim that for divisor $f$ satisfying $f\not\equiv_z 0$, 
there exists $P=P_f\in\MN$ such that for every $s_0\in \Gamma_f$
and every bit string $w$ the transition chain that starts at $s_0$
labeled with $w^P$ ends at $s_0$.
Therefore, division of any $p$-periodic polynomial produces 
$pP$-periodic result.

First, let us introduce new symbols $0^{-1}, 1^{-1}$ to serve as 
formal inverses for
the symbols $0,1$ and consider the free group $F(0,1)$ of reduced
group-words over the alphabet $\Sigma=\{0,1\}$.
By Lemma \ref{le:unique-in}, $\Gamma_f$ is \emph{folded}
(\cite[Defintion 2.3]{Kapovich_Miasnikov:2002}).
Hence, $\Gamma_f$ can be viewed as a \emph{subgroup graph} 
(see \cite[Definition 5.3]{Kapovich_Miasnikov:2002}).
Denote by $H_f$ the subgroup of $F(0,1)$ defined by $\Gamma_f$.

\begin{proposition}\label{pr:periodic-division}
For every $f\not\equiv_z 0$ there exists $P=P_f\in\MN$ such that
for every $w\in\{0,1\}^\ast$ and every $s_0\in V(\Gamma_f)$
the transition chain in $\Gamma_f$ starting at $s_0$ and labeled by $w^P$
returns to $s_0$.
\end{proposition}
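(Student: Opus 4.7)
The plan is to show that each letter of the input alphabet acts as a \emph{permutation} of the finite state set of $\Gamma_f$, and then to take $P$ to be the order (or exponent) of the symmetric group on that state set. Writing $S = V(\Gamma_f)$ and defining $\phi_0(g) = gz \,\%\, f$ and $\phi_1(g) = (gz+1) \,\%\, f$, the transition function of $\Gamma_f$ on label $x \in \{0,1\}$ is precisely $\phi_x : S \to S$. By construction, $\Gamma_f$ is deterministic, so $\phi_x$ is a well-defined total function; and Lemma \ref{le:unique-in}, which is exactly where the hypothesis $f \not\equiv_z 0$ enters, guarantees that every state has a unique predecessor under label $x$, so $\phi_x$ is injective. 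Since $|S| = 2^n$ is finite, injectivity forces $\phi_x$ to be a bijection, i.e., an element of $\mathrm{Sym}(S)$.

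For an arbitrary word $w = w_1 w_2 \cdots w_k \in \{0,1\}^*$, the endpoint of the transition chain labeled $w$ starting at $s_0$ is $(\phi_{w_k} \circ \cdots \circ \phi_{w_1})(s_0)$, so setting $\phi_w := \phi_{w_k} \circ \cdots \circ \phi_{w_1} \in \mathrm{Sym}(S)$ gives a well-defined group element with $\phi_{w^j} = \phi_w^{\,j}$ for every $j \in \MN$. Taking any $P$ divisible by the exponent of $\mathrm{Sym}(S)$ --- for example $P = (2^n)!$, or more economically $P = \lcm(1,2,\ldots,(2^n)!/\text{something})$, or simply the exponent $\lcm(1,2,\ldots,2^n)$ of $\mathrm{Sym}(S)$ --- one obtains $\phi_w^{\,P} = \id_S$ for \emph{every} $w$ simultaneously. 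Consequently, the transition chain labeled $w^P$ sends every state $s_0 \in S$ back to itself, which is exactly the claim.

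The proof is essentially elementary once one notices the bijectivity of each $\phi_x$; the only subtle point worth flagging is that determinism alone gives only a function, and it is Lemma \ref{le:unique-in} --- relying on $\gcd(f,z) = 1$, which is equivalent to $f \not\equiv_z 0$ over $\MZ_2$ --- that upgrades this function to a permutation. Without that hypothesis one of the $\phi_x$ could fail to be surjective (e.g., $\phi_0$ would not hit any $g$ with $g(0) \ne 0$), the group-theoretic order argument would collapse, and the periodicity statement would fail; so the hypothesis is used in exactly the right place.
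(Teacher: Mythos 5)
Your proof is correct, and it takes a genuinely more elementary route than the paper. The paper invokes the machinery of subgroup graphs from Kapovich--Miasnikov: it observes that $\Gamma_f$ is folded and regular, interprets it as the Stallings graph of a finite-index subgroup $H_f\le F(0,1)$, forms the normal core $N=\bigcap_c c^{-1}H_f c$ by taking a connected component of the product graph $\Gamma_{f,v_1}\times\cdots\times\Gamma_{f,v_{2^n}}$, and then applies Lagrange's theorem to the finite quotient $G\simeq F(0,1)/N$, setting $P_f=|G|$. Underneath all of that, the paper is doing exactly what you do directly: $F(0,1)$ acts on the finite state set $S=V(\Gamma_f)$, each generator acts invertibly (your Lemma~\ref{le:unique-in} observation), so the image of $F(0,1)$ in $\mathrm{Sym}(S)$ is a finite group and any $P$ killing that image works. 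Your version is cleaner and avoids the Stallings-graph apparatus entirely. The one thing you give up is the size of $P$: the exponent $\lcm(1,\dots,2^n)$ of $\mathrm{Sym}(S)$ (or $(2^n)!$) is doubly exponential in $n$, whereas the paper's $P_f=|V(\Gamma^\ast)|$ is bounded by $4^n$ via Lemma~\ref{le:P-bound}, and that sharper bound is what propagates into the quantitative estimate of Theorem~\ref{th:delta-bound}. Decidability is unaffected, but the witness bound would degrade from roughly $2^{|w|^2/2}$ to something doubly exponential if one replaced the paper's $P_f$ with yours. If you want the best of both worlds, you can patch your argument by taking $P=\bigl|\langle\phi_0,\phi_1\rangle\bigr|$ rather than the exponent of the full symmetric group; that is precisely the group whose Cayley graph the paper's product-graph construction builds, and it recovers the $4^n$ bound.
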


\begin{proof}
By construction, $\Gamma_f$ is regular 
(\cite[Defintion 8.1]{Kapovich_Miasnikov:2002}).
Hence, by \cite[Proposition 8.3]{Kapovich_Miasnikov:2002},
$\Gamma_f$ defines a finite index subgroup $H_f$ of $F(0,1)$, 
which is not normal in general
(e.g., $H_{z^2+1}$ is not normal by 
\cite[Theorem 8.14]{Kapovich_Miasnikov:2002}, see Figure \ref{fi:Gamma-z2-1}),
which can be normalized using the following procedure. 

Let $V(\Gamma_f)=\{v_1,\dots,v_{2^n}\}$.
For a vertex $v_i\in V(\Gamma_f)$,
denote by $\Gamma_{f,v_i}$ a copy of $\Gamma_f$ in which the root 
is set at $v_i$. 
Compute the \emph{product graph}
(see \cite[Definition 9.1]{Kapovich_Miasnikov:2002})
$$
\Gamma_{f,v_1}\times 
\Gamma_{f,v_2}\times 
\dots
\times \Gamma_{f,v_{2^n}};
$$
take its connected component containing $\ovv^\ast=(v_1,\dots,v_{2^n})$,
and designate $\ovv^\ast$ as the root.
The obtained graph $\Gamma^\ast$
defines the following normal subgroup of $F(0,1)$:
$$
N=\bigcap_{c\in F(0,1)} c^{-1} H_f c,
$$
which has finite index in $F(0,1)$.
Hence, $\Gamma^\ast$ is the Cayley graph of a finite group $G\simeq F(0,1)/N$. 
Set $P=|G|=|V(\Gamma^\ast)|$. By Lagrange theorem 
for every $g\in G$, $g^P=1$. Hence, for every $w=w(0,1)$,
the transition chain in $\Gamma^\ast$ 
starting at any $s_0'\in V(\Gamma^\ast)$ and labeled with $w^P$
returns to $s_0'$.

Finally, there is a natural subgroup graph epimorphism
$\varphi:\Gamma^\ast\to \Gamma_f$ that maps the root
$\ovv^\ast\in V(\Gamma^\ast)$ to the root $0 \in V(\Gamma_f)$.
Pick any $s_0\in V(\Gamma_f)$ and any $s_0'\in V(\Gamma^\ast)$
satisfying $\varphi(s_0')=s_0$. As we proved above,
for every $w=w(0,1)$
the transition chain in $\Gamma^\ast$ that starts at 
$s_0'\in V(\Gamma^\ast)$ and is labeled with $w^P$ ends at $s_0'$.
Apply $\varphi$ to that transition chain and obtain a chain
from $s_0$ to $s_0$ labeled with $w^P$ in $\Gamma_f$.
\end{proof}

\begin{figure}[h]
\centering
\begin{minipage}{0.4\textwidth}
\centering
\begin{tikzpicture}[scale=1.2]
\filldraw[black] (0,0) circle (1pt) node[left][black]{\scriptsize $1$};
\filldraw[green] (0,2) circle (1.5pt) node[left][black]{\scriptsize $0$};
\filldraw[black] (2,0) circle (1pt) node[right][black]{\scriptsize $z +1 $};
\filldraw[black] (2,2) circle (1pt) node[right][black]{\scriptsize $z $};

\draw[->, thick] [black] (0,1.9)--(0,0.1) node[midway, left][black]{\tiny $1$};
\draw[->, thick] [black] (0.1,0)--(1.9,0) node[midway, below][black]{\tiny $1$};
\draw[->, thick] [black] (1.9,2)--(0.1,2) node[midway, above][black]{\tiny $1$};
\draw[->, thick] [black] (2,0.1)--(2,1.9) node[midway, right][black]{\tiny $1$};

\draw[<->, double] [black] (0.1,0.1)--(1.9,1.9) node[midway, left][black]{\tiny $0$} node[midway, right][black]{\tiny $0$};

\draw[->, thick] (-0.1,2.1) to[out=135, in=45, looseness=10] node[midway, above][black]{\tiny $0$} (0.1,2.1);
\draw[->, thick] (2.1,-0.1) to[out=-45, in=-135, looseness=10] node[midway, below][black]{\tiny $0$} (1.9,-0.1);

\end{tikzpicture}
\end{minipage}
\hspace{1cm}
\begin{minipage}{0.4\textwidth}
\begin{tikzpicture}[scale=0.95]
\filldraw[black] (0,0) circle (1pt) node[below][black]{};
\filldraw[black] (2,0) circle (1pt) node[below][black]{};
\filldraw[black] (4,0) circle (1pt) node[below][black]{};
\filldraw[black] (6,0) circle (1pt) node[below][black]{};

\filldraw[green] (0,2) circle (1.5pt) node[above][black]{};
\filldraw[black] (2,2) circle (1pt) node[above][black]{};
\filldraw[black] (4,2) circle (1pt) node[above][black]{};
\filldraw[black] (6,2) circle (1pt) node[above][black]{};

\draw[<-, thick] [black] (0.1,0)--(1.9,0) node[midway, below][black]{\tiny $1$};
\draw[<-, thick] [black] (2.1,0)--(3.9,0) node[midway, below][black]{\tiny $1$};
\draw[<-, thick] [black] (4.1,0)--(5.9,0) node[midway, below][black]{\tiny $1$};

\draw[->, thick] [black] (0.1,2)--(1.9,2) node[midway, above][black]{\tiny $1$};
\draw[->, thick] [black] (2.1,2)--(3.9,2) node[midway, above][black]{\tiny $1$};
\draw[->, thick] [black] (4.1,2)--(5.9,2) node[midway, above][black]{\tiny $1$};

\draw[<->, double] [black] (0,0.1)--(0,1.9) node[midway, left][black]{\tiny $0$} node[midway, right][black]{\tiny $0$};
\draw[<->, double] [black] (2,0.1)--(2,1.9) node[midway, left][black]{\tiny $0$} node[midway, right][black]{\tiny $0$};
\draw[<->, double] [black] (4,0.1)--(4,1.9) node[midway, left][black]{\tiny $0$} node[midway, right][black]{\tiny $0$};
\draw[<->, double] [black] (6,0.1)--(6,1.9) node[midway, left][black]{\tiny $0$} node[midway, right][black]{\tiny $0$};

\draw[->, thick] (0.1,-0.1) to[out=-30, in=-150, looseness=1.1] node[midway, below][black]{\tiny $1$} (5.9,-0.1);
\draw[->, thick] (5.9,2.1) to[out=150, in=30, looseness=1.1] node[midway, above][black]{\tiny $1$} (0.1,2.1);
\end{tikzpicture}
\end{minipage}

\caption{$\Gamma_f$ for $f=z^2+1$ and its normalization 
which is the Cayley graph of $D_4\simeq \MZ_4\rtimes \MZ_2$ with $P_f=8$.}
\label{fi:Gamma-z2-1}
\end{figure}
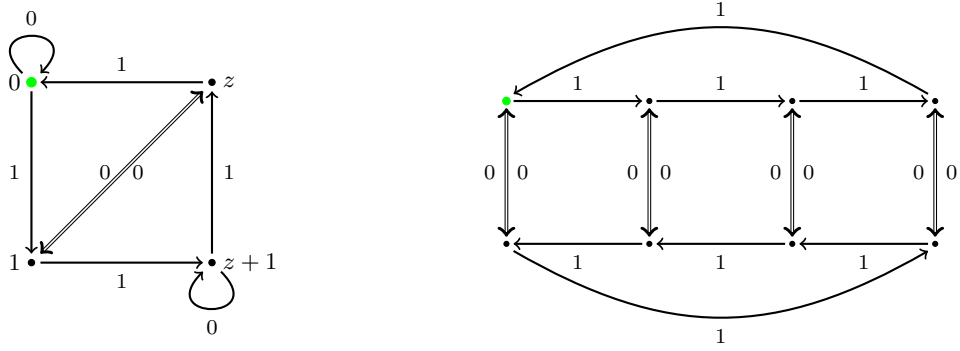

Let us examine some properties of the product graph
$\Gamma_{f,v_1}\times \dots \times \Gamma_{f,v_{2^n}}$
discussed in the proof of 
Proposition \ref{pr:periodic-division}.
By definition, its vertex set is the Cartesian product
$V(\Gamma_{f,v_1})\times \dots \times V(\Gamma_{f,v_{2^n}})$, i.e.,
its vertices are the tuples $\ovu=(u_1,\dots,u_{2^n})$ 
of vertices $u_i\in V(\Gamma_f)$.
For a tuple $\ovu=(u_1,\dots,u_{2^n})$ and a polynomial $h(z)$ define
the tuples $\ovu\cdot h(z)$ and $\ovu + h(z)$ by
\begin{align*}
\ovu\cdot h(z) &= (u_1\cdot h(z) \,\%\, f,\dots,u_{2^n}\cdot h(z) \,\%\, f),\\
\ovu + h(z) &= (u_1+h(z) \,\%\, f,\dots,u_{2^n}+ h(z) \,\%\, f).
\end{align*}
Then the transitions of $\Gamma_{f,v_1}\times \dots \times \Gamma_{f,v_{2^n}}$
can be defined algebraically as follows:
\begin{itemize}
\item 
$\ovu\stackrel{0}{\longrightarrow} \ovu\cdot z$, and
\item 
$\ovu\stackrel{1}{\longrightarrow} \ovu\cdot z+1$.
\end{itemize}

\begin{lemma}\label{le:div-automaton2}
The sequence of transitions 
$
\ovv^\ast
\stackrel{b_m}{\longrightarrow} \ovv_1
\stackrel{b_{m-1}}{\longrightarrow} \ovv_2
\stackrel{b_{m-1}}{\longrightarrow}
\dots
\stackrel{b_{1}}{\longrightarrow} \ovv_m
\stackrel{b_0}{\longrightarrow} \ovv_{m+1}
$
in $\Gamma^\ast$
that starts at $\ovv^\ast$ and is labeled with 
$b_m\dots b_0$ leads to the state 
$$
\ovv_{m+1} =
\ovv^\ast z^{m+1} + b_m z^m + b_{m-1} z^{m-1} + \dots + b_1 z + b_0.
$$
\end{lemma}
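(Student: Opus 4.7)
The plan is to mimic the proof of Lemma~\ref{le:div-automaton} in the product graph $\Gamma^\ast$, where the algebra carries over coordinate-wise. Since the operations $\ovu \cdot h(z)$ and $\ovu + h(z)$ on tuples are defined componentwise in the ring $\MZ_2[z]/(f)$, the transitions $\ovu \stackrel{0}{\longrightarrow} \ovu \cdot z$ and $\ovu \stackrel{1}{\longrightarrow} \ovu \cdot z + 1$ on $\Gamma^\ast$ satisfy the same algebraic recursion as the transitions of $\Gamma_f$, just in each coordinate simultaneously.

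I would then proceed by induction on $m \geq 0$. For the base case $m = 0$, the single transition $\ovv^\ast \stackrel{b_0}{\longrightarrow} \ovv_1$ yields $\ovv_1 = \ovv^\ast \cdot z + b_0$ by the very definition of the two transition types, which matches the claimed formula. For the inductive step, given a chain of length $m+1$, apply the induction hypothesis to the prefix reading $b_m \dots b_1$ to obtain
$$\ovv_m = \ovv^\ast z^m + b_m z^{m-1} + \dots + b_1.$$
The final transition labeled $b_0$ gives $\ovv_{m+1} = \ovv_m \cdot z + b_0$, and substituting produces
$$\ovv_{m+1} = (\ovv^\ast z^m + b_m z^{m-1} + \dots + b_1) \cdot z + b_0 = \ovv^\ast z^{m+1} + b_m z^m + \dots + b_1 z + b_0,$$
as required.

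The only point to verify, and it is scarcely an obstacle, is that the tuple operations distribute over polynomial linear combinations in the expected way, i.e., $(\ovu + \ovu') \cdot z = \ovu \cdot z + \ovu' \cdot z$ and similar identities. This is immediate because each coordinate lives in the commutative ring $\MZ_2[z]/(f)$, so coordinate-wise addition and multiplication by $z$ inherit every ring-theoretic identity needed. Hence no new work beyond the template of Lemma~\ref{le:div-automaton} is required, and the induction goes through verbatim.
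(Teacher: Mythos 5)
Your proof is correct and is essentially the same argument as the paper's: the paper proves the lemma with the one-line remark that it ``follows from Lemma~\ref{le:div-automaton},'' and your coordinate-wise induction is precisely the justification that remark leaves implicit. You have simply made explicit that the tuple operations in $\Gamma^\ast$ are componentwise in $\MZ_2[z]/(f)$, so the recursion from Lemma~\ref{le:div-automaton} lifts verbatim.
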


\begin{proof}
Follows from Lemma \ref{le:div-automaton}.
\end{proof}

\begin{lemma}\label{le:P-bound}
$\deg(f)=n\ \ \Rightarrow\ \ P_f\le 4^n$.
\end{lemma}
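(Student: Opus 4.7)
The plan is to bound $P_f=|V(\Gamma^\ast)|$ (as defined in the proof of Proposition~\ref{pr:periodic-division}) by parametrizing the vertices of $\Gamma^\ast$ with pairs $(\sigma,h)$ drawn from $\langle z\rangle\times\MZ_2[z]/(f)$, where $\langle z\rangle$ denotes the cyclic subgroup of the unit group $(\MZ_2[z]/(f))^\times$ generated by $z$. Note that $z$ is a unit modulo $f$ precisely because $f\not\equiv_z 0$, so $z^{-1}$ makes sense mod $f$.

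First I would show that every vertex in the connected component of $\ovv^\ast$ has the form
$$
\ovw \;=\; \ovv^\ast\cdot\sigma \;+\; h\cdot(1,1,\dots,1)\pmod f
$$
for some $\sigma\in\langle z\rangle$ and some $h\in\MZ_2[z]/(f)$. Lemma~\ref{le:div-automaton2} already covers forward paths (giving $\sigma=z^{m+1}$ and $h=b_mz^m+\dots+b_0$). In the subgroup-graph framework of \cite{Kapovich_Miasnikov:2002}, connectedness also allows inverse edges, so I must check inverse transitions as well: under the parametrization, forward $0,1$ send $(\sigma,h)$ to $(\sigma z,hz)$ or $(\sigma z,hz+1)$, while inverse $0,1$ send $(\sigma,h)$ to $(\sigma z^{-1},hz^{-1})$ or $(\sigma z^{-1},(h+1)z^{-1})$. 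Since $\ovv^\ast$ corresponds to $(\sigma,h)=(1,0)$, induction on path length shows that $\sigma$ remains in $\langle z\rangle$ and $h$ in $\MZ_2[z]/(f)$ throughout.

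Second, I would observe that the map $(\sigma,h)\mapsto \ovv^\ast\cdot\sigma+h\cdot(1,\dots,1)$ is injective. Because $V(\Gamma_f)=\MZ_2[z]/(f)$ and $\ovv^\ast=(v_1,\dots,v_{2^n})$ enumerates every element, some coordinates $i,j$ satisfy $v_i=0$ and $v_j=1$; reading off these two entries of the tuple recovers $h$ and $\sigma+h$, hence $(\sigma,h)$. Combining both observations,
$$
P_f \;=\; |V(\Gamma^\ast)| \;\le\; |\langle z\rangle|\cdot|\MZ_2[z]/(f)| \;\le\; 2^n\cdot 2^n \;=\; 4^n.
$$
I do not anticipate a serious obstacle; the only technical point is to confirm that the connected component in the product-graph construction is interpreted in the folded-subgroup-graph sense (i.e., allowing inverse edges), which is what forces the appeal to invertibility of $z$ modulo $f$ and thereby confines $\sigma$ to the cyclic subgroup $\langle z\rangle$.
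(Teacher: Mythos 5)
Your proof is correct and follows essentially the same route as the paper: both parametrize the reachable vertices of the product graph as $\ovv^\ast\cdot z^m + h\cdot(1,\dots,1)$ with $z^m$ ranging over at most $2^n$ powers modulo $f$ and $h$ ranging over the $2^n$ residues in $\MZ_2[z]/(f)$, yielding the bound $4^n$. The only differences are that you additionally verify closure under inverse edges (which the paper implicitly handles because, by Lemma~\ref{le:unique-in}, $\Gamma^\ast$ is a finite permutation automaton, so forward reachability already gives the whole component) and you add an injectivity check, which is not needed for the upper bound.
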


\begin{proof}
We claim that the connected component of 
$
\Gamma_{f,v_1}\times 
\dots
\times \Gamma_{f,v_{2^n}}
$
containing $\ovv^\ast$ has at most $4^n$ elements.
Indeed, by Lemma \ref{le:div-automaton2},
any sequence of transitions that starts at $\ovv^\ast$
labeled with $b_m \dots b_0$ leads to the state 
$$
\ovv = \ovv^\ast \cdot z^{m+1} + (b_m z^m + \dots +b_1 z+b_0)\,\%\, f.
$$ 
The number of different tuples of the form $\ovv^\ast \cdot z^{m+1}$
is not greater than the number of distinct powers $z^m$ modulo $f$, 
which is less than $2^n$.
The number of distinct terms $(b_m z^m + \dots +b_1 z+b_0)$ modulo $f$
is $2^n$. 
Hence, the number of tuples $\ovv$ that can be reached 
from $\ovv^\ast$ is not greater than $2^n\cdot 2^n = 4^n$.
\end{proof}

\section{Divisibility problem for parametric polynomials is decidable}
\label{se:divisibility-decidable}

Here we prove the main result of the paper.

\subsection{Piecewise periodicity of polynomials}

Let $p\in\MN$.
We say that a bit string $w=w_1\dots w_k$ is \emph{$p$-periodic}
if $w_{i}=w_{i+p}$ for every $1\le i\le k-p$. 
We do not require $p$ to be the minimal value satisfying this property. 
For a nonempty bit string $w$ and $k\in\MN$ define the bit string
$$
w^{(k)} = \mbox{the initial segment of $w^k$ of length $k$}.
$$

Consider a sequence of polynomials
$\{f_\delta\}_{\delta=1}^\infty \in\MZ_2[z^\pm]$.
Informally, we say that $\{f_\delta\}$ is \emph{$(A,p)$-piecewise periodic},
or, simply \emph{$(A,p)$-periodic}, if 
for all sufficiently large indices $\delta$, the binary presentation
for $f_\delta$ consists of $p$-periodic segments separated by short 
non-periodic segments of length $2A+1$.
Formally, $\{f_\delta\}$ is \emph{$(A,p)$-periodic} if 
\begin{itemize}
\item 
there are $s,t\in\MZ$, where $s\le t$,
\item 
there are bit strings $n_s,\dots,n_t\in\{0,1\}^{2A+1}$,
\item 
there are bit strings $p_{s+1},\dots,p_t\in\{0,1\}^{p}$,
\item 
there is $\Delta\in\MN$,
\end{itemize}
such that for every $i\ge \Delta$ the following condition holds:
\begin{itemize}
\item 
$\pres(f_\delta) = 
(\delta t+A,
n_s \circ p_{s}^{(\delta-2A-1)}
\circ \dots \circ 
n_{t-1} \circ p_{t-1}^{(\delta-2A-1)}
\circ
n_t
 )$.
\end{itemize}
Note that the bit strings $n_s,\dots,n_t\in\{0,1\}^{2A+1}$ and
$p_{s+1},\dots,p_t\in\{0,1\}^{p}$
correctly define
all but finitely many of the polynomials $f_\delta$;
concatenation in the second component of $\pres(f_\delta)$ 
is defined as expected when $\delta\ge 2A+1$.
That defines a subsequence of polynomials
$\{f_\delta\}_{\delta=2A+1}^\infty$,
denoted by $\PWP(t;n_s,\dots,n_t;p_{s+1},\dots,p_t)$, where
$\PWP$ stands for \emph{piecewise periodic}.
The tuple $\CP=(t;n_s,\dots,n_t;p_{s+1},\dots,p_t)$ is a representation
for the sequence $\{f_\delta\}_{\delta=2A+1}^\infty$.
Note that the values of $A$ and $p$ in that representation are defined 
by the lengths of bit strings $n_i$ and $p_i$. 

The trivial $\delta$-parametric polynomial $\blf=\blo$
defines a $(1,1)$-periodic sequence of polynomials.
The next lemma follows from the definition of $(A,p)$-periodicity.

\begin{lemma}\label{le:parametric-pwp}
Let $\blf=\sum_{i=s}^t f_i(z) z^{i\delta}$ be a nontrivial 
$\delta$-parametric polynomial, where 
$f_i(z)=\sum_{j=-\infty}^\infty f_{ij}z^j$.
Then the sequence $\{\blf_\delta\}$ is $(A,1)$-periodic, where
$$
A=\max_{f_{ij}\ne 0}|j|.
$$
\end{lemma}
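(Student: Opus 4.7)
The plan is to unpack the definition of $(A,1)$-periodicity directly by exhibiting the witnessing data. Starting from
$$
\blf_\delta \;=\; \sum_{i=s}^t f_i(z)\,z^{i\delta} \;=\; \sum_{i=s}^t \sum_{|j|\le A} f_{ij}\,z^{j+i\delta},
$$
I would call the inner sum for each fixed $i$ the \emph{$i$-th block} of $\blf_\delta$ and observe that its support is contained in the integer interval $I_i(\delta) := [i\delta-A,\,i\delta+A]$ of length $2A+1$, which does not depend on $\delta$. These blocks will play the role of the non-periodic segments in the $(A,1)$-periodic representation.

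The key geometric observation is that as soon as $\delta\ge 2A+1$, the intervals $I_s(\delta),\ldots,I_t(\delta)$ are pairwise disjoint and each pair of consecutive intervals is separated by a gap of exactly $\delta-2A-1\ge 0$ integer positions on which $\blf_\delta$ vanishes. Consequently, for $\delta\ge 2A+1$ the coefficient of $z^m$ in $\blf_\delta$ equals $f_{i,\,m-i\delta}$ when $m\in I_i(\delta)$, and equals $0$ for any $m$ lying strictly between two consecutive blocks. In particular $\ord(\blf_\delta)\ge s\delta - A$ and $\deg(\blf_\delta)\le t\delta + A$.

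Next I would set $\Delta := 2A+1$, define
$$
n_i \;:=\; f_{i,-A}\,f_{i,-A+1}\cdots f_{i,A} \;\in\;\{0,1\}^{2A+1}\ \ (s\le i\le t), \qquad p_i \;:=\; 0 \;\in\;\{0,1\}^{1}\ \ (s<i\le t),
$$
and verify that for every $\delta\ge\Delta$ the bit string listing the coefficients of $\blf_\delta$ from position $s\delta-A$ up to position $t\delta+A$ equals
$$
n_s \,\circ\, p_{s+1}^{(\delta-2A-1)} \,\circ\, n_{s+1} \,\circ\, \cdots \,\circ\, n_{t-1} \,\circ\, p_t^{(\delta-2A-1)} \,\circ\, n_t.
$$
This is exactly the template required by the definition of $(A,1)$-periodicity, so the pair $(t\delta+A,\ \text{above string})$ is a valid non-strict presentation of $\blf_\delta$ and $\{\blf_\delta\}_{\delta\ge\Delta}=\PWP(t;\,n_s,\ldots,n_t;\,p_{s+1},\ldots,p_t)$.

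There is essentially no hard step in this argument: the whole content is that $A$ was chosen to bound the horizontal extent of every $f_i$ uniformly, which is precisely what forces the blocks $f_i(z)z^{i\delta}$ to separate once $\delta$ grows. The only point requiring mild care is the use of a \emph{non-strict} presentation: some intermediate $f_i$ may be identically zero (in which case $n_i$ is the all-zero string) and even the boundary blocks $n_s,n_t$ may begin or end with $0$, but $\pres$ was specifically set up to accommodate this.
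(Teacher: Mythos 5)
Your proof is correct and matches the paper's (unstated) approach: the paper simply declares the lemma ``follows from the definition of $(A,p)$-periodicity'' and illustrates it with Figure~\ref{fi:pwp-example}, while you supply the routine verification. The witnessing data you exhibit ($\Delta=2A+1$, the blocks $n_i=f_{i,-A}\cdots f_{i,A}$, periodic symbol $p_i=0$), together with the disjointness of the intervals $I_i(\delta)$ for $\delta\ge 2A+1$ and the use of the non-strict presentation to absorb zero-padded or trivial blocks, is exactly what the definition demands.
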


Figure \ref{fi:pwp-example} illustrates Lemma \ref{le:parametric-pwp}
for a particular $\delta$-parametric polynomial.
\begin{figure}[h]
\centering
\includegraphics[width=0.9\linewidth]{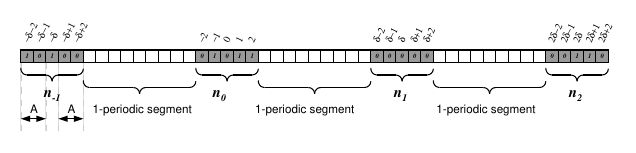}
\caption{
The $\delta$-parametric polynomial 
$
\blg=
(1+z^{-2})z^{-\delta}+
(z^2+z+z^{-1}) + 
(z+1)z^{2\delta}
$
defines an $(A=2,p=1)$-pwp sequence of polynomials, one of which, 
for $\delta=14$, is visualized above.
Increasing $\delta$ by $1$, stretches the picture by making 
the periodic segments' length greater by $1$.}
\label{fi:pwp-example}
\end{figure}

For a presentation $\CP=(t;n_s,\dots,n_t;p_{s+1},\dots,p_t)$
of a piecewise periodic sequence of functions 
define its \emph{$\delta$-degree} and \emph{$\delta$-order} as
$$
\deg_\delta(\CP) = t\ \ \mbox{ and }\ \ \ord_\delta(\CP) = s.
$$

\subsection{Reduction theorems for divisibility problems}

By Lemma \ref{le:parametric-pwp}
every $\delta$-parametric polynomial defines 
a sequence of piecewise periodic polynomials.
Hence, divisibility problem for $\delta$-parametric polynomials
can be viewed as a special case
(in sense of Proposition \ref{pr:top-div})
of the following problem.

\begin{algproblem}
\problemtitle{\textsc{Divisibility for piecewise periodic polynomials} $(\DIV(\blf,\CG,A,B,p,r))$.}
\probleminput{
$\blf$ is a $\delta$-parametric polynomial,
$\CG$ is a presentation of a $(B,p)$-periodic sequence of polynomials
$\{g_\delta\}_{\delta=2B+1}^\infty$, 
and $r\in\MN$.
}
\problemquestion{Is there $\delta\ge 2B+1$ satisfying $\delta\equiv_p r$ and $\blf_\delta\mid g_\delta$?}
\end{algproblem}

It is convenient to view $\DIV(\blf,\CG,A,B,p,r)$ as a true/false
value depending on whether $(\blf,\CG,A,B,p,r)$ is a positive/negative
instance of the problem.
Similarly, $\blf_\delta \mid g_\delta$ defines
a true/false value depending on whether
$\blf_\delta$ divides $g_\delta$ or not.

Let us introduce additional notation for divisibility.
\begin{itemize}
\item 
By $(\DIVw(\blf,\CG,A,B,p,r))$ we denote the set of all witnesses 
$\delta$ for the instance $(\blf,\CG,A,B,p,r)$ of the divisibility problem.
\item 
Similarly, we define $\DIV^{wit}_+(\blf,\blg)$.
\item 
For instantiated polynomials $\blf_\delta$ and $g_\delta$
the symbol $\blf_\delta \mid^{wit} g_\delta$ defines $\varnothing$ 
when $\blf_\delta \nmid g_\delta$ and $\{\delta\}$
when $\blf_\delta \mid g_\delta$.
\end{itemize}
The next proposition reduces an instance of the
divisibility problem $\DIV_+$
for $\delta$-parametric polynomials
to an instance of divisibility problem for piecewise periodic polynomials.

\begin{proposition}[The top case for divisibility]\label{pr:top-div}
Let $\blf$ be a nontrivial $\delta$-parametric polynomial and
$\{\blf_\delta\}_{\delta=0}^\infty$
an $(A,1)$-periodic sequence of polynomials.
Let $\blg$ be a $\delta$-parametric polynomial and
$\{\blg_\delta\}_{\delta=0}^\infty$
a $(B,1)$-periodic sequence of polynomials.
Let $\CG$ be a presentation for $\{\blg_\delta\}_{\delta=2B+1}^\infty$.
Then
$$
\DIV^{wit}_+(\blf,\blg) =
\bigg(\bigcup_{\delta=0}^{2B}
\blf_\delta \mid^{wit} \blg_\delta
\bigg)
\ \cup\ 
\DIV^{wit}(\blf,\CG,A,B,1,0).
$$
\end{proposition}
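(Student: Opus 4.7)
The plan is to prove the set equality by a case analysis on the size of $\delta$, partitioning the non-negative integers into the ``small'' range $0 \le \delta \le 2B$ and the ``large'' range $\delta \ge 2B+1$, and then matching each range to one of the two terms on the right-hand side.

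First I would unpack the three sets in play. The left-hand side $\DIV^{wit}_+(\blf,\blg)$ is, by definition, the set of all $\delta \ge 0$ with $\blf_\delta \mid \blg_\delta$. The first term on the right, $\bigcup_{\delta=0}^{2B}\blf_\delta \mid^{wit} \blg_\delta$, is tautologically the set of those $\delta \in \{0,\dots,2B\}$ for which $\blf_\delta \mid \blg_\delta$. The second term $\DIV^{wit}(\blf,\CG,A,B,1,0)$ consists of $\delta \ge 2B+1$ satisfying $\delta \equiv_1 0$ (a vacuous condition since the modulus is $1$) together with $\blf_\delta \mid g_\delta$, where $g_\delta$ is the polynomial that $\CG$ encodes at parameter value $\delta$.

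The only non-trivial input is the hypothesis that $\CG$ is a presentation of the sequence $\{\blg_\delta\}_{\delta = 2B+1}^\infty$. This immediately gives $g_\delta = \blg_\delta$ for every $\delta \ge 2B+1$, so in this regime the divisibility condition coming from the second right-hand-side term is literally $\blf_\delta \mid \blg_\delta$. With this identification, both containments are routine: for the forward direction, any $\delta \in \DIV^{wit}_+(\blf,\blg)$ lies in exactly one of the two ranges and thereby contributes to exactly one of the two terms on the right, and for the reverse direction any element of either right-hand term is a non-negative $\delta$ with $\blf_\delta \mid \blg_\delta$, hence lies in $\DIV^{wit}_+(\blf,\blg)$.

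There is no serious obstacle here; the argument is a bookkeeping verification that the two regimes of $\delta$ are correctly encoded by the two right-hand terms. The care required is in keeping the trivial modular condition $\delta \equiv_1 0$ and the threshold $\delta \ge 2B+1$ consistent with the definition of $\DIV^{wit}$, and in observing that the presentation $\CG$ is only guaranteed to represent $\{\blg_\delta\}$ for $\delta \ge 2B+1$, which is precisely the reason the finitely many small values of $\delta$ must be enumerated separately. The hypothesis that $\blf$ is nontrivial and $\{\blf_\delta\}$ is $(A,1)$-periodic (Lemma~\ref{le:parametric-pwp}) is used only to justify that the parameter $A$ appearing in $\DIV^{wit}(\blf,\CG,A,B,1,0)$ is well defined; it plays no role in the equality itself.
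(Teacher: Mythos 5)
Your proof is correct and follows the same route the paper takes; the paper simply asserts that the equality ``follows from the definitions,'' and your write-up spells out precisely what that means by partitioning $\delta\ge 0$ into the ranges $\{0,\dots,2B\}$ and $\{2B+1,2B+2,\dots\}$ and matching them to the two right-hand terms. Your observation that the condition $\delta\equiv_1 0$ is vacuous and that $\CG$ only has to agree with $\{\blg_\delta\}$ on the large range is exactly the bookkeeping the paper leaves implicit.
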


\begin{proof}
The statement follows from the definitions of 
$\DIV_+(\blf,\blg)$ and $\DIV(\blf,\CG,A,B,1,0)$.
\end{proof}

For a $\delta$-parametric polynomial $\blf$
and a representation $\CG$ of a $(B,p)$-periodic sequence of polynomials define
\begin{equation}\label{eq:Delta_f_CG}
\Delta(\blf,\CG) = [\deg_\delta(\CG) - \ord_\delta(\CG)]-
[\deg_\delta(\blf) - \ord_\delta(\blf)].
\end{equation}

Notice that a $(B,p)$-periodic sequence can be viewed as a
$(B',p)$-periodic, where $B<B'$. In particular, 
in Proposition \ref{pr:top-div} we may always assume that
$A\le B$.
The next proposition reduces an instance of the divisibility problem
with a presentation
$\CG$ to several instances of the same problem 
with presentations $\CG_i$ satisfying
$$
\ord_\delta(\CG_i)=\ord_\delta(\CG)
\ \ \mbox{ and }\ \ 
\deg_\delta(\CG_i) = \deg_\delta(\CG)-1,
$$
simplifying the problem.

\begin{proposition}[The reduction for divisibility]\label{pr:mid-div}
Let $\blf=\sum_{i=s}^t f_i(z) z^{i\delta}$ be a nontrivial 
$\delta$-parametric polynomial and $\{\blf_\delta\}_{\delta=0}^\infty$
an $(A,1)$-periodic sequence.
Let $\CG$ be a presentation
of a $(B,p)$-periodic sequence of polynomials 
$\{g_\delta\}_{\delta=2B+1}^\infty$, where $A\le B$.
Let $P=P_{f_t}$ be the constant associated with the polynomial $f_t$
introduced in Proposition \ref{pr:periodic-division}. 
If $\Delta\ge 0$, then for any $0\le r\le p-1$
$$
\DIV^{wit}(\blf,\CG,A,B,p,r) 
= 
\bigg(\bigcup_{\delta=2B+1,\delta\equiv_p r}^{2B+8A}
\blf_\delta \mid^{wit} g_\delta
\bigg)
\ \cup\ 
\bigg(\bigcup_{i=0}^{P-1} \DIV^{wit}(\blf,\CG_i,A,B+4A,p\cdot P,pi+r)\bigg)
$$
for some effectively computable $(B+4A,p\cdot P)$-periodic
sequences of polynomials defined by presentations
$\CG_0,\dots\CG_{P-1}$.
\end{proposition}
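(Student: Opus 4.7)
The plan is to split $\DIVw(\blf,\CG,A,B,p,r)$ by the size of $\delta$. For $\delta$ in the small range $[2B+1,\,2B+8A]$ with $\delta\equiv_p r$, I would simply instantiate $\blf_\delta$ and $g_\delta$ from their presentations and test divisibility directly via Proposition~\ref{pr:Laurent-complexity}(d); these contribute the first union. For $\delta>2B+8A$, I would show that $\blf_\delta\mid g_\delta$ is equivalent to $\blf_\delta\mid g'_\delta$ for a new sequence $\{g'_\delta\}$ whose $\delta$-degree is one smaller than $\deg_\delta(\CG)$. Since the structure of $\{g'_\delta\}$ will depend only on the residue $\delta\bmod pP$, and the $P$ residues $pi+r$ ($0\le i\le P-1$) exhaust all large $\delta\equiv_p r$ modulo $pP$, this yields the second union.

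To build $\{g'_\delta\}$ I would carry out one top-down round of long division of $g_\delta$ by $\blf_\delta$, collecting the top $\delta$ bits of the quotient as $h_\delta$ and setting $g'_\delta:=g_\delta+h_\delta\cdot\blf_\delta$ (with $+=-$ in $\MZ_2$). The leading $\delta$-block of $g_\delta$ is then cancelled, so $\deg_\delta(g'_\delta)\le\deg_\delta(\CG)-1$, and the congruence $g'_\delta\equiv g_\delta\pmod{\blf_\delta}$ supplies the desired divisibility equivalence. Over $\MZ_2$ every nonzero polynomial is monic, so this division is always well-defined. The crucial observation is that each quotient bit depends only on the current top coefficient of the residual and on the leading coefficient of $\blf_\delta$, which is exactly the top $\delta$-block $f_t(z)\,z^{t\delta}$; in other words, the bits of $h_\delta$ are precisely the trajectory of the division-by-$f_t$ automaton $\Gamma_{f_t}$ run on (the binary encoding of) the top part of $g_\delta$.

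Next I would invoke periodicity. The top part of $g_\delta$ contains a $p$-periodic block of length $\delta-2B-1$, and by Proposition~\ref{pr:periodic-division} the automaton $\Gamma_{f_t}$ returns to its starting state after every $pP$ periodic input bits; hence the corresponding bits of $h_\delta$ are $pP$-periodic. Multiplying a $pP$-periodic $h_\delta$ by $\blf_\delta$ (whose only nonzero bits lie in caps of width $2A+1$ spaced $\delta$ apart, with all-zero middles) and adding to $g_\delta$ produces a sequence whose middle bits remain $pP$-periodic, while boundary transitions generate new non-periodic caps. Partitioning the large $\delta$ by residue $\delta\equiv pi+r\pmod{pP}$ then yields the $P$ presentations $\CG_0,\dots,\CG_{P-1}$, constructed effectively from the presentations of $\blf$, $\CG$, and the automaton $\Gamma_{f_t}$.

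The main obstacle will be a careful accounting of the new cap widths in order to establish $(B+4A,\,pP)$-periodicity. Each new cap must absorb the original cap of $g_\delta$ (width $2B+1$), the overlap with shifted caps of $\blf_\delta$ (width $2A+1$), and short transition bursts at the endpoints of each periodic block where the orbit of $\Gamma_{f_t}$ has not yet locked onto its $pP$-cycle; the bound $2(B+4A)+1$ should follow from a case analysis that uses $A\le B$ and the fact that the periodic orbit is entered at most $O(A)$ bits after leaving a cap. This bookkeeping also explains the threshold $\delta>2B+8A=2(B+4A)$: it is exactly the smallest value of $\delta$ for which the expanded caps cannot collide with their neighbours, so that the new sequences $\{g'_\delta\}$ are genuinely $(B+4A,pP)$-periodic starting from $\delta=2(B+4A)+1$ as required by the output problems.
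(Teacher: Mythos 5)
Your proposal follows essentially the same route the paper takes: split witnesses by size of $\delta$, handle the small range directly, and for large $\delta$ perform a single round of ``partial division'' that removes the leading $\delta$-block of $g_\delta$; observe that the quotient bits trace an orbit in the division-by-$f_t$ automaton $\Gamma_{f_t}$; invoke Proposition~\ref{pr:periodic-division} to get $pP$-periodicity of that orbit inside each $p$-periodic block; and split large $\delta\equiv_p r$ into $P$ residue classes modulo $pP$ to obtain the presentations $\CG_0,\dots,\CG_{P-1}$. One small conceptual adjustment in your bookkeeping: Proposition~\ref{pr:periodic-division} gives the orbit $pP$-periodicity \emph{immediately} upon entry into a periodic block --- there is no ``lock-on'' or burn-in phase, since from any start state $s_0$ the word $w^P$ closes a loop back to $s_0$. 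The cap widening to width $2(B+4A)+1$ therefore does not come from a transient of $\Gamma_{f_t}$; it comes purely from the fact that copies of $\blf_\delta$ anchored at quotient bits near the right end of a periodic block have their own (width $2A+1$) caps protruding into the next non-periodic segment of $g_\delta$, and that the partial-division window stops $4A$ monomials past the start of the next cap (this is what the paper's Figure~\ref{fi:partial_division} is tracking). With that correction, your accounting should close, and the threshold $\delta>2B+8A$ is indeed the point at which adjacent widened caps are guaranteed disjoint so the new sequences are genuinely $(B+4A,pP)$-periodic.
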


\begin{proof}
By definition,
\begin{align*}
\DIV^{wit}(\blf,\CG,A,B,p,r) 
&= \Set{\delta}{
\delta\ge 2B+1,\ \ 
\delta\equiv_p r,\ \ 
\blf_\delta\mid g_\delta}\\
&= 
\bigg(\bigcup_{\delta=2B+1,\delta\equiv_p r}^{2B+8A}
\blf_\delta \mid^{wit} g_\delta
\bigg)
\cup
\Set{\delta}{
\delta\ge 2B+8A+1,\ \ 
\delta\equiv_p r,\ \ 
\blf_\delta\mid g_\delta}.
\end{align*}
Below we prove that for properly chosen presentations $\CG_0,\dots,\CG_{P-1}$ we have
\begin{equation}\label{eq:14}
\Set{\delta}{
\delta\ge 2B+8A+1,\ \ 
\delta\equiv_p r,\ \ 
\blf_\delta\mid g_\delta}=
\bigcup_{i=0}^{P-1} \DIV^{wit}(\blf,\CG_i,A,B+4A,p\cdot P,pi+r).
\end{equation}
For that purpose we introduce a notion of \emph{partial division}
of piecewise periodic polynomials $\{g_\delta\}$
by polynomials $\{\blf_\delta\}$.

Polynomial division of $g$ by $f$ can be viewed as a process 
of elimination of the leading monomial in $g$ 
by a suitable multiple of $f$.
This process is repeated through several steps until the remainder is obtained.

\begin{figure}[h]
\centering
\includegraphics[scale=1.2]{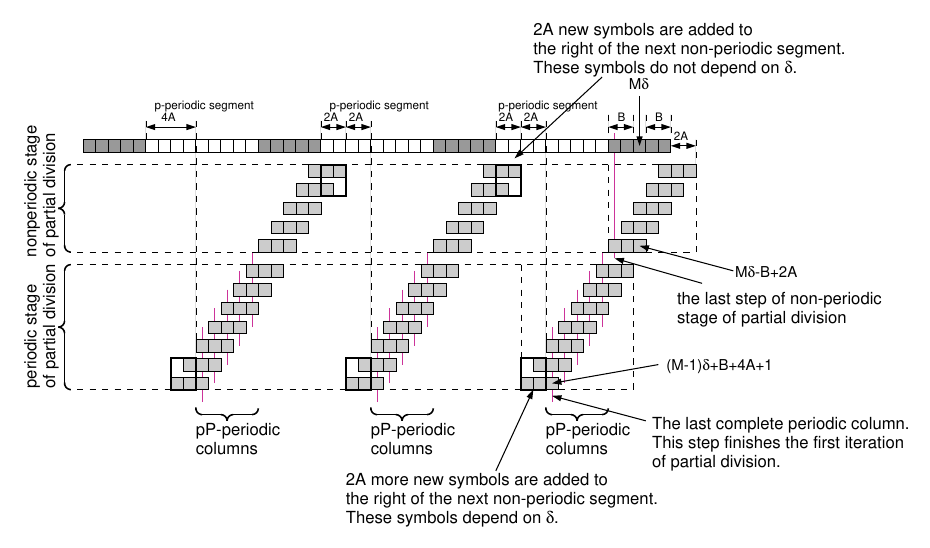}
\caption{A schematic picture for a single iteration of partial division.}
\label{fi:partial_division}
\end{figure}

For piecewise periodic polynomials $\{g_\delta\}$
and $\{\blf_\delta\}$
the partial division of $g_\delta$ by $\blf_\delta$
eliminates monomials in $g_\delta$ in the same way,
but stops before the complete remainder is found.
A schematic picture for a single iteration of partial division
is shown in Figure \ref{fi:partial_division}.
Gray squares stand for monomials (their potential positions) 
in non-periodic segments in $\blf_\delta$ and $g_\delta$
(originally they are defined by $f_i$'s and $g_i$'s).
White squares stand for the monomials in $p$-periodic segments
in $\blf_\delta$ and $g_\delta$.
Let $M=\deg_\delta(\CG)$.
We distinguish two stages in the process of partial division.
\begin{itemize}
\item 
Elimination of the initial non-periodic segment in $g_\delta$
before reaching the subsequent periodic segment.
The monomial $z^{M\delta-B+2A}$ is the last
monomial eliminated in $g_\delta$ at this stage.
\item 
Elimination of the subsequent periodic segment before reaching 
the next (extended) non-periodic segment.
The monomial $z^{(M-1)\delta+B+2A+1}$ is the last
monomial eliminated in $g_\delta$ at this stage.
\end{itemize}
Partial division begins by eliminating the leading monomial in $\blg_\delta$.
For that we consider the rightmost overlap of leading terms
as shown in Figure \ref{fi:partial_division}.
A single iteration of partial division eliminates the monomials
in the range
$z^{M\delta+B},\dots,z^{(M-1)\delta+B+4A+1}$ from $g_\delta$.
Eliminating those monomials produces the \emph{remainder of partial division}
$r_\delta$ that depends on $\delta$.
We claim that the sequence $\{r_\delta\}$ is in some sense piecewise periodic
(more precisely it splits into $P_f$ piecewise periodic sequences
that have their own non-periodic and periodic segments).

The elimination is primarily done by the term $f=f_t$
(it contains the leading monomial of $\blf$)
and the sequence of copies of $\blf_\delta$ in 
Figure \ref{fi:partial_division} is uniquely defined by
the transition chain in $\Gamma_f$ that starts at $0$
and is labeled with the sequence of monomial coefficients from $g_\delta$.
Suppose that the transition chain in $\Gamma_f$ that starts at $0$
and is labeled with the coefficients for monomials 
$z^{M\delta+B},\dots,z^{M\delta-B}$ ends at some state $s_0$.
$\CG$ is a presentation of a $(B,p)$-periodic sequence of polynomials
and, hence, the sequence of coefficients 
for monomials $z^{M\delta-B-1},\dots,z^{(M-1)\delta+B+4A+1}$
in $f_\delta$ is $p$-periodic.
If follows from Proposition \ref{pr:periodic-division} that
elimination of those monomials adds a $pP_f$-periodic 
sequence of copies of $f_\delta$'s to Figure \ref{fi:partial_division}
(horizontal sequences of squares corresponding to $\blf_\delta$)
at the periodic stage of partial division.
Hence, the columns of squares for the monomials 
$z^{M\delta-B-1},\dots,z^{(M-1)\delta+B+4A+1}$ are $pP_f$-periodic too.
Consequently, the sums over those columns are $pP_f$-periodic as well.
That creates $pP_f$-periodic segments in the reminder $r_\delta$.

As for non-periodic segments,
as shown in Figure \ref{fi:partial_division},
each original non-periodic segment 
(except for the first one which is eliminated) 
is extended with $4A$ additional symbols on the right.
Non-periodic segments do not extend left, because
periodic columns start immediately after every non-periodic part.
Therefore, the $B$ value for the new non-periodic segments
increases by $4A$.

Therefore, increasing $\delta$ by $pP_f$ adds a single period to each periodic 
segment (of length $pP_f$) in the remainder of partial division
$r_\delta$ and does not change non-periodic segments
(with the $B$ value increased by $4A$). 
Now, congruence modulo $p$ in the LHS of \eqref{eq:14}
can be split into $P$ congruences modulo $pP$
\begin{align*}
\Set{\delta}{
\delta\ge 2B+8A+1,\ \ 
\delta\equiv_p r,\ \ 
\blf_\delta\mid g_\delta}
&=
\bigcup_{i=1}^P
\Set{\delta}{
\delta\ge 2B+8A+1,\ \ 
\delta\equiv_{pP} pi+r,\ \ 
\blf_\delta\mid g_\delta}\\
&=
\bigcup_{i=1}^P
\Set{\delta}{
\delta\ge 2B+8A+1,\ \ 
\delta\equiv_{pP} pi+r,\ \ 
\blf_\delta\mid r_\delta}.
\end{align*}
Since we consider $\delta$ modulo $pP$,
for every $i$ the sequence of polynomials 
$\{r_\delta\}_{\delta\equiv_{pP} pi+r}$ is a subsequence
of a single $(B+4A,pP)$-periodic sequence of polynomials
(as we explained, 
those polynomials have the same non-periodic and periodic segments).
Hence, every $i$ we have
$$
\Set{\delta}{
\delta\ge 2B+8A+1,\ \ 
\delta\equiv_{pP} pi+r,\ \ 
\blf_\delta\mid r_\delta}=\DIV^{wit}(\blf,\CG_j,A,B+4A,p\cdot P,pj+r).
$$
Finally, $\CG_i$ represents polynomials $r_{pi+r}$;
it can be computed directly by (partially) dividing $g_\delta$ by $\blf_\delta$
and extracting periodic and non-periodic segments.
\end{proof}

\begin{proposition}[The base case for divisibility]\label{pr:bottom-div}
Let $\blf=\sum_{i=s}^t f_i(z) z^{i\delta}$ be a nontrivial 
$\delta$-periodic polynomial and $\{\blf_\delta\}_{\delta=0}^\infty$
an $(A,1)$-periodic sequence.
Let $\CG$ be a presentation
of a $(B,p)$-periodic sequence of polynomials 
$\{g_\delta\}_{\delta=2B+1}^\infty$.
If $(\blf,\CG,A,B,p,r)$ is a positive instance of the divisibility problem, 
then it has a witness $\delta$ satisfying the following:
\begin{enumerate}
\item[(1)]
$
\Delta(\blf,\CG) = 0
\ \ \Rightarrow\ \ \delta\le p+2B+4A+1
$,
\item[(2)]
$
\Delta(\blf,\CG) < 0
\ \ \Rightarrow\ \ \delta \le 2B + 2A
$.
\end{enumerate}
\end{proposition}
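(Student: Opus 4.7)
The plan is to handle the two cases by quite different arguments: case (2) reduces to a direct $\delta$-spread inequality, while case (1) is a shrinking/periodicity argument in which one full period $p$ can be removed from a large witness.

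For case (2) with $\Delta(\blf,\CG)<0$, I would argue as follows. Because $\blf=\sum_{i=s}^{t} f_i(z) z^{i\delta}$ with $f_s,f_t\neq 0$, once $\delta$ exceeds $2A$ there is no overlap between the shifted supports of the $f_i(z)z^{i\delta}$, and hence $\deg(\blf_\delta)=t\delta+\deg(f_t)$ and $\ord(\blf_\delta)=s\delta+\ord(f_s)$. In particular $\deg(\blf_\delta)-\ord(\blf_\delta)\ge (t-s)\delta-2A=d_\blf\cdot\delta-2A$. From the presentation $\CG$ we have the opposite-direction bound $\deg(g_\delta)-\ord(g_\delta)\le d_\CG\cdot\delta+2B$ whenever $g_\delta\neq 0$. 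If $\blf_\delta\mid g_\delta$ and $g_\delta\neq 0$, the spread of $\blf_\delta$ cannot exceed the spread of $g_\delta$, so $d_\blf\cdot\delta-2A\le d_\CG\cdot\delta+2B$, i.e.\ $(-\Delta)\delta\le 2A+2B$. Since $-\Delta\ge 1$, we conclude $\delta\le 2B+2A$. The residual possibility $g_\delta=0$ is degenerate: from the bit-string formula for $\pres(g_\delta)$, if $g_\delta=0$ for any $\delta\ge 2B+1$ sufficiently large (i.e.\ once each periodic block contains a full period), then in fact every $n_i$ and $p_i$ in $\CG$ is the zero string, so $g_\delta=0$ for all $\delta\ge 2B+1$; then one picks the smallest $\delta\ge 2B+1$ with $\delta\equiv_p r$ and checks that $\blf_\delta\neq 0$ (which is automatic once $\delta>2A$), yielding a witness in the claimed range.

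For case (1) with $\Delta(\blf,\CG)=0$, the spreads match and one cannot rule out large witnesses by size alone; here I would use a period-shrinking descent. The claim I would aim to prove is: if $\delta\ge p+2B+4A+2$ is a witness, then $\delta-p$ (which still satisfies $\delta-p\ge 2B+1$ and $\delta-p\equiv_p r$) is also a witness. Iterating drops every witness into $[2B+1,\,p+2B+4A+1]$. To establish the descent step, I would mimic the partial-division analysis from the proof of Proposition~\ref{pr:mid-div} but use it statically. Perform the long division of $g_\delta$ by $\blf_\delta$; because $\Delta=0$, the hypothetical quotient $q_\delta$ has bounded spread (a constant depending only on $A$ and $B$, not on $\delta$). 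Tracking the division through the division-by-$f_t$ automaton $\Gamma_{f_t}$, the intermediate remainders settle into a $p$-periodic regime inside each periodic block of $g_\delta$ once we are past the initial non-periodic segment (here the buffers of width $4A$ around every non-periodic joint and the assumption $\delta\ge p+2B+4A+2$ are exactly what guarantee that at least one full period of both $g$ and $\blf$ is traversed before the next non-periodic joint is met). Stripping one full period from each periodic block produces the polynomials $g_{\delta-p}$ and $\blf_{\delta-p}$, and the division computation is identical on the unchanged non-periodic portions and on one fewer copy of the period; in particular the final remainder is unchanged. Hence $\blf_{\delta-p}\mid g_{\delta-p}$ as required.

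The main obstacle is the descent step in case (1). Although the intuition of ``remove one period and the division still works'' is clean, a rigorous proof requires simultaneously tracking the automaton state of $\Gamma_{f_t}$ at each periodic segment, the contribution of the lower-order $f_i(z)z^{i\delta}$ terms that cross the joints between non-periodic and periodic regions, and the alignment of the successive non-periodic segments of $g_\delta$ with the non-periodic segments of the shifted copies of $\blf_\delta$. The role of the constant $4A$ in the bound is precisely to leave enough room on either side of each joint so that the periodic division behavior of Proposition~\ref{pr:periodic-division} is fully in effect before it is interrupted; the role of the term $p$ is to guarantee that at least one full period is available to remove. Once these geometric considerations are formalized, the descent is immediate and both bounds follow.
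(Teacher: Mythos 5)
Your Case~(2) argument with $\Delta(\blf,\CG)<0$ is exactly the paper's: compare the spreads of $\blf_\delta$ and $g_\delta$, get $(-\Delta)\delta\le 2A+2B$, and conclude $\delta\le 2A+2B$. (The side remark about $g_\delta=0$ is not essential --- if $g_\delta=0$ for a large $\delta$ then $\blg$ is the trivial $\delta$-parametric polynomial, a case the paper dismisses at the start of the divisibility discussion --- and your ``smallest $\delta\ge 2B+1$ congruent to $r$'' is at most $2B+p$, which need not be $\le 2B+2A$; but this is peripheral.)

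Case~(1) is where the proposal goes wrong. You try to prove a \emph{uniform} period-shrinking descent: any witness $\delta\ge p+2B+4A+2$ yields a witness $\delta-p$. The justification is that "the intermediate remainders settle into a $p$-periodic regime inside each periodic block," so stripping one slice of width $p$ leaves the division tableau, and hence the final remainder, unchanged. That claim is not available. Proposition~\ref{pr:periodic-division} gives $pP_{f_t}$-periodicity of the automaton states, not $p$-periodicity; removing a block of length $p$ does not align the tableau with itself unless $P_{f_t}=1$. More fundamentally, when $\Delta(\blf,\CG)=0$ the quotient has spread $O(A+B)$, so only $O(A+B)$ shifted copies of $\blf_\delta$ appear in the tableau and the automaton is never iterated through a full periodic stretch at all --- the ``periodic regime'' you invoke simply does not occur in this case. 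The paper's proof instead observes that $q_\delta\blf_\delta$ is supported only within $O(A+B)$ of the positions $z^{s\delta},\ldots,z^{t\delta}$, so a long middle portion of each periodic block of $g_\delta$ (of length at least $\delta-2B-4A-1$) is left untouched and must coincide with the remainder there. This forces a case split: if some $p_i$ contains a $1$, a full surviving period makes the remainder nonzero, so no witness with $\delta\ge p+2B+4A+2$ exists (descent is vacuous, and unproved by your argument); if every $p_i=0^p$, the untouched middle is identically zero and dropping one zero period trivially preserves divisibility (here your descent holds, but for the elementary reason that nothing in the tableau changes, not because of any automaton periodicity). Without this dichotomy, the descent step in the nonzero-period subcase has no proof, and the bound $\delta\le p+2B+4A+1$ is not established.
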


\begin{proof}
Let us consider several cases.

\textsc{(Case-I)}
Suppose that $\Delta(\blf,\CG) = 0$.
This case is visualized in Figure \ref{fi:division-same-span}.
The actual steps of division are defined by the leading and trailing 
monomials in $\blf_\delta$ and $g_\delta$ for $\delta \ge 2B+1$
(the latter condition comes from the definition of $\DIV$)
and the number of steps is bounded by $2(A+B)+1$ for any $\delta$
as shown in the figure.
\begin{figure}[h]
\centering
\includegraphics[width=1\linewidth]{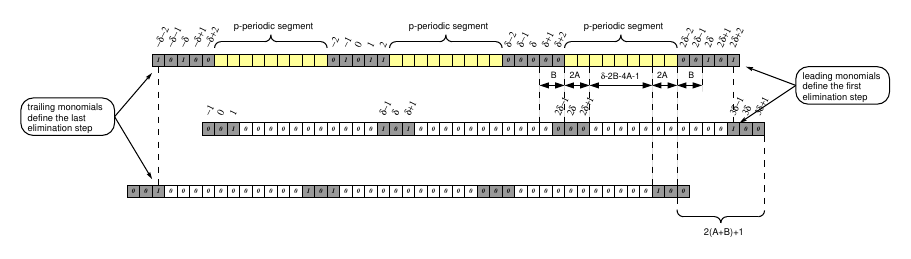}
\caption{A schematic picture for division $\blf_\delta\mid g_\delta$
for the case
$\deg_\delta(\CG) - \ord_\delta(\CG) = \deg_\delta(\blf) - \ord_\delta(\blf)=3$.}
\label{fi:division-same-span}
\end{figure}
Consider two subcases.

\textsc{(Case-Ia)}
Suppose that some $p$-periodic segment $p_i$
in the representation $\CG$ is not a sequence of $p$ zeros;
that is, $p_i$ contains at least one symbol $1$.
Then $\delta$ cannot be large, because $2(A+B)+1$ steps
of division modify only a bounded part of the corresponding 
$p$-periodic segment,
leaving a subsegment of length at least $\delta - 2B - 4A - 1$ 
unmodified (see Figure \ref{fi:division-same-span}).
Therefore, every witness $\delta$ must satisfy $\delta - 2B - 4A - 1 < p$,
because otherwise a whole period survives the division and
at least one symbol $1$ appears in the remainder.

\textsc{(Case-Ib)}
Suppose that every periodic bit string $p_i$
in the representation $\CG$ is a sequence of $p$ zeros, i.e., $p_i = 0^p$.
If $\delta > p + 2B + 4A + 1$ is a witness, then $\delta' = \delta - p$
is also a witness, because decreasing $\delta$ by $p$ simply removes
one full period (consisting of $0^p$) from the middle of each $p$-periodic segment.
As a result, the remainder, which is a sequence of zeros, shortens,
but remains a sequence of zeros.
Therefore, in this case, if there is a large solution, then there is a solution
satisfying $\delta\le p+2B+4A+1$.

\textsc{(Case-II)}
Finally, consider the case when 
$\deg_\delta(\CG) - \ord_\delta(\CG) < \deg_\delta(\blf) - \ord_\delta(\blf)$.
If $\delta\in\MN$ is a witness for the given instance,
then $\blf_\delta \mid g_\delta$ and, hence,

\begin{center}
\scalebox{0.9}{\vbox{
\begin{align*}
(\deg_\delta(\blf) - \ord_\delta(\blf))\delta - 2A
&\ \le\ 
\deg(\blf_\delta)-\ord(\blf_\delta)
&&(\mbox{because $\{\blf_\delta\}$ is $(A,1)$-periodic} )\\
&\ \le\ 
\deg(g_\delta)-\ord(g_\delta)
&& (\mbox{because} \blf_\delta \mid g_\delta)\\
&\ \le\ 
(\deg_\delta(\CG) - \ord_\delta(\CG))\delta +2B
&& (\mbox{because $\{g_\delta\}$ is $(B,p)$-periodic.})
\end{align*}
}}
\end{center}
which implies that
$$
[(\deg_\delta(\blf) - \ord_\delta(\blf)) -
(\deg_\delta(\CG) - \ord_\delta(\CG))]\delta  
\le 2B + 2A.
$$
Hence, $\delta \le 2B + 2A$ and (2) holds.
\end{proof}

\subsection{Divisibility for $\delta$-parametric polynomials is decidable}

Consider an instance $(\blf,\blg)$ of $\DIV_+$, where $\blf$ is not trivial.
If $\blg$ is trivial, then the instance is positive. 
Hence, we assume that $\blg$ is not trivial too.
Suppose that $\{\blf_\delta\}_{\delta=0}^\infty$
is an $(A,1)$-periodic sequence of polynomials and
$\{\blg_\delta\}_{\delta=0}^\infty$ a $(B,1)$-periodic sequence 
of polynomials satisfying $A\le B$.
Define
\begin{equation}\label{eq:Delta_fg}
\Delta(\blf,\blg) = [\deg_\delta(\blg) - \ord_\delta(\blg)]-
[\deg_\delta(\blf) - \ord_\delta(\blf)].
\end{equation}

Propositions \ref{pr:top-div}, \ref{pr:mid-div}, and \ref{pr:bottom-div}
give an algorithm for computing $\DIV_+(\blf,\blg)$ for given 
$\delta$-parametric polynomials 
$\blf$ and $\blg$.

\begin{theorem}\label{th:div-plus-bound}
Let $\blf=\sum_{i=s}^t f_i(z) z^{i\delta}$
be a nontrivial $\delta$-parametric polynomial and
$\{\blf_\delta\}_{\delta=0}^\infty$
an $(A,1)$-periodic sequence of polynomials.
Let $\blg$ be a $\delta$-parametric polynomial and
$\{\blg_\delta\}_{\delta=0}^\infty$
a $(B,1)$-periodic sequence of polynomials satisfying $A\le B$.
Let $\Delta=\Delta(\blf,\blg)$ and $P=P_{f_t}$.
Then the following holds:
\begin{itemize}
\item[(a)]
$
\Delta<0\ \ \Rightarrow\ \ 
\DIV_+(\blf,\blg) = \bigvee_{\delta=0}^{2B+2A} (\blf_\delta \mid \blg_\delta),
$
\item[(b)]
$
\Delta\ge 0\ \ \Rightarrow\ \ 
\DIV_+(\blf,\blg) = \bigvee_{\delta=0}^{P^\Delta+2(B+4A\Delta)+4A+1} (\blf_\delta \mid \blg_\delta).
$
\end{itemize}
\end{theorem}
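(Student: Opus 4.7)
The plan is to combine the three reduction propositions (Propositions \ref{pr:top-div}, \ref{pr:mid-div}, and \ref{pr:bottom-div}) and track how the parameters $B$, $p$, and $\Delta$ evolve under iterated application. First I would apply Proposition \ref{pr:top-div} to rewrite $\DIV_+(\blf,\blg)$ as the union of a finite witness search over $\delta \in \{0,\dots,2B\}$ and a single instance $\DIV(\blf,\CG,A,B,1,0)$, where $\CG$ is a presentation of the tail $\{\blg_\delta\}_{\delta\ge 2B+1}$. This presentation inherits the $\delta$-degree and $\delta$-order of $\blg$, so $\Delta(\blf,\CG)=\Delta(\blf,\blg)=\Delta$.

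For part (a), with $\Delta<0$, Proposition \ref{pr:bottom-div}(2) directly bounds every witness of the residual $\DIV$ instance by $\delta \le 2B+2A$, which already absorbs the initial finite range $\delta\le 2B$ supplied by the top-case reduction, yielding (a) immediately.

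For part (b), with $\Delta\ge 0$, I would iterate Proposition \ref{pr:mid-div} exactly $\Delta$ times. At iteration $k$ (for $k=0,\ldots,\Delta-1$) every residual presentation $\CG'$ produced so far will satisfy $\Delta(\blf,\CG')=\Delta-k\ge 1$, so the hypothesis of Proposition \ref{pr:mid-div} is met. Each application lowers the $\delta$-degree of the presentation by one (hence decreases $\Delta$ by one), replaces $B$ by $B+4A$ and $p$ by $p\cdot P$, splits each instance into $P$ sub-instances, and contributes a finite set of small witnesses in the range $[2B_k+1,\,2B_k+8A]$ with $B_k=B+4Ak$. After $\Delta$ rounds the parameters become $B_\Delta=B+4A\Delta$, $p_\Delta=P^\Delta$, and the $\Delta$ value of each residual presentation has reached $0$, so Proposition \ref{pr:bottom-div}(1) applies and bounds every witness by $p_\Delta + 2B_\Delta + 4A + 1 = P^\Delta + 2(B+4A\Delta) + 4A + 1$.

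To finish, I would verify that the small-witness ranges accumulated at each intermediate step are absorbed into the final bound: the top-case small witnesses lie in $[0,2B]$ and the small witnesses contributed at iteration $k$ lie in $[0,\,2B+8A(k+1)]$, whose worst case $\delta \le 2B + 8A\Delta$ is dominated by $P^\Delta + 2(B+4A\Delta)+4A+1$ since $P\ge 1$. The main technical obstacle I anticipate is the bookkeeping across iterations: checking that the invariant $A\le B_k$ is preserved (so that Proposition \ref{pr:mid-div} may legitimately be invoked at each round), that each residual presentation produced is genuinely $(B_k,p_k)$-periodic, and that its $\delta$-order and $\delta$-degree behave so that $\Delta$ decreases by exactly one per round. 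These facts are implicit in the construction used in the proof of Proposition \ref{pr:mid-div}, but they need to be unpacked carefully to make the induction on $\Delta$ clean.
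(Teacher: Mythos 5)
Your proposal follows essentially the same route as the paper's own proof: apply Proposition~\ref{pr:top-div} once, iterate Proposition~\ref{pr:mid-div} exactly $\Delta$ times (tracking $B_k=B+4Ak$ and $p_k=P^k$), and finish with Proposition~\ref{pr:bottom-div}. Your closing observation about verifying the invariant $A\le B_k$ is immediate (since $B_k=B+4Ak\ge B\ge A$), and the bookkeeping of the small-witness ranges you flag is indeed glossed over in the paper's version too, so your caution there is well placed but does not change the conclusion.
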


\begin{proof}
In both cases, 
Proposition \ref{pr:top-div} constructs a pwp-presentation $\CG$ for $\blg$
and translates $\DIV_+(\blf,\blg)$ into 
$\DIV(\blf,\CG,A,B,1,0)$ with an overhead of 
computing the disjunction
$\bigvee_{\delta=0}^{2B} (\blf_\delta \mid \blg_\delta)$.
Notice that $\Delta(\blf,\CG)=\Delta(\blf,\blg)$.

To prove (a) suppose that $\Delta<0$.
Then the assumption of Proposition \ref{pr:bottom-div}(2) 
is satisfied for $\DIV(\blf,\CG,A,B,1,0)$.
Hence, a witness $\delta$ for $\DIV(\blf,\CG,A,B,1,0)$ (if exists)
is bounded by $2A+2B$. Overall that translates to 
$\bigvee_{\delta=0}^{2B+2A} (\blf_\delta \mid \blg_\delta)$.

To prove (b) suppose that $\Delta\ge 0$.
Proposition \ref{pr:mid-div} translate the instance
$\DIV(\blf,\CG,A,B,1,0)$ produced by Proposition \ref{pr:top-div}
into a disjunction of $P$ instances $\DIV(\blf,\CG_i,A,B+4A,1\cdot P,i)$.
Each new instance of $\DIV$ can be further split
to a disjunction of $P$ similar instances. This creates a tree
$T_{\blf,\blg}$ of instances $\DIV(\blf,\CG,A,B',p,r)$ with 
the root $\DIV(\blf,\CG,A,B,1,0)$.
Each reduction decreases the value of $\Delta$ by one.
Reduction stops when the value of $\Delta$ reaches zero,
which creates a tree of height $\Delta$,
where every non-leaf vertex has degree $P$,
and every leaf node has instance 
$\DIV(\blf,\CG_i,A,B+4A\Delta,P^\Delta,i)$ for which $\Delta=0$.
By Proposition \ref{pr:mid-div}, 
$\DIV(\blf,\CG,A,B,1,0)$ has a witness $\delta$ if and only if 
$\delta$ is a small instance (less than $2(B+4A\Delta)+8A$) 
of $\DIV(\blf,\CG,A,B,1,0)$
or $\delta$ is a witness for one of the instances
$\DIV(\blf,\CG_i,A,B+4A\Delta,P^\Delta,i)$.
By Proposition \ref{pr:bottom-div}(1), in the latter case 
we may assume that that $\delta$ is bounded by $P^\Delta+2(B+4A\Delta)+4A+1$.

Thus, in this case, a witness $\delta$ for $\DIV(\blf,\CG,A,B,1,0)$ (if exists)
is bounded by $P^\Delta+2(B+4A\Delta)+4A+1$. Overall that translates to 
$\bigvee_{\delta=0}^{P^\Delta+2(B+4A\Delta)+4A+1} (\blf_\delta \mid \blg_\delta)$.
\end{proof}

\begin{corollary}\label{co:div-bound}
Let $\blf=\sum_{i=s}^t f_i(z) z^{i\delta}$
be a nontrivial $\delta$-parametric polynomial and
$\{\blf_\delta\}_{\delta=0}^\infty$
an $(A,1)$-periodic sequence of polynomials.
Let $\blg$ be a $\delta$-parametric polynomial and
$\{\blg_\delta\}_{\delta=0}^\infty$
a $(B,1)$-periodic sequence of polynomials satisfying $A\le B$.
If $(\blf,\blg)$ is a positive instance of $\DIV$, then it has a 
witness $\delta$ satisfying 
$$
-(P_{f_s}^\Delta+2(B+4A\Delta)+4A+1)
\le \delta \le 
P_{f_t}^\Delta+2(B+4A\Delta)+4A+1.
$$
\end{corollary}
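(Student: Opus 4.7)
The plan is to deduce the corollary from Theorem \ref{th:div-plus-bound} together with Proposition \ref{pr:div-div2}. The proposition splits divisibility over all $\delta\in\MZ$ into positive divisibility for two pairs,
$$
\DIV(\blf,\blg) = \DIV_+(\blf,\blg)\ \vee\ \DIV_+(\flip(\blf),\flip(\blg)),
$$
so if $(\blf,\blg)$ is a positive instance of $\DIV$, then at least one of the two disjuncts is positive. A non-negative witness $\delta'$ for the second disjunct corresponds to a non-positive witness $-\delta'$ for $(\blf,\blg)$, since $[\flip(\blf)]_{\delta'}=\blf_{-\delta'}$ and likewise for $\blg$.

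If $\DIV_+(\blf,\blg)$ holds, I would invoke Theorem \ref{th:div-plus-bound} directly. When $\Delta\ge 0$, part (b) produces a witness $0\le \delta\le P_{f_t}^\Delta + 2(B+4A\Delta) + 4A + 1$, giving the upper bound claimed in the corollary. When $\Delta<0$, part (a) delivers the smaller bound $\delta\le 2B+2A$, which is consistent with the stated range.

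If instead $\DIV_+(\flip(\blf),\flip(\blg))$ holds, I would first check that Theorem \ref{th:div-plus-bound} applies to the flipped pair with the relevant invariants transported correctly. Reindexing $j=-i$ gives $\flip(\blf) = \sum_{j=-t}^{-s} f_{-j}(z) z^{j\delta}$; in particular $\deg_\delta(\flip(\blf)) = -s$, so the leading $\delta$-coefficient of $\flip(\blf)$ is $f_s$, which is the reason $P_{f_s}$ shows up on the negative side. Since $[\flip(\blf)]_\delta$ and $\blf_\delta$ share the same maximum $|j|$ appearing among coefficients, Lemma \ref{le:parametric-pwp} shows that the $(A,1)$-periodicity persists under flip, and similarly for $\blg$. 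Moreover $\deg_\delta-\ord_\delta$ is unchanged by flipping, so $\Delta(\flip(\blf),\flip(\blg))=\Delta$. Applying Theorem \ref{th:div-plus-bound} to $(\flip(\blf),\flip(\blg))$ then yields a witness $0\le \delta'\le P_{f_s}^\Delta + 2(B+4A\Delta) + 4A + 1$, and the desired negative witness for $(\blf,\blg)$ is $\delta=-\delta'$.

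The only delicate point is the bookkeeping needed to track how $\flip$ transforms the parameters $A$, $B$, $\Delta$, and in particular the leading $\delta$-coefficient; this is straightforward but must be carried out carefully to justify why $P_{f_s}$ rather than $P_{f_t}$ governs the negative bound. All of the substantive analysis is already encapsulated in Theorem \ref{th:div-plus-bound}, so no further divisibility arguments are required.
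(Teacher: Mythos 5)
Your argument is correct and mirrors the paper's own proof exactly: both split $\DIV$ into $\DIV_+(\blf,\blg)\vee\DIV_+(\flip(\blf),\flip(\blg))$ via Proposition~\ref{pr:div-div2}, apply Theorem~\ref{th:div-plus-bound} to each disjunct, and observe that $\flip$ preserves $A$, $B$, $\Delta$ while swapping the leading $\delta$-block to $f_s$, which is why $P_{f_s}$ governs the negative side. The extra bookkeeping you spell out (reindexing $j=-i$, invariance of $\deg_\delta-\ord_\delta$) is exactly the detail the paper leaves implicit.
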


\begin{proof}
By Proposition \ref{pr:div-div2},
$\DIV(\blf,\blg) = \DIV_+(\blf,\blg) \vee \DIV_+(\flip(\blf),\flip(\blg))$.
Hence, $(\blf,\blg)$ is a positive instance of $\DIV$ if and only if
either $(\blf,\blg)$ is a positive instance of $\DIV_+$
or $(\flip(\blf),\flip(\blg))$ is a positive instance of $\DIV_+$.
Theorem \ref{th:div-plus-bound} 
covers the instance $\DIV_+(\blf,\blg)$ and 
gives the upper bound on $\delta$.

Notice that applying $\flip$
to $\blf$ and $\blg$ does not change the values of $A$, $B$, and $\Delta$,
but changes the leading block in $\blf$.
The leading $\delta$-block of $\flip(\blf)$ is $f_s(z)$.
Hence, if $(\flip(\blf),\flip(\blg)$ is a positive instance of 
$\DIV_+$, then it has a witness $\delta'\ge 0$
bounded by $P_{f_s}^\Delta+2(B+4A\Delta)+4A+1$.
Hence, $-\delta'$ is a witness for the instance
$(\blf,\blg)$ of $\DIV$ that satisfies the claimed lower bound.
\end{proof}

\begin{theorem}\label{th:delta-bound}
Suppose that $w=w(a,x,t)$ satisfies $x_w=0$.
If $w=1$ has a solution, then it has a solution $x=(\delta,f)$ satisfying
$|\delta| \le 2^{|w|^2/2}+2|w|^2+3|w|+1$.
\end{theorem}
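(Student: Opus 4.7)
The plan is to reduce via Proposition~\ref{prop:main-equivalence} to the divisibility problem, invoke the explicit witness bound of Corollary~\ref{co:div-bound}, and then estimate every parameter in that bound using the geometric tracing of Section~\ref{se:tracing}.

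Since $x_w=0$, Proposition~\ref{prop:main-equivalence} says that $w=1$ is solvable iff $t_w=0$ and $(\blf,\blg):=(\den(w),\num(w))$ is a positive instance of the divisibility problem. Assume solvability, so in particular $t_w=0$. When $\blf$ is not the trivial $\delta$-parametric polynomial, Corollary~\ref{co:div-bound} produces a witness $\delta$ with
\[
|\delta| \;\le\; \max(P_{f_s},P_{f_t})^{\Delta} + 2(B+4A\Delta)+4A+1,
\]
where $A,B$ are the periodicity constants of $\{\blf_\delta\},\{\blg_\delta\}$, $\Delta=\Delta(\blf,\blg)$, and $f_s,f_t$ are the trailing and leading $\delta$-blocks of $\blf$. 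If instead $\blf\equiv 0$ as a $\delta$-parametric polynomial, then solvability forces $\num_\delta(w)=0$, which by separation of the supports of the distinct $\delta$-blocks of $\num(w)$ confines $|\delta|$ to an $O(|w|)$ range, well within the claimed bound.

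Now I bound the parameters via the tracing. Write $|w|=|w|_a+|w|_t+|w|_x$ for the letter counts. Because $x_w=t_w=0$, the path $\Path(w)$ starts and ends at the origin, and a simple balanced one-dimensional walk argument forces every visited lattice point to have $t$-coordinate in $[-|w|_t/2,|w|_t/2]$ and $x$-coordinate in $[-|w|_x/2,|w|_x/2]$. Since $N_w,D_w$ lie in the visited set, this yields $A,B \le |w|_t/2$, while the $\delta$-spans of $\blf$ and $\blg$ are each at most $|w|_x$, whence $\Delta \le |w|_x$. The leading and trailing $\delta$-block polynomials $f_s,f_t$, once shifted to $\MZ_2[z]$, have degree at most the $t$-span of the extreme $x$-column in $D_w$, hence at most $|w|_t$; Lemma~\ref{le:P-bound} then gives $P_{f_s},P_{f_t}\le 4^{|w|_t}$.

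Combining, $\max(P_{f_s},P_{f_t})^\Delta \le 4^{|w|_t\cdot |w|_x}$, and the decisive step is AM--GM applied to $|w|_t+|w|_x\le|w|$, which gives $|w|_t\cdot|w|_x\le|w|^2/4$ and hence $4^{|w|_t\cdot|w|_x}\le 2^{|w|^2/2}$. The remaining polynomial tail satisfies
\[
2B+8A\Delta+4A+1 \;\le\; |w|_t+4|w|_t|w|_x+2|w|_t+1 \;\le\; 3|w|+|w|^2+1 \;\le\; 2|w|^2+3|w|+1,
\]
and summing yields $|\delta|\le 2^{|w|^2/2}+2|w|^2+3|w|+1$ as claimed. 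The main obstacle, and the source of the sharp exponent $|w|^2/2$, is precisely this AM--GM balancing: the crude bounds $A,B,\Delta\le|w|$ alone would yield only $P^\Delta\le 2^{2|w|^2}$, and the essential structural observation is that $P_{f_t}$ is governed by $|w|_t$ while $\Delta$ is governed by $|w|_x$, two quantities whose sum is constrained by $|w|$.
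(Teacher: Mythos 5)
Your proposal is correct and follows the same architecture as the paper: reduce via Proposition~\ref{prop:main-equivalence} to a divisibility instance $(\den(w),\num(w))$, invoke the witness bound of Corollary~\ref{co:div-bound}, and control the parameters $A,B,\Delta,P_{f_i}$ using the fact that the trace of $w$ returns to the origin. The one place you diverge is in how the witness bound is estimated. The paper argues directly that both the $t$-span and the $x$-span of $N_w,D_w$ are at most $\tfrac{1}{2}|w|$ (a walk that returns to the origin travels each distance both out and back), so $\Delta\le \tfrac{1}{2}|w|$, $\deg f_i-\ord f_i\le \tfrac{1}{2}|w|$, and $P_{f_i}^\Delta\le 4^{(|w|/2)^2}=2^{|w|^2/2}$ with no further work. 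You instead split $|w|=|w|_a+|w|_t+|w|_x$, bound the $t$- and $x$-spans by $|w|_t/2$ and $|w|_x/2$ respectively, use the slightly looser $\Delta\le|w|_x$ and $P_{f_t}\le4^{|w|_t}$, and apply AM--GM to $|w|_t+|w|_x\le|w|$ to recover $4^{|w|_t|w|_x}\le2^{|w|^2/2}$. Both routes land on the same constant, so your proof is fine, but your concluding claim that AM--GM is ``the main obstacle'' misattributes the source of the exponent: the return-to-origin span bound $\le|w|/2$ already yields $2^{|w|^2/2}$ without any balancing, which is what the paper exploits. On the other hand, you are more careful than the paper on one point: Corollary~\ref{co:div-bound} requires $\blf$ nontrivial, and the case $\den(w)=0$ (e.g.\ when $w$ contains no $x^{\pm}$) needs a separate word, which you supply via the separation of the $\delta$-blocks of $\num(w)$; the paper's proof of Theorem~\ref{th:delta-bound} silently passes over this case.
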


\begin{proof}
To decide if $w=1$ has a solution the algorithm
computes $\blg=\num(w)$ and $\blf=\den(w)=\sum_{i=s}^t f_i(z)z^{i\delta}$. 
Since $x_w=0$, it follows that $t_w=0$; otherwise the equation $w=1$ has no solution. 
Then the following holds:
\begin{itemize}
\item 
$\deg_\delta(\blf)-\ord_\delta(\blf) \le \tfrac{1}{2}|w|$,
\item 
$\deg_\delta(\blg)-\ord_\delta(\blg) \le \tfrac{1}{2}|w|$,
\item 
$A,B,\Delta \le \tfrac{1}{2}|w|$,
\item 
$\deg(f_i) - \ord(f_i) \le \tfrac{1}{2}|w|$ for every $\delta$-block $f_i$ in $\blf$,
\end{itemize}
because the sets $N_w,D_w\subset \MZ^2$ 
can be enclosed in a $\tfrac{1}{2}|w| \times \tfrac{1}{2}|w|$ 
square; otherwise the trace of $w$ does not return to $(0,0)$.
Hence, by Lemma \ref{le:P-bound},
$P_{f_i} \le 4^{|w|/2} = 2^{|w|}$
for every non-trivial $\delta$-block $f_i$ in $\blf$.

Finally, the equation $w=1$ has a solution if and only if $(\blf,\blg)$
is a positive instance of $\DIV$.
By Corollary \ref{co:div-bound}, the latter is true if and only if
$(\blf,\blg)$ has a witness $\delta$ bounded by
\begin{align*}
|\delta| 
& \le (\max P_{f_i})^\Delta+2(B+4A\Delta)+4A+1 \\
& \le 2^{|w|\Delta}+2(|w|/2+|w|^2)+2|w|+1 \\
& \le 2^{|w|^2/2}+2|w|^2+3|w|+1.
\end{align*}
\end{proof}

We do not provide a time complexity estimate for the algorithm in the next statement,
since it is clearly super-exponential.

\begin{corollary}
There exists an algorithm that, given a one-variable equation $w(a,t,x)=1$
satisfying $\sigma_x(w)=0$, decides whether the equation has a solution or not.
\end{corollary}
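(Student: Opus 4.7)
The plan is to assemble the earlier machinery of the paper into a brute-force search over a bounded range of values of the parameter $\delta$. First I would invoke Lemma~\ref{le:C1-C2} and Proposition~\ref{prop:main-equivalence}: since $\sigma_x(w)=0$, the equation $w=1$ is satisfiable if and only if $\sigma_t(w)=t_w=0$ and the pair of $\delta$-parametric polynomials $(\blf,\blg)=(\den(w),\num(w))$ is a positive instance of $\DIV$. So the algorithm starts by computing $\sigma_t(w)$ in linear time and rejecting immediately when $t_w\ne 0$; otherwise it constructs $\blf$ and $\blg$ using the inductive formulas of Section~\ref{se:num-den} (read $w$ right-to-left, maintaining $N_w$ and $D_w$ as in Section~\ref{se:tracing}).

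Next I would handle the degenerate cases. If $\blf=\blo$, the equation is satisfiable if and only if $\blg=\blo$, which can be tested by inspection. If $\blg=\blo$, every $\delta$ is a witness and we return true. Otherwise both $\blf$ and $\blg$ are nontrivial, and by Lemma~\ref{le:parametric-pwp} they define $(A,1)$- and $(B,1)$-periodic sequences of Laurent polynomials respectively, where $A$ and $B$ are bounded by the geometry of $N_w,D_w\subseteq[-|w|,|w|]^2$ (Lemma~\ref{le:square}). We may freely enlarge $A$ so that $A\le B$.

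The heart of the argument is then the bound of Theorem~\ref{th:delta-bound}: if $(\blf,\blg)$ admits any witness, it admits one with
\[
|\delta|\ \le\ N(w)\ :=\ 2^{|w|^2/2}+2|w|^2+3|w|+1.
\]
So the algorithm iterates $\delta$ from $-N(w)$ to $N(w)$, and for each $\delta$ instantiates the Laurent polynomials $\blf_\delta,\blg_\delta$ and tests divisibility $\blf_\delta\mid \blg_\delta$ using Proposition~\ref{pr:Laurent-complexity}(d). It returns true as soon as a witness is found, and false if the loop ends without success.

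Correctness is then immediate: Proposition~\ref{prop:main-equivalence} reduces solvability to the divisibility problem $\DIV(\blf,\blg)$, and Theorem~\ref{th:delta-bound} guarantees that every positive instance has a witness within the searched range. Termination holds because the loop is finite. The main obstacle in principle is not correctness but complexity: the bound $N(w)$ is super-exponential in $|w|$, and each divisibility test is carried out on Laurent polynomials whose size grows linearly in $|\delta|$ (by the same projection argument as in Corollary~\ref{co:subst-deg-ord}), so the resulting running time is super-exponential. This matches the authors' disclaimer that no complexity estimate is claimed beyond decidability.
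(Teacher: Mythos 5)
Your plan matches the paper's own proof of this corollary: compute $\num(w)$ and $\den(w)$, use Theorem~\ref{th:delta-bound} to bound $|\delta|$, then brute-force the divisibility check over the bounded range. However, your explicit handling of the degenerate case $\blf=\den(w)=\blo$ is wrong. You assert that when $\den(w)=\blo$ the equation is satisfiable if and only if $\blg=\num(w)=\blo$; that is false. When $\den(w)=\blo$, condition (C2) of Lemma~\ref{le:C1-C2} reduces to $\num_\delta(w)=0$, and a nontrivial $\delta$-parametric polynomial $\num(w)$ can still instantiate to the zero Laurent polynomial at particular values of $\delta$. A concrete counterexample is $w=xax^{-1}a$: here $\sigma_x(w)=0$, $\den(w)=z^{\delta}+z^{\delta}=\blo$, and $\num(w)=1+z^{\delta}\ne\blo$, yet $\num_0(w)=1+1=0$, so $x=(0,f)$ is a solution for every $f\in\MZ_2[z^\pm]$ (indeed $w(a,t,1)=a^2=1$ in $L_2$). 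Your algorithm, as written, would reject this satisfiable equation.

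The fix is cheap and keeps you in the paper's framework: when $\den(w)=\blo$, test whether $\num_\delta(w)=0$ for some $\delta$ in a bounded window (once $|\delta|$ exceeds roughly twice the $A$-value of $\num(w)$ the $\delta$-blocks of $\num(w)$ have disjoint supports, so $\num_\delta(w)\ne 0$ for all larger $\delta$); equivalently, retain the single brute-force loop but adopt the convention that $0$ divides $g$ iff $g=0$. The latter is what the paper's proof implicitly does, which is why it does not carve this case out separately.
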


\begin{proof}
For a given equation $w=1$ compute $\num(w)$ and $\den(w)$.
Enumerate the values of $\delta$ in the range 
$$
-(2^{|w|^2/2}+2|w|^2+3|w|+1)\le \delta \le 2^{|w|^2/2}+2|w|^2+3|w|+1.
$$
For each particular value of $\delta$ compute $\num_\delta(w)$ and $\den_\delta(w)$ and
notice that 
\begin{align*}
\deg(\num_\delta) - \ord(\num_\delta) &\le |w|^2/2 + |w|^2/2(2^{|w|^2/2}+2|w|^2+3|w|+1),\\
\deg(\den_\delta) - \ord(\den_\delta) &\le |w|^2/2 + |w|^2/2(2^{|w|^2/2}+2|w|^2+3|w|+1).
\end{align*}
Check if $\den_\delta \mid \num_\delta$ for some $\delta$ and if so, return Yes.
If $\den_\delta \nmid \num_\delta$ for every $\delta$, then return No.
\end{proof}

\bibliography{main_bibliography}

\end{document}